\documentclass[11pt]{amsart}
\usepackage[margin=1in]{geometry}

\usepackage{amssymb,amscd,amsmath,latexsym}
\usepackage[mathcal]{euscript}
\usepackage{hyperref}
\usepackage{xcolor}
\usepackage{ytableau}
\usepackage{tikz}
\usepackage{mfabacus}
\usepackage{mathtools}
\usepackage{bm}
\usepackage{orcidlink}

\usepackage[normalem]{ulem}

\newtheorem{theorem}{Theorem}[section]
\newtheorem{corollary}[theorem]{Corollary}
\newtheorem{lemma}[theorem]{Lemma}
\newtheorem{claim}[theorem]{Claim}
\newtheorem{proposition}[theorem]{Proposition}
\newtheorem{example}[theorem]{Example}
\theoremstyle{definition}
\newtheorem{definition}[theorem]{Definition}
\newtheorem{conjecture}[theorem]{Conjecture}

\numberwithin{equation}{section}
\numberwithin{figure}{section}

\makeatletter
\@namedef{subjclassname@2020}{%
	\textup{2020} Mathematics Subject Classification}
\makeatother

\newcommand{\Ind}{\big\uparrow}
\newcommand{\Res}{\big\downarrow}
\newcommand{\sgn}{\operatorname{sgn}}
\DeclareMathOperator{\emp}{emp}
\DeclareMathOperator{\Add}{Add}
\DeclareMathOperator{\Aone}{A1}
\DeclareMathOperator{\Atwo}{A2}

\theoremstyle{remark}
\newtheorem{remark}[theorem]{Remark}

\title{New columns in decomposition matrices of symmetric groups for every block}

\author[David J. Hemmer]{David J. Hemmer \orcidlink{0000-0002-7411-4495}}
\address{Department of Mathematical Sciences\\
  Michigan Technological University\\
  Houghton, MI 49931}
\email{djhemmer@mtu.edu}

\author[Pavel Turek]{Pavel Turek \orcidlink{0000-0002-6190-0745}}
\address{Representation Theory and Algebraic Combinatorics Unit\\
  Okinawa Institute of
Science and Technology\\
  Onna, Okinawa, Japan 904-0495}
\email{pavel.turek@oist.jp}

\subjclass[2020]{Primary: 20C30, Secondary: 20C20, 05E10, 20C08}
\keywords{Decomposition Numbers, Symmetric groups, Foulkes modules, Brauer morphism}

\begin{document}

\begin{abstract} The central unsolved problem in the modular representation theory of symmetric groups is to find the decomposition matrices, which describe how irreducible representations in characteristic zero decompose upon reduction modulo a prime characteristic $p$. In this paper we determine a large number of new columns in these decomposition matrices, namely those labeled by partitions whose $p$-divisible hooks have all even arm lengths. In particular in odd characteristic $p$, for every possible block of every possible symmetric group $S_n$, we determine at least one complete column. These columns are multiplicity-free and are described by a recently introduced combinatorial statistic of partitions (depending on $p$), called the \textit{odd sequence}. As an application, we determine the indecomposable summands of Foulkes modules $H^{(2^m)}$.

\end{abstract}

\maketitle

\section{Introduction and Main Results}\label{sec:intro} Let $S_n$ denote the symmetric group on $n$ letters. Our notation follows James' classic book \cite{JamesSymmetric78} on the representation theory of symmetric groups. For any partition $\lambda$ of $n$ let $S^\lambda$ denote the corresponding Specht module. These modules are irreducible in characteristic zero, but defined over $\mathbb{Z}$, and hence over any field. The irreducible modules in characteristic $p$ are labeled by $p$-regular partitions $\mu$ and are denoted $D^\mu$. The decomposition number $d_{\lambda\mu}$ records the multiplicity of $D^\mu$ in $S^\lambda$. There has been an enormous amount of work done trying to understand these numbers, commonly organized into matrices called decomposition matrices; see Section~\ref{sec:background} or \cite[Section 1.1]{GiannelliWildonFoulkesandDecomposition15} for a summary.

Let $p$ be an odd prime. Our first main result describes new columns of decomposition matrices in characteristic $p$. It applies to a large number of partitions $\mu$, namely to $\binom{w+(p-3)/2}{w}$ partitions in any fixed block of $p$-weight $w$ (see \cite[Theorem~5.1]{HemmerTurekArms26}); in particular, at least one in every block of every symmetric group! The authors believe that this is the first explicit description of whole columns of decomposition matrices in each block. Moreover, it may be the most elementary formula for decomposition numbers to date.

To state it, we define the \textit{odd sequence} of a partition $\lambda$ to be a sequence $(n_0, n_1, \dots, n_{p-1})$ where $n_i$ is the number of odd parts $\lambda_j$ of $\lambda$ with $\lambda_j - j \equiv i$ (mod $p$). We also refer to hooks of a partition of size divisible by $p$ as \textit{$p$-divisible hooks}. Both decomposition numbers and odd sequences depend on $p$; however, we omit this dependence from the notation, as $p$ is always implicit from the context.

\begin{theorem}\label{theorem:even arms}
    Let $p$ be an odd prime and $\lambda$ and $\mu$ be two partitions of the same size, with $\mu$ $p$-regular. If all arm lengths of the $p$-divisible hooks of $\mu$ are even, then
    \begin{align*}
        d_{\lambda\mu} = \begin{cases}
            1 & \text{if $\lambda$ and $\mu$ have equal $p$-cores and odd sequences;} \\
            0 & \text{otherwise.}
        \end{cases}
    \end{align*}
\end{theorem}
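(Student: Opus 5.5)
The plan is to use Brauer reciprocity: $d_{\lambda\mu}$ equals the multiplicity of $S^\lambda$ in a Specht filtration of the projective cover $P(D^\mu)$. Equivalently, the ordinary character of $P(D^\mu)$ is $\sum_\lambda d_{\lambda\mu}\chi^\lambda$. So it suffices to produce, for each $p$-regular $\mu$ whose $p$-divisible hooks all have even arm length, a projective module whose ordinary character is
\[
\sum_{\lambda} \chi^\lambda ,
\]
where $\lambda$ runs over the partitions with the same $p$-core and odd sequence as $\mu$. One then shows this module is indecomposable with head $D^\mu$.

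\textbf{Step 1 (character identity).} I would start from a module built from the wreath product $S_2\wr S_m$, e.g.\ the Foulkes-type permutation module $H^{(2^m)}=\Ind_{S_2\wr S_m}^{S_{2m}}\mathbf{1}$ and its variants. I would induce these from $S_2\wr S_m\times S_k$ with a suitable (sign-twisted, or $p$-core–projective) factor on the second component. Classically $\Ind_{S_2\wr S_m}^{S_{2m}}\mathbf 1=\sum_{\lambda \text{ even}}\chi^\lambda$. Littlewood--Richardson and Pieri-type rules then show that such inductions have multiplicity-free characters, namely sums of $\chi^\lambda$ with a prescribed set of odd parts. Projecting to the $p$-block of $\mu$ via the abacus should then isolate exactly the $\lambda$ with prescribed $p$-core and odd sequence. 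The bead positions $\lambda_j-j$ modulo $p$ record runners, and odd parts correspond to the odd-sequence statistic.

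\textbf{Step 2 (projectivity).} $S_2\wr S_m$ is not a $p'$-group, so projectivity is not automatic. I would take a suitable summand (a block component, or a summand cut out by an idempotent) and compute the Brauer quotient with respect to each nontrivial $p$-subgroup $Q$. If all Brauer quotients vanish, the summand is projective. Here I would use that $p$ is odd, so $p$-subgroups interact with the $S_2$ factors only through $S_m$ and the fixed points of $Q$ on the coset space are controlled.

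\textbf{Step 3 (indecomposability and identification of the head).} The projective $P$ from Step 2 has a multiplicity-free character. Write $P=\bigoplus P(D^\nu)^{a_\nu}$. Each $P(D^\nu)$ contains $\chi^\nu$ and otherwise only $\chi^\lambda$ with $\lambda\rhd\nu$, by unitriangularity. Let $\nu$ be the dominance-least $p$-regular partition in the character, which the combinatorics of HemmerTurekArms should show is $\mu$ exactly when the even-arm condition holds. I would then show that every other constituent $\chi^\lambda$ must already lie in $P(D^\mu)$. A cheap way is to count: compare $\dim P$ with a lower bound for $\dim P(D^\mu)$, or use the Cartan-matrix/weight argument that a second projective summand would force a second minimal $p$-regular label, contradicting multiplicity-freeness. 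Then $P\cong P(D^\mu)$, and reading off its character gives the theorem, including $d_{\lambda\mu}=0$ outside the described set.

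\textbf{Main obstacle.} I expect the hard part to be Step 3: proving that no other $P(D^\nu)$ splits off. Equivalently, I must show that the even-arm hypothesis on $\mu$ is exactly what makes $\mu$ the unique minimal $p$-regular label. I would first try an induction on $p$-weight: remove a $p$-hook of even arm length (a move preserving the odd sequence) and relate the corresponding projectives via Jantzen--Schaper or branching functors. The Brauer-morphism computation in Step 2 is the second likely sticking point.
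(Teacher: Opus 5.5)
Your architecture matches the paper's: a block component of a sign-twisted induced Foulkes module, projectivity via Brauer quotients, then reading off $P(D^\mu)$ by Brauer reciprocity. However, the step that actually determines the column is missing, and Step 3 as written fails.

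First, the triangularity is reversed. $d_{\lambda\nu}\neq 0$ forces $\lambda\unlhd\nu$, so $P(D^\nu)$ involves $\chi^\nu$ and otherwise only $\chi^\lambda$ with $\lambda\lhd\nu$. The correct argument uses the dominance-\emph{maximal} element $\mu$ of $\mathcal{E}_\theta(\gamma)$. Then $\chi^\mu$ lies in some $P(D^\nu)$ with $\nu\unrhd\mu$, and $\chi^\nu$ itself occurs in the character, so $\nu=\mu$.

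That only shows $P(D^\mu)$ is a summand and that the $\mu$-column is multiplicity-free with support in $\mathcal{E}_\theta(\gamma)$. Neither counting nor multiplicity-freeness rules out further summands. The projective block component of $H^{(2^7;4)}$ for $p=7$, $\gamma=(3,1)$ (Example~\ref{example:GWambiguityresolvedbyourresult}) has a multiplicity-free character, yet it is a sum of three projective indecomposables with disjoint supports. So ``a second label contradicts multiplicity-freeness'' is false.

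The paper supplies the missing ingredient with the Jantzen--Schaper formula specialised to a multiplicity-free column, run by downward induction on dominance. The signed sum over the hook triples $J_\mu(\mathcal{B})$ is evaluated by a sign-reversing involution built from the mutually inverse algorithms A1 and A2. These biject odd-arm and even-arm $p$-divisible hook triples of partitions in $\mathcal{E}_\theta(\gamma)$, which needs $\theta$ to contain a zero. The outcome is $n_{\lambda\mu}>0$ exactly for $\lambda\in\mathcal{E}_\theta(\gamma)$.

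Your fallback rests on a false premise. Removing a rim $p$-hook $h$ of even arm length does \emph{not} preserve the odd sequence. The number of vacant positions below the moved bead drops by $1+a(h)$, which is odd, and every leg bead also flips parity. For $p=5$, removing the arm-$2$ hook at $(2,3)$ from $(6,5^2,3,2,1)$ gives $(6,4,2^3,1)$, and the odd sequence changes from $(1,0,1,1,1)$ to $(1,0,0,0,0)$.

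Steps 1 and 2 also lack their key mechanisms. Projecting $H^{(2^m;k)}$ onto the block of $\mu$ fixes only the core, the weight and the number of odd parts, which mixes several odd sequences (again the example above). You need two block projections, $\bigl((H^{(2^m)}_{C(\gamma)+\bm{w}-\theta}\boxtimes\sgn)\Ind^{S_{2m+k}}\bigr)_{C(\gamma)+\bm{w}}$. This works because, by Pieri's rule, each odd row receives exactly one new node, of residue $\lambda_j-j$. Hence the odd sequence is the difference of the two $p$-contents.

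In Step 2, the vanishing of the Brauer quotients is not a generic consequence of $p$ being odd; it uses both hypotheses. Via a Mackey formula for the Brauer morphism and the compatibility of Brauer quotients with blocks, $V(R_r)\neq 0$ produces a partition with core $\gamma$, weight $w-r$ and odd sequence $\theta+2\bm{t}-\bm{r}$ for some $0\le t\le r/2$. A zero entry of $\theta$ forces $r=2t$, and then minimality of $w=w_\theta(\gamma)$ is contradicted.

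This is where the even-arm hypothesis really enters, through Proposition~\ref{pr:equivalence max}. All $p$-divisible arms of $\mu$ are even iff $\mu$ is the maximum of $\mathcal{E}_\theta(\gamma)$, so in particular $\mu$ has minimal weight for its core and odd sequence. Moreover, $\mu$ is $p$-regular iff $\theta$ contains a zero. The hypothesis does not enter by making $\mu$ a ``minimal $p$-regular label''.
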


\begin{remark}\label{re:p=2}
    As stated, Theorem~\ref{theorem:even arms} is true even if $p=2$, but is rather boring as it only applies to $2$-core partitions $\mu$.
\end{remark}

Thus, once $\mu$ satisfies the even-arm condition, the entire column labeled by $\mu$ is controlled only by two elementary combinatorial statistics: the $p$-core and the odd sequence.

While our focus is on symmetric groups, Theorem~\ref{theorem:even arms} also holds for decomposition numbers of Hecke algebras of quantum characteristic $p$ over a field of characteristic $0$; see Corollary~\ref{cor:Hecke}. This easily follows from some of our arguments and the properties of adjustment matrices.

Theorem~\ref{theorem:even arms} can be restated using \cite[Proposition~1.6]{TurekMullineuxArxiv25}, which gives an equivalent description of partitions with all arm lengths of its $p$-divisible hooks even (referred to as $2$-balanced and $2$-shift skewed partitions in \cite{TurekMullineuxArxiv25}). We refer the reader to Proposition~\ref{pr:equivalence max} and its preceding and subsequent comments for a detailed explanation. Before presenting the restated version, we motivate and connect it to the work of Giannelli and Wildon \cite{GiannelliWildonFoulkesandDecomposition15}.

In that paper the authors gave the following definitions. Let $p$ be an odd prime, $\gamma$ a $p$-core, and $k \geq 0$ and integer. Let $w_k(\gamma)$ be the minimum number of rim $p$-hooks that may be added to $\gamma$ to obtain a partition with exactly $k$ odd parts. Let $\mathcal{E}_k(\gamma)$ denote all partitions with $p$-core $\gamma$, $p$-weight $w_k(\gamma)$ and exactly $k$ odd parts. They proved:

\begin{theorem}\cite[Theorem 1.1]{GiannelliWildonFoulkesandDecomposition15}
\label{theorem:GWThm1.1} Let $p, \gamma, k$ be as above. If $k \geq p$ suppose further that $w_{k-p}(\gamma) \neq w_k(\gamma)-1.$ Then $\mathcal{E}_k(\gamma)$ is a disjoint union of subsets $\mathcal{X}_1,\mathcal{X}_2, \ldots, \mathcal{X}_c$ so that each $\mathcal{X}_i$ has a unique maximal partition $\nu_i$ in the dominance order. Each $\nu_i$ is $p$-regular and the column of the symmetric group decomposition matrix in characteristic $p$ labeled by $\nu_i$ has ones in the rows labeled by partitions in $\mathcal{X}_i$ and zeros in all other rows.
\end{theorem}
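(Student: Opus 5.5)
The plan is to realise the rows of $\mathcal{E}_k(\gamma)$ as the ordinary character of a \emph{projective} module, and then read off the columns from the indecomposable summands.

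\textbf{The module.} Write $n=2m+k$ and let $H=(S_2\wr S_m)\times S_k\le S_n$. I would consider the $p$-permutation module
\[ M=\big(\mathbf{1}_{S_2\wr S_m}\boxtimes \mathbf{1}_{S_k}\big)\Ind^{S_n}_{H}. \]
The Foulkes character of $\mathbf{1}\Ind^{S_{2m}}_{S_2\wr S_m}$ is the multiplicity-free sum of $\chi^\alpha$ over partitions $\alpha$ with all parts even. Applying Pieri's rule for inducing with a trivial $S_k$ factor, the character of $M$ is the sum of $\chi^\lambda$ over partitions $\lambda$ with at most $k$ odd parts. I would refine this to exactly $k$ odd parts, either by taking an appropriate difference or by noting that the minimal weight forces exactly $k$ odd parts. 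Each such $\lambda$ occurs with multiplicity one, since $\lambda$ determines $\alpha$ by rounding its parts down to even.

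Let $B$ be the block with core $\gamma$ and weight $w=w_k(\gamma)$, and let $M_B$ be the corresponding block component of $M$. By minimality of $w_k(\gamma)$, the ordinary character of $M_B$ is exactly $\sum_{\lambda\in\mathcal{E}_k(\gamma)}\chi^\lambda$, once the refinement to exactly $k$ odd parts is settled.

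\textbf{Projectivity.} This is the key step. I would show that $M_B$ is projective by checking that its Brauer quotient $M_B(Q)$ vanishes for every subgroup $Q$ of order $p$. It suffices to use $Q=\langle g\rangle$ with $g$ a product of $p$-cycles, and by the standard Brauer-construction technique (Broué's correspondence) reduce to $Q$ generated by a single $p$-cycle. Then $M(Q)$ is built from $H$-cosets fixed by $Q$, and it is a module for $N_{S_n}(Q)/Q\cong (C_p\rtimes C_{p-1})\times S_{n-p}$. The $Q$-fixed points force the $p$-cycle to act either inside the $S_2\wr S_m$ part, permuting $p$ pairs, or inside $S_k$. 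Correspondingly, $M(Q)$ is built from the analogous module for $S_{n-p}$ with parameters $(m-p,k)$ or $(m,k-p)$.

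The Brauer quotient of the block component $B$ lands in blocks of $S_{n-p}$ with core $\gamma$ and weight $w-1$. A nonzero contribution would therefore need a partition of weight $w-1$ with core $\gamma$ and either $k$ odd parts, contradicting minimality of $w_k(\gamma)$, or $k-p$ odd parts, which is exactly excluded by the hypothesis $w_{k-p}(\gamma)\ne w_k(\gamma)-1$. A $p$-permutation module with zero Brauer quotient at every nontrivial $p$-subgroup is projective, so $M_B$ is projective.

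\textbf{From the projective to the columns.} Write $M_B=\bigoplus_i P(\nu_i)$ as a sum of projective indecomposables. Each $P(\nu)$ has character $\sum_\lambda d_{\lambda\nu}\chi^\lambda$, with $d_{\nu\nu}=1$ and $d_{\lambda\nu}\ne0$ only for $\lambda\unrhd\nu$; this is the transposed form of James' triangularity (with $\nu$ $p$-regular).

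Since the character of $M_B$ is multiplicity-free, the supports $\mathcal{X}_i$ of the characters of the $P(\nu_i)$ partition $\mathcal{E}_k(\gamma)$, and every entry in these columns is $0$ or $1$. I would take the unique maximal element of $\mathcal{X}_i$ with respect to the appropriate (transposed or labelled) dominance convention to be $\nu_i$; it is $p$-regular because it labels a projective indecomposable.

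\textbf{Expected obstacle.} The main difficulty is the Brauer-quotient computation: correctly identifying $M(Q)$ for a $p$-cycle, including the twist by the $C_{p-1}$ action and any sign characters, and tracking which blocks of $S_{n-p}$ it meets. That is where the precise form of the hypothesis $w_{k-p}(\gamma)\ne w_k(\gamma)-1$ enters.
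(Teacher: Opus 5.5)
Your overall plan is the right one: realise $\sum_{\lambda\in\mathcal{E}_k(\gamma)}\chi^\lambda$ as the character of a projective block component of a $p$-permutation module, then read the columns off its projective indecomposable summands. This is Giannelli--Wildon's strategy, and the paper runs the same scheme for its finer summands. But two steps fail as written.

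First, the module is wrong. The $S_k$ factor must carry $\sgn$, not the trivial module; that is, you need the twisted Foulkes module $H^{(2^m;k)}$. Inducing with $\sgn_{S_k}$ adds a vertical strip (one node in each of $k$ distinct rows) to an even partition. That is what makes every partition with \emph{exactly} $k$ odd parts appear exactly once, and your ``rounding down'' argument is really the vertical-strip argument. With $\mathbf{1}_{S_k}$ you add a horizontal strip instead, and the character is neither multiplicity-free nor governed by odd parts. For example, $\chi^{(4,2)}$ occurs twice in $(\mathbf{1}_{S_2\wr S_2}\boxtimes\mathbf{1}_{S_2})\Ind^{S_6}$, once from $\alpha=(4)$ and once from $\alpha=(2,2)$. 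Worse, for $\gamma=(1^2)$ and $k=2$ (so $m=0$), your $M$ is $\mathbf{1}_{S_2}$ with character $\chi^{(2)}$. Its component in the block of $(1^2)$ is zero, although $\mathcal{E}_2((1^2))=\{(1^2)\}$. No later ``refinement'' can repair this.

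Second, projectivity cannot be tested on single $p$-cycles. A nontrivial $p$-subgroup of $S_n$ contains a conjugate of some $R_r$, a product of $r$ disjoint $p$-cycles with $1\le r\le n/p$. Vanishing of the Brauer quotient at $R_1$ only tells you that $R_1$ lies in no vertex, and there is no reduction from $R_r$ to $R_1$. Foulkes modules show this matters. A $p$-element of $S_2\wr S_m$ permutes the pairs, so its $p$-cycles come in pairs and $H^{(2^m)}(R_1)=0$ for every $m$. Yet $H^{(2^p)}$ is not projective, since $\dim H^{(2^p)}=(2p-1)!!$ has $p$-part $p<p^2=|S_{2p}|_p$. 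For the same reason your ``$p$-cycle permuting $p$ pairs'' branch is incoherent: it needs $r=2$ and lands in $S_{n-2p}$ at weight $w-2$.

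You must treat every $r$. A conjugate of $R_r$ inside $S_{2m}\times S_k$ has $2t$ of its cycles in the first factor (an odd number contributes zero) and $s=r-2t$ in the second. The corresponding piece of the Brauer quotient of the block component involves $(H^{(2^{m-tp})}\boxtimes\sgn)\Ind^{S_{n-rp}}$ projected to the block with core $\gamma$ and weight $w-r$. Its character runs over partitions with exactly $k-sp$ odd parts.
\begin{itemize}
\item If $s=0$, then $r\ge2$, and a nonzero piece contradicts the minimality of $w=w_k(\gamma)$.
\item If $s\ge1$ (so $k\ge p$ and the hypothesis is in force), you need a combinatorial input missing from your sketch. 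Appending $(1^p)$ to a partition adds a rim $p$-hook and $p$ odd parts, so $w_j(\gamma)\le w_{j-p}(\gamma)+1$. Nonvanishing would give $w_{k-sp}(\gamma)\le w-r$, hence $w_k(\gamma)\le w-r+s$. This forces $t=0$ and $w_{k-sp}(\gamma)=w_k(\gamma)-s$. Appending $(1^{(s-1)p})$ then gives $w_{k-p}(\gamma)\le w_k(\gamma)-1$, so $w_{k-p}(\gamma)=w_k(\gamma)-1$, contradicting the hypothesis.
\end{itemize}
This is how the single hypothesis controls all $s$, not just $s=1$.

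Finally, in the paper's convention $d_{\lambda\nu}\neq0$ forces $\nu\unrhd\lambda$. So $\nu_i$ is automatically the unique maximal element of $\mathcal{X}_i$; the triangularity you quote is reversed.
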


\begin{remark}
\label{remark:GianelliWildononlygivesunionofcolumns}
Theorem \ref{theorem:GWThm1.1} does not in general give individual columns of the decomposition matrix, but instead gives the entries in a sum of some columns. In their Proposition 6.4, they prove that for each integer $w\geq 0$ there exists some block of $p$-weight $w$ where $c=1$, i.e. their result gives an actual column.
\end{remark}
We refine the number of odd parts statistic by tracking also the $p$-residues of $\lambda_j-j$ for each odd part $\lambda_j$, through the odd sequence and a corresponding set of partitions $\mathcal{E}_\theta(\gamma)$ (defined properly in Section \ref{sec:abacus}). It turns out this statistic completely describes the sets $\mathcal{X}_i$ in Theorem \ref{theorem:GWThm1.1}, resolving its inexplicitness mentioned in Remark \ref{remark:GianelliWildononlygivesunionofcolumns} (see Lemma~\ref{le:GWCorollary} and Remark~\ref{remark:GWSets}). It also gives an enormous number of other new columns in decomposition matrices. This is what Theorem~\ref{theorem:even arms} does, though its connection to the work of Giannelli--Wildon is better spotted from its promised reformulation.

\begin{theorem}\label{theorem:maintheorem}
  Let $p$ be an odd prime. Let $\theta=(\theta_0,\theta_1,\ldots, \theta_{p-1})$ be a composition such that at least one $\theta_i=0$, and let $\gamma$ be a $p$-core partition. There is a unique maximal element in $\mathcal{E}_\theta(\gamma)$ in the dominance order, which is $p$-regular. The corresponding column of the symmetric group decomposition matrix in characteristic $p$ has a one in precisely the rows labeled by partitions in $\mathcal{E}_\theta(\gamma)$ and zeros elsewhere.
\end{theorem}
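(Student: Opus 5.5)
Theorem~\ref{theorem:maintheorem} is a reformulation of Theorem~\ref{theorem:even arms}, so I would prove the two together. The overall strategy follows Giannelli--Wildon: realise the sum of columns as the ordinary character of a projective summand of a module of Foulkes type, and then use the odd sequence to isolate a single column.

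\textbf{Step 1 (combinatorics).} On the abacus, a partition in the block of $\gamma$ with odd sequence $\theta$ has the odd parts on runner $i$ counted by $\theta_i$. The minimal weight needed to achieve $\theta$ is attained exactly when the beads can be arranged with no extra rim $p$-hooks, so $\mathcal{E}_\theta(\gamma)$ is well defined. Using dominance moves (sliding a bead up one position on a runner), I would show that $\mathcal{E}_\theta(\gamma)$ has a unique dominance-maximal element $\mu_\theta$. I would then characterise it: $\mu_\theta$ is $p$-regular, and its $p$-divisible hooks all have even arm length. The hypothesis that some $\theta_i=0$ is what should give $p$-regularity, since an empty runner prevents $p$ equal consecutive parts. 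Conversely, every $p$-regular $\mu$ with even arms should be $\mu_\theta$ for $\theta$ equal to its own odd sequence and $\gamma$ equal to its own core. This is essentially Proposition~\ref{pr:equivalence max} together with \cite[Proposition~1.6]{TurekMullineuxArxiv25}.

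\textbf{Step 2 (a projective character).} I would consider the permutation module $H=\mathrm{Ind}_{S_2\wr S_m\times S_k}^{S_{2m+k}}(\mathrm{triv})$ or a signed variant of it. In characteristic zero its character is $\sum_\lambda\chi^\lambda$, summed over $\lambda$ with exactly $k$ odd parts, all with multiplicity one. The Brauer correspondence with respect to $p$-subgroups of $S_2\wr S_m$ then identifies the projective summands of $H$ lying in the block of $\gamma$ at minimal weight. Their characters form the sum over $\mathcal{E}_k(\gamma)$, which is Giannelli--Wildon's Theorem~\ref{theorem:GWThm1.1}.

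To refine by odd sequence, I would replace the Young-type subgroup by one that records residues. Concretely, I would restrict the Brauer quotient to a product of groups $S_{p}\wr S_{w_i}$ acting on runner $i$. The aim is to show that the projective summands decompose further, with one summand $P_\theta$ for each $\theta$, and that $P_\theta$ has ordinary character $\sum_{\lambda\in\mathcal{E}_\theta(\gamma)}\chi^\lambda$. The assumption $\theta_i=0$ for some $i$ should be exactly what makes the separation by residues clean, mirroring the condition $w_{k-p}(\gamma)\neq w_k(\gamma)-1$ in Theorem~\ref{theorem:GWThm1.1}. Lemma~\ref{le:GWCorollary} and Remark~\ref{remark:GWSets} are the place to invoke this.

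\textbf{Step 3 (indecomposability).} $P_\theta$ is a sum of projective indecomposables $P(D^\nu)$. Each of them has at least one Specht factor, and the characters are multiplicity-free. Brauer reciprocity, together with the triangularity $d_{\lambda\nu}\neq0\Rightarrow\lambda\trianglerighteq\nu$, implies that any $\nu$ occurring is in $\mathcal{E}_\theta(\gamma)$. Since $d_{\nu\nu}=1$, the maximal element $\mu_\theta$ must occur. If a second summand $P(D^\nu)$ with $\nu\neq\mu_\theta$ also occurred, then $\nu$ would be dominated by no element of its own column outside $\mathcal{E}_\theta$. I would rule this out with a counting argument: the number of projective summands across all $\theta$ equals the number of $\theta$ (with some $\theta_i=0$), using Step 1's bijection together with a separate count showing no other $p$-regular partitions appear. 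Alternatively, I would use induction on weight via the Brauer quotient being indecomposable.

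I expect the main obstacle to be Step 2, the residue-refined splitting of the projective summands of the Foulkes-type module. This requires understanding the Brauer quotient at the level of abacus runners, not just the number of odd parts.
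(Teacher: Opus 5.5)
Your Step~1 agrees with the paper, since it simply invokes Proposition~\ref{pr:equivalence max}. Steps~2 and~3, however, each miss the idea that actually makes the proof work.

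\textbf{Step~2.} You try to split Giannelli--Wildon's projective summand in the block of weight $w_k(\gamma)$ by a ``restriction of the Brauer quotient to $S_p\wr S_{w_i}$ acting on runner $i$''. This does not specify any actual subgroup or module. (Also, the module must be induced with $\sgn$ on $S_k$, not the trivial module, to have the character you state.) Refining that summand can only ever produce the $\theta$ with $w_\theta(\gamma)=w_k(\gamma)$ from \eqref{eq:disjoint}. The theorem also covers $\theta$ with $w_\theta(\gamma)>w_k(\gamma)$, e.g.\ $\theta=[2,0,0,0,0]$ in Example~\ref{example:p=5gamma=3211allcolumns}. In those blocks the whole block component of $H^{(2^m;k)}$ is not projective: its Brauer quotient at $R_2$ is nonzero, because partitions with core $\gamma$, weight $w_\theta(\gamma)-2$ and $k$ odd parts exist. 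Invoking Lemma~\ref{le:GWCorollary} would moreover be circular, as its proof uses Theorem~\ref{theorem:maintheorem}.

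The missing idea is to cut by blocks \emph{twice}. The odd sequence of $\lambda$ is $C(\lambda)$ minus the $p$-content of the even partition obtained by deleting the last node of every odd row. So, with $w=w_\theta(\gamma)$, the module $V=\bigl((H^{(2^m)}_{C(\gamma)+\bm{w}-\theta}\boxtimes\sgn)\Ind^{S_{2m+k}}\bigr)_{C(\gamma)+\bm{w}}$ has character $\sum_{\lambda\in\mathcal{E}_\theta(\gamma)}\chi^\lambda$ (Lemma~\ref{le:FoulkesModules}). Projectivity then comes from a Mackey formula for the Brauer morphism (Corollary~\ref{co:twiseted Foulkes}). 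If $V(R_r)\neq 0$, one obtains a partition with core $\gamma$, weight $w-r$ and odd sequence $\theta+2\bm{t}-\bm{r}$. The zero entry of $\theta$ forces $r=2t$, contradicting the minimality of $w_\theta(\gamma)$. That is the role of the zero hypothesis here (it is also used for $p$-regularity and in the algorithm A2), not an analogue of $w_{k-p}(\gamma)\neq w_k(\gamma)-1$.

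\textbf{Step~3.} Your triangularity is reversed: $d_{\lambda\nu}\neq0$ implies $\nu\unrhd\lambda$. With the right direction, your argument that $P^{\mu_\theta}$ is a summand of $V$ is exactly the paper's. But it only shows that the column of $\mu_\theta$ is multiplicity-free with support \emph{contained in} $\mathcal{E}_\theta(\gamma)$ (cf.\ Remark~\ref{re:overkill}). It does not exclude further summands $P^\nu$ with $\nu\in\mathcal{E}_\theta(\gamma)$ $p$-regular and $\nu\neq\mu_\theta$.

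Neither of your suggestions closes this. Knowing how many admissible $\theta$ there are (equivalently, how many even-arm $p$-regular partitions) says nothing about whether a given $V$ has extra summands. And a projective module has zero Brauer quotient at every nontrivial $p$-subgroup, so an induction ``via the Brauer quotient being indecomposable'' carries no information.

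The paper closes the gap with the Jantzen--Schaper formula (Proposition~\ref{prop:decomposition}). Assuming the column is multiplicity-free, it runs a downward induction in the dominance order. It pairs this with a sign-reversing involution on $J_\mu(\mathcal{B})$, built from the mutually inverse bijections $\widetilde{\Aone}\colon T^o_\theta(\gamma)\to T^e_\theta(\gamma)$ and $\widetilde{\Atwo}$ (Lemma~\ref{le:inverse}). This shows $n_{\lambda\mu}>0$ exactly for $\lambda\in\mathcal{E}_\theta(\gamma)$. The authors note that the Littlewood-formula alternative only handles $\theta=\bm{0}$, so this step genuinely requires such a combinatorial input.
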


Several examples of columns identified by Theorem~\ref{theorem:maintheorem} (or, equivalently, Theorem~\ref{theorem:even arms}) are in Section~\ref{subsection:Examples}.

We now fix a field $F$. The \textit{Foulkes module} $H^{(2^m)}$ is the permutation $FS_{2m}$-module on the cosets of the wreath product $S_2\wr S_m\leq S_{2m}$; that is $H^{(2^m)} \cong F\Ind^{S_{2m}}$, where $F$ is considered as the trivial $F(S_2\wr S_m)$-module for the induction. The Foulkes modules and their decompositions are commonly studied due to their connection to the long-standing plethysm problem; see the end of Section~\ref{sec:background}. Here, our final main result describes the indecomposable summands of $H^{(2^m)}$ when $F$ has characteristic $p$. Put concisely, it states that $H^{(2^m)}$ has exactly one indecomposable summand in each possible block. In the statement (and throughout the paper), we write $V_{\tau}$ for the indecomposable summand of $FS_n$-module $V$ lying in the block labeled by $\tau$, a $p$-content of a partition of $n$ (see Theorem~\ref{thm:NakayamaF} and the subsequent comments) and refer to partitions with no odd parts as \textit{even partitions}.  

\begin{theorem}\label{thm:indecomposables}
    Let $p$ be an odd prime and $m\geq 0$ an integer. Let $\tau$ be the $p$-content of an even partition of $2m$. Then over a field of characteristic $p$, $H^{(2^m)}_{\tau}$ is indecomposable. That is,
    \[
    H^{(2^m)} = \bigoplus_{\tau} H^{(2^m)}_{\tau},
    \]
    is the decomposition of the Foulkes module $H^{(2^m)}$ into indecomposable summands (where the sum runs over all distinct $p$-contents $\tau$ of even partitions of $2m$).
\end{theorem}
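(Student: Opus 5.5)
We prove Theorem~\ref{thm:indecomposables} by combining Theorem~\ref{theorem:maintheorem} with the known structure of $H^{(2^m)}$ in characteristic zero and with the theory of Young/trivial source modules. In characteristic zero, $H^{(2^m)}\otimes\mathbb{Q}\cong\bigoplus_{\nu}S^{\nu}$, where $\nu$ runs over all even partitions of $2m$ and each appears with multiplicity one. Since $H^{(2^m)}$ is a permutation module, it is a $p$-permutation (trivial source) module, so each summand $H^{(2^m)}_\tau$ lifts to characteristic zero. Its ordinary character is therefore the sum of the $\chi^\nu$ over the even partitions $\nu$ with $p$-content $\tau$. Our task is to show that this lifted block component cannot split.

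For a fixed even partition $\nu$, its odd sequence is $\theta=(0,\dots,0)$, so some $\theta_i=0$. Also, for a fixed $p$-content $\tau$, the even partitions of $2m$ with that content have a common $p$-core $\gamma$ and a common weight. We first check that these partitions form exactly $\mathcal{E}_{\mathbf 0}(\gamma)$. This requires the minimality of the weight in the definition of $\mathcal{E}_\theta(\gamma)$. The argument should be a short counting check: every partition with odd sequence $\mathbf 0$ and $p$-core $\gamma$ whose size is $2m$ has the same weight, and the minimal such weight is attained. Granting this, Theorem~\ref{theorem:maintheorem} gives a $p$-regular partition $\mu$, namely the dominance-maximal element of this set, whose decomposition-matrix column has ones in exactly the rows of the even partitions of content $\tau$. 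Consequently the projective indecomposable $P(D^\mu)$ has ordinary character $\sum\chi^\nu$, the sum running over exactly these $\nu$. This is precisely the character of $H^{(2^m)}_\tau$.

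We then identify $H^{(2^m)}_\tau$ using the summands of the permutation module. Every indecomposable summand of $H^{(2^m)}$ is a trivial source module with vertex a $p$-subgroup of $S_2\wr S_m$. Suppose $H^{(2^m)}_\tau=U\oplus U'$ with both summands nonzero. Each of them lifts, with a nonzero ordinary character. Each character is a sub-sum of $\sum\chi^\nu$, so each is multiplicity-free and supported on even partitions in the block. The main step is to rule out such a splitting. The plan is to use the Brauer morphism and induction on $m$. Take a summand $U$ with vertex $Q$. Its Brauer quotient is a projective summand of $H^{(2^m)}(Q)$, which is a Foulkes-type permutation module for a smaller group of the form $N(Q)/Q$. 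The component of $U$ containing $D^\mu$ has the partition $\mu$ (or a suitable row of its column) in its character. Because the column of $\mu$ consists precisely of all even partitions in the block, we aim to show that any summand containing $\chi^\mu$ already contains every $\chi^\nu$. The alternative route is a dimension/Hom argument. Compute $\dim\mathrm{Hom}(H^{(2^m)}_\tau, D^\mu)$, or better, $\dim\operatorname{End}$ of the block component reduced by its radical. If the head of $H^{(2^m)}_\tau$ is simple, the module is indecomposable. By Frobenius reciprocity, the head of $F\Ind_{S_2\wr S_m}^{S_{2m}}$ is controlled by the $(S_2\wr S_m)$-fixed points of simple modules. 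We would show that only $D^\mu$ among the simple modules in the block has a nonzero fixed vector, and that its fixed space is one-dimensional, again using that the column of $\mu$ captures all $\chi^\nu$.

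We expect this last step, excluding a second summand (or a second head constituent) in the block, to be the main obstacle. We would try the Brauer-quotient induction first, reducing to the vertex $Q$ being a Sylow subgroup of a Young subgroup $S_{p}\wr S_k$ inside $S_2\wr S_m$ and matching the removal of $2p$-hooks with the passage to $N(Q)/Q$.
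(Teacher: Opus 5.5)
There is a genuine gap, and it is in the step you call a ``short counting check''. All even partitions of $2m$ with $p$-content $\tau$ do share a $p$-core $\gamma$ and a weight $w$. But in general $w\neq w_{\bm{0}}(\gamma)$, so these partitions need not form $\mathcal{E}_{\bm{0}}(\gamma)$.

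For instance, take $p=3$, $m=3$ and $\tau=(2,2,2)$. The even partitions of content $\tau$ are $(6)$ and $(2^3)$, with empty $3$-core and weight $2$. On the other hand $w_{\bm{0}}(\emptyset)=0$, because the empty partition is even. Here $H^{(2^3)}_\tau$ has dimension $1+5=6$, which is not divisible by $9$, the $3$-part of $|S_6|$. So it is not projective, its character is not $\psi^{\mu}$ for any $\mu$, and Theorem~\ref{theorem:maintheorem} says nothing about this block.

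In general, parity gives $w=w_{\bm{0}}(\gamma)+2t$ for some $t\geq 0$, and Theorem~\ref{theorem:maintheorem} with $\theta=\bm{0}$ only covers the blocks with $t=0$. In those blocks the splitting issue you flag as the main obstacle is immediate. Corollary~\ref{cor:projective indecomposable} gives $H^{(2^m)}_\tau\cong P^{\mu}$ directly. Alternatively, from the character alone: a $p$-permutation module whose character vanishes on $p$-singular elements has zero Brauer quotient at every nontrivial cyclic $p$-subgroup, hence is projective by Theorem~\ref{th:Brauer}.

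The real content of the theorem lies in the blocks with $t>0$, where $H^{(2^m)}_\tau$ is non-projective. There your proposal offers only the undeveloped Brauer-quotient and head sketches. The head version again presupposes a column of $\mu$ consisting of exactly the even partitions of the block, and no such column exists.

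The paper closes this with the following ingredients, none of which your plan contains.

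First, it classifies the projective summands of $H^{(2^m)}$ (Proposition~\ref{pr:projective summands}). If $P^{\mu}$ is a summand, then every $\lambda$ with $d_{\lambda\mu}>0$ is even. Littlewood's formula (Theorem~\ref{thm:hook decomposition}, via Lemma~\ref{le:even mu}) then forces all arm lengths of $p$-divisible hooks of $\mu$ to be even. So $\mu$ is maximal in $\mathcal{E}_{\bm{0}}(\gamma)$ and the block has $t=0$. Without this you cannot rule out a projective summand sitting beside non-projective ones when $t>0$.

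Second, it uses the Giannelli--Wildon description of a non-projective summand $V$ (Theorem~\ref{th:Foulkes vertices}). A vertex of $V$ is a Sylow $p$-subgroup of $S_2\wr S_{tp}$, and $V(R_{2t})\cong H^{(2^{tp})}(R_{2t})\boxtimes W$ with $W$ a projective indecomposable summand of $H^{(2^{m-tp})}$.

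Third, applying the classification to $W$ shows that $W=H^{(2^{m-tp})}_{\tau-2\bm{t}}$ and that $t$ is determined by $\tau$ alone. Corollary~\ref{co:FoulkesSummands} then gives $V(R_{2t})\cong H^{(2^m)}_\tau(R_{2t})$ for a single summand $V$. Every summand $U$ of $H^{(2^m)}_\tau$ has the same $t$, so $U(R_{2t})\neq 0$ by Theorem~\ref{th:Brauer}. Additivity of the Brauer quotient then leaves room for only one summand.

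Your idea of a Brauer-quotient reduction to a smaller Foulkes module is in the right spirit. Without these pieces, especially the Littlewood-formula step, it does not close.
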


\subsection{Outline of Proof}
\label{sec:outlineproof}
Key objects studied in \cite{GiannelliWildonFoulkesandDecomposition15} and used here are the twisted Foulkes modules: given integers $m,k\geq 0$, the \textit{twisted Foulkes module} $H^{(2^m;k)}$ is defined as

$$H^{(2^m;k)}=(H^{(2^m)} \boxtimes \sgn_{S_k})\Ind^{S_{2m+k}}_{S_{2m}\times S_k}.$$
It is known that the ordinary character of $H^{(2^m;k)}$ decomposes as the sum of the ordinary characters of Specht modules labeled by $\lambda \vdash 2m+k$ with precisely $k$ odd parts. Through a careful study of the vertices and Green correspondents of the summands of these modules (in characteristic $p$), Giannelli and Wildon are able to find a projective summand in an appropriate block. Using the crucial property that the multiplicity of a Specht module $S^{\lambda}$ in the projective cover $P^{\mu}$ of $D^{\mu}$ coincides with the decomposition number $d_{\lambda\mu}$ (see \eqref{eq:projective} for exact formulation), they immediately deduce Theorem \ref{theorem:GWThm1.1}. They cannot however determine what the indecomposable summands of this projective summand are, which would produce the partition into the sets $\mathcal{X}_1, \mathcal{X}_2,\dots, \mathcal{X}_c$.

Using two block projections instead of one, we consider a finer direct summand of a twisted Foulkes module with an ordinary character equal to the sum of ordinary characters of $S^{\lambda}$ labeled by partitions $\lambda\in\mathcal{E}_\theta(\gamma)$. We are able to use the combinatorics of the odd sequences, studied in detail in \cite{TurekMullineuxArxiv25}, together with the Jantzen--Schaper formula to ensure that, if the summand we find is projective, then it must be indecomposable. This argument is fully combinatorial and uses a sign-reversing involution. Then a careful study of vertices, using the Brauer morphism, allows us to conclude this summand is indeed projective, and then conclude Theorem~\ref{theorem:maintheorem} (and, its equivalent statement, Theorem~\ref{theorem:even arms}); in fact, not only do we determine the decomposition matrix column labeled by $\mu$ in Theorem~\ref{theorem:even arms}, we also conclude that our direct summand of $H^{(2^m;k)}$ is the projective module $P^{\mu}$, providing an explicit construction of $P^{\mu}$.

This final step is analogous to the arguments used by Giannelli and Wildon, although due to the nature of our finer summands, we work in a more abstract setting. In particular, we prove and use the following `Mackey's theorem for the Brauer morphism', which, despite its simplicity and applicability, the authors were not able to find in the literature. We expect this formula to help find more projective modules, by following the strategy we take in Section~\ref{sec:Brauer}.

\begin{proposition}\label{pr:Mackey for Brauer}
    Let $F$ be a field of prime characteristic $p$, $K\leq G$ be two finite groups, and $R\leq G$ be a $p$-group. Then for a $p$-permutation $FK$-module $V$, we have
    \[
    V\Ind^G(R) \cong \bigoplus_{\substack{x\in N_G(R)\backslash G/K \\ R\leq \prescript{x}{}{K}}} \left(\prescript{x}{}{V}\right)(R)\Ind^{N_G(R)}.
    \]
\end{proposition}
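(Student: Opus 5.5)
We reduce to permutation modules, since everything in sight is additive. A $p$-permutation $FK$-module is a direct summand of a finite direct sum of permutation modules $F[K/L]$. The Brauer construction $(-)(R)$ commutes with direct sums, as do induction and conjugation. It therefore suffices to prove the isomorphism for $V=F[K/L]$, provided it is natural enough to pass to direct summands. The cleanest route is to work with $p$-permutation modules given together with a $p$-stable basis, i.e.\ a basis $B$ permuted by a Sylow $p$-subgroup. In that setting $V(R)$ is identified with the span of the images of the $R$-fixed basis elements $B^R$, and $N_G(R)$ acts on this span.

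For a general summand we use the standard fact that $V(R)$ is a functor, and that the natural map
\[
\bigoplus_{x} \left(\prescript{x}{}{V}\right)(R)\Ind^{N_G(R)} \to V\Ind^G(R)
\]
is defined for all $V$ (not only permutation modules) by restricting Brauer quotients along inclusions of fixed points. Once this map is an isomorphism on permutation modules, it is an isomorphism on their summands, by additivity and naturality.

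The key steps are as follows.

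\textbf{Step 1.} Write $V\Ind^G=\bigoplus_{g\in G/K} g\otimes V$ with the basis $\{g\otimes b\}$. An element $r\in R$ sends $g\otimes b$ to $rg\otimes b$ if $rg$ lies in the coset $gK$. More precisely, $r(g\otimes b)=g'\otimes kb$ where $rg=g'k$. Hence $g\otimes b$ can be fixed by $R$ only if $R$ fixes the coset $gK$, i.e.\ $R\le \prescript{g}{}{K}$.

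\textbf{Step 2.} Recall that $M(R)=M^R/\sum_{Q<R}\operatorname{tr}_Q^R M^Q$. For a permutation module this quotient is spanned by the $R$-fixed basis elements. For a module that is only induced from a summand of a permutation module, we argue via orbits: decompose $G/K$ into $R$-orbits. For an orbit of size greater than $1$, the contribution to $M^R$ is the image of a relative trace $\operatorname{tr}_Q^R$ with $Q<R$ the stabiliser, so it dies in $M(R)$. Consequently
\[
V\Ind^G(R)\cong\bigoplus_{gK:\,R\le \prescript{g}{}{K}} (g\otimes V)(R),
\]
and $g\otimes V$, viewed as an $\prescript{g}{}{K}$-module, is $\prescript{g}{}{V}$, so its Brauer quotient is $(\prescript{g}{}{V})(R)$. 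This orbit argument is the main obstacle. We need $(g\otimes V)^R$ to be computed inside $\operatorname{Res}_R(\prescript{g}{}{V})$, and we need that the traces from proper subgroups of $R$ restricted to this summand are exactly $\sum_{Q<R}\operatorname{tr}_Q^R\big((\prescript{g}{}{V})^Q\big)$. Both hold because the decomposition of $\operatorname{Res}_R(V\Ind^G)$ via Mackey's formula into $\bigoplus_{RgK}$ summands is $R$-stable, and the Brauer construction is additive over this decomposition. So we first apply the ordinary Mackey formula to $\operatorname{Res}_R V\Ind^G$, then observe that a double coset $RgK$ with $R\not\le \prescript{g}{}{K}$ contributes a module induced from a proper subgroup $R\cap\prescript{g}{}{K}<R$. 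Such a module has zero Brauer quotient at $R$, since all of its fixed points are traces from that proper subgroup.

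\textbf{Step 3.} Regroup the surviving cosets $gK$ (those with $R\le \prescript{g}{}{K}$) under the action of $N_G(R)$ by left multiplication; this action preserves the condition. An orbit is $N_G(R)xK/K\cong N_G(R)/(N_G(R)\cap \prescript{x}{}{K})$. The sum over one orbit is then $\big((\prescript{x}{}{V})(R)\big)\Ind_{N_{\prescript{x}{}{K}}(R)}^{N_G(R)}$, because $N_G(R)$ permutes the summands transitively and the stabiliser of the summand $x\otimes V$ is $N_G(R)\cap \prescript{x}{}{K}=N_{\prescript{x}{}{K}}(R)$, acting on it via its action on $(\prescript{x}{}{V})(R)$. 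This is the standard characterization of induced modules. Summing over representatives $x\in N_G(R)\backslash G/K$ with $R\le\prescript{x}{}{K}$ gives the formula, where the induction is read as being from $N_{\prescript{x}{}{K}}(R)$.
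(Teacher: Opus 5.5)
Your argument is correct, but it is organised differently from the paper's proof. The paper first uses $\bigl(V\Ind^G\Res_{N_G(R)}\bigr)(R)=V\Ind^G(R)$ and applies the ordinary Mackey formula directly at the level of $N_G(R)$, with double cosets $N_G(R)\backslash G/K$. It then treats each summand $\bigl((\prescript{x}{}{V})\Res_{\prescript{x}{}{K}\cap N_G(R)}\bigr)\Ind^{N_G(R)}$ with a separate lemma: for $R\unlhd H$ and $L\leq H$, the Brauer construction of a module induced from $L$ to $H$ is $U(R)\Ind_L^H$ if $R\leq L$ and $0$ otherwise. That lemma is proved by writing down an explicit $Q$-invariant basis of an induced module and reading off its $R$-fixed elements. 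You instead apply Mackey only down to $R$, which amounts to splitting $G/K$ into $R$-orbits. You kill the non-trivial orbits because a module induced from a proper subgroup of $R$ has all its $R$-fixed points equal to relative traces. You then rebuild the $N_G(R)$-structure by hand: $N_G(R)$ permutes the surviving summands $\overline{(g\otimes V)^R}$ of $V\Ind^G(R)$, the stabiliser of the one indexed by $xK$ is $N_{\prescript{x}{}{K}}(R)$, and the imprimitivity characterisation of induced modules finishes the job. So the paper gets the $N_G(R)$-bookkeeping for free from Mackey at $N_G(R)$ and pays with the invariant-basis lemma, while you pay with the explicit regrouping in Step 3. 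Your route has a real advantage: it uses the trace-quotient definition $M(R)=M^R/\sum_{Q<R}\operatorname{tr}_Q^R M^Q$ and never uses the $p$-permutation hypothesis, so it proves the formula for arbitrary $FK$-modules. Consequently, your opening reduction to permutation modules $F[K/L]$ is never used and can be deleted, along with the loosely specified ``natural map'' and the basis-based Step 1. To match the paper's setting, you should state one fact explicitly. The paper defines $V(R)$ as the span of the $R$-fixed elements of a $Q$-invariant basis. You need the standard fact (Brou\'e) that for $p$-permutation modules this agrees, as an $FN_G(R)$-module, with the trace quotient you compute with.
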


Restricting attention to Foulkes modules $H^{(2^m)}$ and moving to Theorem~\ref{thm:indecomposables}, we use a formula due to Littlewood to show that the projective indecomposable summands we found are all the projective indecomposable summands of $H^{(2^m)}$. Using the description of the vertices of all the indecomposable summands of $H^{(2^m)}$ established by Giannelli--Wildon \cite[Theorem~1.2]{GiannelliWildonIndecomposable16}, we then easily deduce Theorem~\ref{thm:indecomposables}, describing all the indecomposable summands explicitly. In Conjecture~\ref{con:indecomposables}, we propose a generalization of this result to all twisted Foulkes modules $H^{(2^m; k)}$, namely, we conjecture that the indecomposable summands of $H^{(2^m; k)}$ are indexed by pairs of blocks, or equivalently by a pair of an $e$-core and an odd sequence. We expect the proof of Theorem~\ref{thm:indecomposables} to generalize; however, this may require a new combinatorial idea while using Littlewood's formula.

\subsection{Literature background}
\label{sec:background}
Here, we collect some known results about decomposition numbers, balanced partitions and Foulkes modules -- three topics our results concern.

\subsubsection*{Decomposition numbers}
As mentioned, \cite{GiannelliWildonFoulkesandDecomposition15} provides a well-written summary of the problem of computing decomposition numbers of symmetric groups. Many of the tools that can be used to compute some decomposition numbers are summarized in Mathas \cite[Section~6.4]{MathasHecke99} and Fayers \cite[Section~1]{FayersWeightThree08}. (In the former, their Specht modules $S^{\lambda}$ are the dual of Specht modules $S^{\lambda'}$, and their irreducible modules $D^{\mu}$ are our modules $D^{\mu'}$.) These tools apply more generally to decomposition numbers of Hecke algebras. Here we focus mainly on summarizing concrete known formulas for decomposition numbers, rather than on the various algorithms and tools that can be used to compute some unspecified decomposition numbers, in principle.

The decomposition numbers in blocks of $p$-weight $0$ and $1$ are well-understood (see, for example, \cite[Section~2]{FayersWeightThree08}). For $p$-weight $2$, when $p$ is odd, a combinatorial formula was found by Richards \cite[Theorem~4.4]{RichardsDecomposition96}. While Richards' formula is explicit, there is already an increase in complexity from the $p$-weight $1$ case, as it requires the computation of the Mullineux map. For $p=2$, the formulas for $p$-weight $2$ were found by Fayers \cite{FayersWeightTwoPTwo05}. Moving to $p$-weight $3$, there is no explicit combinatorial formula anymore; however, if $p\geq 5$, all the decomposition numbers are at most one, as proved by Fayers \cite[Theorem~1.1]{FayersWeightThree08}, and thus can be computed in principle using the Jantzen--Schaper formula in the multiplicity-free setting (see Section~\ref{section: JantzenSchaper}).

If instead $p=2$ or $p=3$, one can use the description of the adjustment matrices by Fayers and Tan \cite[Theorem~3.3]{FayersTanWeightThree06} to translate the problem to a computation of decomposition numbers of Hecke algebras over a field of characteristic $0$ and quantum characteristic $p$. Unlike in the symmetric groups setting, the decomposition numbers of Hecke algebras in characteristic $0$ can be computed (recursively) using the LLT algorithm by Lascoux, Leclerc and Thibon \cite{LascouxLeclercThibonCrystal96} and Ariki's theorem \cite[Theorem~4.4]{ArikiDecomposition96}; see \cite[Section~6.1]{MathasHecke99}. The same approach can be used for $p$-weight $4$ for $p\geq 5$; the adjustment matrix is the identity matrix as proved by Fayers \cite[Theorem~2.6]{FayersJamesWeightFour07}. This approach is far from explicit, and what is worse, cannot be generalized to an arbitrary $p$-weight: James' conjecture, which predicts that the adjustment matrix is the identity matrix whenever $p$ is greater than the $p$-weight, is false. A counterexample was first found by Williamson \cite{WilliamsonSchubert17}. A recent (smaller) counterexample was found by Speyer \cite{SpeyerCounterexampleJamesArxiv26}. These counterexamples keep the problem of computing all decomposition numbers of symmetric groups wide open. While talking about Hecke algebras in characteristic $0$, a family of explicit multiplicity-free columns of decomposition matrices of these Hecke algebras (labeled by so-called $4$-increasing partitions) was described by Chuang, Miyachi and Tan \cite{ChuangMiyachiTanTilings17}. Unlike our results, their columns do not appear in every block, but they still describe many columns, in particular, for blocks of weight much smaller than the underlying quantum characteristic.

Aside from blocks of small $p$-weight, there is a formula for the decomposition numbers of symmetric groups in the RoCK blocks (also known as the Rouquier block) of $p$-weight less than $p$. It was found by Chuang and Tan \cite[Theorem~6.2]{ChuangTanFiltration03}, using their previous study of wreath products \cite{ChuangTanWreath03} and the Morita equivalence between these RoCK blocks and principal blocks of wreath products proved by Chuang and Kessar \cite{ChuangKessarWreath02}. A more detailed summary about decomposition numbers of RoCK blocks (applicable to Hecke algebras) can be found in James, Lyle and Mathas \cite{JamesLyleMathasRoCK06}. Their Corollary~4.5 provides a formula for particular decomposition numbers even when the $p$-weight is more than or equal to $p$. We note that while the formula by Chuang and Tan \cite[Theorem~6.2]{ChuangTanFiltration03} is an explicit sum, it requires a computation of many Littlewood--Richardson coefficients.

There are some special columns of the decomposition numbers known in the literature. For example: columns labeled by hook partitions for $p$ odd found by Peel \cite[Theorem~2]{PeelHook71}, partitions of length $2$ found by James \cite{JamesDecompositionI76} and partitions of length $3$ with final part less than $p$ found by Williams \cite{WilliamsThreePart06}. In all these cases, the columns lie in very restricted families of blocks.

Given this summary of (more or less) explicitly known decomposition numbers, the authors believe that our Theorem~\ref{theorem:even arms} is the first to explicitly describe decomposition numbers in each block of symmetric groups (for odd $p$). Moreover, its surprising simplicity seems to only be matched by the formulas for decomposition numbers in blocks of $p$-weight $0$ and $1$ and in columns labeled by hook and length $2$ partitions. As such, Theorem~\ref{theorem:even arms} (and its equivalent version, Theorem~\ref{theorem:maintheorem}) provides useful data for studying decomposition numbers, and sheds more light on this long-standing question.

\subsubsection*{Balanced partitions}
The odd sequence and associated sets $\mathcal{E}_{\theta}(\gamma)$ were introduced in greater generality as the $d$-runner matrix $\mathcal{R}$ and associated sets $\mathcal{E}_{\mathcal{R}}(\gamma)$ by the second author \cite{TurekMullineuxArxiv25}. The $d$-runner matrix keeps track of the number of parts $\lambda_j$ such that $\lambda_j$ has a specified non-zero remainder modulo $d$ and $\lambda_j-j$ has a specified remainder modulo $p$ (in \cite{TurekMullineuxArxiv25} $p$ is replaced by any integer $e>1$, but for simplicity, we work with $p$ in this summary). The odd sequence then coincides with the $2$-runner matrix; we have chosen to use a new term here as `$2$-runner matrix' is, on its own, unnecessarily complicated and somewhat misleading, as the matrix has only one row.

The paper then studies the maximal and minimal elements of $\mathcal{E}_{\mathcal{R}}(\gamma)$ (with respect to the dominance order). In particular, it is shown that each $\mathcal{E}_{\mathcal{R}}(\gamma)$ has a unique maximal element, and the maximal elements of these sets coincide with \textit{$d$-shift skewed partitions} -- partitions such that the arm lengths of their $p$-divisible hooks are not congruent to $-1$ modulo $d$. Note that the condition in Theorem~\ref{theorem:even arms} imposed on $\mu$ is: $\mu$ is a $2$-shift skewed partition.

The second main result of \cite[Theorem~1.5]{TurekMullineuxArxiv25} then shows that if $d$-runner matrix $\mathcal{R}$ contains zero in each row, then, the Mullineux map $m_p$ maps the maximal element $\mu$ of $\mathcal{E}_{\mathcal{R}}(\gamma)$ to the conjugate of its minimal element, provided that $\mu$ is \textit{$d$-balanced} -- the arm lengths of its $p$-divisible hooks are divisible by $d$. For $d=2$, the final condition is unnecessary as $2$-balanced and $2$-shift skewed partitions are the same objects, and \cite[Theorem~1.5]{TurekMullineuxArxiv25} is an immediate consequence of our Theorem~\ref{theorem:maintheorem} due to the following fact: the bottommost row of the decomposition matrix (ordered by any linear extension of the dominance order) which contains a non-zero entry in column labeled by $\mu$ is labeled by the conjugate of $m_p(\mu)$.

In a parallel paper by the authors \cite{HemmerTurekArms26}, we find the number of $d$-balanced $p$-regular partitions in a given block of symmetric groups. Astonishingly, this number depends only on $d, p$ and the $p$-weight $w$ of the block (not on the $p$-core) -- it equals $\binom{w + \left\lfloor(p-1)/d\right\rfloor - 1}{w}$ -- and the result applies to any integers $d,p>1$. The proof of this quantity provides an algorithm for finding all the $\binom{w + \left\lfloor(p-1)/d\right\rfloor - 1}{w}$ $d$-balanced $p$-regular partitions in a given block; in particular, we can easily find $\binom{w + (p-3)/2}{w}$ columns in any given block of $p$-weight $w$ of the decomposition matrices in odd prime characteristic $p$ using our Theorem~\ref{theorem:even arms} and the implicit algorithm from \cite{HemmerTurekArms26}.

While some of our results here make sense and hold for any integer $d>1$ in place of $2$, there are several obstacles why we cannot extend our arguments to this more general setting. Firstly, the algorithms A1 and A2 from \cite{TurekMullineuxArxiv25}, used in Section~\ref{section:Algorithms}, are more complex for $d>2$ and do not fit the setting of the Jantzen--Schaper formula; however, this issue may be overcome and is a subject of an ongoing collaboration of Gustavsson, Law, Putignano, Speyer and the second author. Secondly, and more importantly, an algebraic interpretation for the case of $d>2$ is yet to be found (while for $d=2$, we can describe odd sequences using twisted Foulkes modules; see Lemma~\ref{le:FoulkesModules}). This, together with the use of the Brauer morphism, is also the reason why we restrict our attention to symmetric groups, rather than work with Hecke algebras. Adapting the results to Hecke algebras is also part of the ongoing work of Gustavsson, Law, Putignano, Speyer and the second author.

\subsubsection*{Foulkes modules}
As mentioned throughout Section~\ref{sec:intro}, our results are inspired by the work of Giannelli and Wildon \cite{GiannelliWildonFoulkesandDecomposition15}. In particular, we follow their overall strategy to compute new decomposition numbers using the Brauer morphism.  We, however, do not repeat the procedure of finding the vertices of the indecomposable summands of the twisted Foulkes modules, as they have already been described in \cite[Theorem~1.2]{GiannelliWildonFoulkesandDecomposition15}. Instead, we fully focus on finding their (projective) indecomposable summands, which gives our main theorems.

It is well known that the Foulkes modules $H^{(2^m)}$ have a Specht filtration given by Specht modules labeled by even partitions, when working over a field of characteristic $0$. In fact, the result holds in any characteristic; the explicit filtration is found by Paget \cite[Theorem~2]{PagetFiltration07}. In a more recent paper \cite{GiannelliWildonIndecomposable16}, Giannelli and Wildon prove more structural results about the Foulkes modules $H^{(2^m)}$ in odd characteristic $p$. In particular, they describe the indecomposable summands of $H^{(2^m)}$ in $p$-weight at most $2$ and when $2m<3p$, they also describe the Loewy series of the unique indecomposable summand in the principal block. Some of their results are now direct consequences of our Theorem~\ref{thm:indecomposables}.

Results from \cite{GiannelliWildonFoulkesandDecomposition15} were adapted to the setting of Foulkes modules $H^{(a^b)}$, defined as the permutation $FS_{ab}$-module on the cosets of $S_a\wr S_b$ in $S_{ab}$, by Giannelli \cite{GiannelliDecomposition15}. While the results about vertices and Green correspondents nicely translate to Foulkes modules $H^{(a^b)}$, the translated version of \cite[Theorem~1.1]{GiannelliWildonFoulkesandDecomposition15} is more abstract, because the ordinary character of $H^{(a^b)}$ is unknown in general; finding this character is equivalent to decomposing the plethysm $s_b\circ s_a$ which is a major open problem in algebraic combinatorics as identified by Stanley \cite[Problem~9]{StanleyPositivity00} and is the subject of the longstanding Foulkes' conjecture \cite{FoulkesConjPaper}.

\subsection{Outline of the paper}

We recall the notation and basic facts about partitions and representation theory of symmetric groups in Section~\ref{sec:abacus}. Anyone familiar with these topics should be able to skim through the section and use it mainly as a reference. Section~\ref{sec:odd} examines odd sequences and the sets $\mathcal{E}_{\theta}(\gamma)$; in particular, it shows their relations to summands of twisted Foulkes modules $H^{(2^m; k)}$. It also contains several examples of columns from Theorem~\ref{theorem:even arms} (or equivalently, Theorem~\ref{theorem:maintheorem}). We then recall algorithms A1 and A2 from \cite{TurekMullineuxArxiv25} in Section~\ref{section:Algorithms} and make them formally into inverse functions. These functions and the Jantzen--Schaper formula are then used in Section~\ref{section: JantzenSchaper}, when Theorem~\ref{theorem:maintheorem} is reduced to a statement that the columns in question are multiplicity-free.

Section~\ref{sec:Brauer} changes the combinatorial narrative to an algebraic one: we show that the summands of twisted Foulkes modules which correspond to sets $\mathcal{E}_{\theta}(\gamma)$ are projective. This is done using the Brauer morphism, which is the main player of the section. We note that Section~\ref{sec:Brauer} can be read independently of the previous two sections. Finally, we deduce our main results, some corollaries, and state the follow-up conjecture in Section~\ref{sec:proofs}.

\section{Preliminaries}
\label{sec:abacus}

\subsection{Partitions}
\label{subsection:Partitions}
In this subsection, $p$ is any positive integer (in the next subsection, we specialize to $p$ a prime, but this restriction is unnecessary for the partition combinatorics). We use standard composition and partition notation. In particular, for a composition or partition $\theta = (\theta_1,\theta_2,\dots,\theta_t)$, we refer to its entries as \textit{parts}, the number of its entries as \textit{length}, and the sum of its entries as \textit{size}. We write $|\theta|$ for its size, and say that $\theta$ is a composition (or partition) of $n$ if its size is $n$. This is also denoted by $\theta\vDash n$ for compositions and $\theta\vdash n$ for partitions. In examples, we usually group the same parts together, and use the notation $a^b$ for $b$ parts equal to $a$. Finally, we say that a composition $\theta$ \textit{contains a zero} if some $\theta_i=0$ (so $\theta$ in Theorem~\ref{theorem:maintheorem} is assumed to be a composition of length $p$ which contains a zero).

We find it convenient to represent partitions on James' abacus, see \cite[Section 2.7]{JamesKerberSymmetric81} for details. Let $\lambda=(\lambda_1,\lambda_2,\dots, \lambda_t)$ be a partition. Fix an integer $r$.
The \emph{$\beta$-set} of $\lambda$ (with respect to $r$) is the set
\[
B_r(\lambda)
=
\{\lambda_i-i+r \mid i \in \mathbb{N}\}
\]
where we take $\lambda_i$ to be zero for $i>t$; this is a convention we use throughout. We set  $\beta_i=\lambda_i-i+r$, so then $\beta_1>\beta_2>\dots$. The partition may be recovered from the $\beta$-set by letting $\lambda_i=\beta_i-(r-i)$.

Changing $r$ merely translates all elements of the $\beta$-set by a fixed integer. We call the set $B_0(\lambda)$ the \textit{canonical} $\beta$-set of $\lambda$; this will be the $\beta$-set we mostly work with, although for the sake of making the results more applicable, in many statements, we allow $r$ to be any integer.

Given a $\beta$-set $B_r(\lambda)$ we can represent $\lambda$ on an abacus with $p$ runners. Arrange the integers in $p$ vertical
columns (called \emph{runners}) according to residue modulo $p$:
\[
\begin{array}{cccc}
\vdots&\vdots&&\vdots\\
0 & 1 & \cdots & p-1 \\
p & p+1 & \cdots & 2p-1 \\
2p & 2p+1 & \cdots & 3p-1 \\
\vdots & \vdots & & \vdots
\end{array}
\]

The $p$-abacus display of $\lambda$ is obtained by placing a bead in
position $b$ whenever $b\in B_r(\lambda)$ and leaving all other
positions empty. Note that any $\beta$-set of a partition is bounded above and contains all integers less than or equal to some integer $l$. Thus, we can (and will) represent our abacus displays with a finite number of beads, always with the top row (and thus assumed every other row above that) with all $p$ beads present.

For an example consider $\lambda=(6,5,4,4,2,1,1)$ and $r=9$. Then:

\[B_9(\lambda)=\{\ldots, -3,-2,-1,0,1,3,4,6,9,10,12,14\}
\]
and the abacus display on $5$ runners is shown in Figure~\ref{fig:abacus-example}.
\begin{figure}[ht]
\label{figure:exampleabacus}
\centering
\[
\abacus(lmmmr,bbbbb,bbnbb,nbnnb,bnbnb) 
\]
\caption{The $5$-abacus display of $\lambda=(6,5,4,4,2,1,1)$ with $r=9$.}
\label{fig:abacus-example}
\end{figure}
One can easily read the parts of $\lambda$ from its abacus display by, for each bead, counting the number of empty lower-numbered positions (and discarding zeros).

Let $[\lambda] = \{ (i,j)\in \mathbb{N}^2 \mid j\leq \lambda_i\}$ be the \textit{Young diagram} of $\lambda$. We call its elements \textit{nodes}. Recall that a \textit{hook} of $\lambda$ is a set $h_{ij}(\lambda) = \{ (i,j')\in [\lambda] \mid j'\geq j \}\cup \{ (i',j)\in [\lambda] \mid i'\geq i \}$ for some $(i,j)\in [\lambda]$. A \textit{rim hook} of $\lambda$ is a set $r_{ij}(\lambda) = \{ (i',j')\in [\lambda] \mid i'\geq i, j'\geq j, (i'+1, j'+1)\notin [\lambda]\}$ for some $(i,j)\in [\lambda]$. It is a standard fact that hook $h_{ij}(\lambda)$ and the corresponding rim hook $r_{ij}(\lambda)$ have equal size. If this size is $p$, we say that the (rim) hook is a (rim) \textit{$p$-hook}, and if the size is divisible by $p$, we say it is a (rim) \textit{$p$-divisible hook}.

One can remove a rim hook of a partition $\lambda$ by removing it from $[\lambda]$, which produces a Young diagram of a new partition. The inverse procedure is called the addition of a rim hook. The addition and removal of rim $p$-hooks is simple on the abacus. Suppose a bead occupies position $b$ and the position $b-p$ immediately above it (on the same runner) is empty. Sliding the bead upward from $b$ to $b-p$ replaces $b$ by $b-p$ in the $\beta$-set. On the level of Young diagrams, such pairs correspond to (rim) $p$-hooks, and this operation removes the corresponding rim $p$-hook. An important property of this sliding is that it changes $\beta$-sets of partitions with respect to some $r$ to $\beta$-sets of partitions with respect to the same $r$.  

Iterating all possible upward bead slides produces the \textit{$p$-core} of
$\lambda$, and the total number of such slides is the \textit{$p$-weight} of
$\lambda$. A partition which is equal to its $p$-core is called a \textit{$p$-core partition}. It is clear that $\gamma$ is a $p$-core partition if and only if its $p$-weight is $0$, which is further equivalent to $\gamma$ having no $p$-hooks. See Figure~\ref{fig:hooks} for an example.

\begin{figure}[ht]
    \centering
    \ytableausetup{centertableaux}
    \begin{ytableau}
    {} & & & & & \\
    & & & & \times \\
    & & \times & \times & \times\\
    & & \times \\
    &\\
    \\
    \end{ytableau}
    \qquad
    \begin{ytableau}
    {} & & & & & \\
    & & & \\
    & \times \\
    & \times \\
    \times & \times \\
    \times \\
    \end{ytableau}
    \qquad
    \begin{ytableau}
    {} & & & \times & \times & \times \\
    & & \times & \times \\
    \\
    \\
    \end{ytableau}
    \qquad
    \begin{ytableau}
    {} & & \\
    & \\
    \\
    \\
    \end{ytableau}
    \[
    \qquad \abacus(lmmmr,bbbbb,bbbbn,bnbnb,nnbon,bnnnn) \qquad\qquad\qquad \abacus(lmmmr,bbbbb,bbbbn,bnbbo,nnbnn,bnnnn) \qquad\qquad\qquad \abacus(lmmmr,bbbbb,bbbbb,bnbbn,nnbnn,onnnn) \qquad\qquad \abacus(lmmmr,bbbbb,bbbbb,bnbbn,bnbnn,nnnnn)
    \]
    \caption{Let $p=5$. Starting with partition $(6,5^2,3,2,1)$ (on the left), we obtain its $p$-core $(3,2,1^2)$ (on the right) by successively removing three $p$-rim hooks labeled by nodes $(2,3)$, $(3,1)$ and $(1,3)$, and denoted by $\times$ in the diagrams. Its $p$-weight is therefore $3$. One can see that on the abacus, these removals correspond to sliding a bead (drawn white) one space upwards. The process then terminates when no bead has a vacant position above it.}
    \label{fig:hooks}
\end{figure}

There is nothing special about $p$; the above statements remain valid if $p$ is replaced by any positive integer, in particular, by any multiple of $p$. In general, (rim) hooks correspond to pairs $(f,b)$ with $f<b$ such that James' abacus has a bead at position $b$, while position $f$ is empty. The (rim) $p$-divisible hooks then correspond to such pairs with $f=b-ap$ for some positive integer $a$. The replacement of $b$ by $f$ in a $\beta$-set then corresponds to removing the corresponding rim hook.

We will need three more standard definitions.

\begin{definition}
For a hook $h=h_{ij}(\lambda)$ of $\lambda$ we define its \textit{arm length} as $a(h)=a_{ij}(\lambda)=\lambda_i-j$ and its \textit{leg length} as $\ell(h)=\ell_{ij}(\lambda)=\lambda'_j-i$, and the \textit{hook length} as $$|h|=1+a(h)+\ell(h).$$
\end{definition}

We make the same definitions for the corresponding rim hook. One can calculate these directly from the abacus.

\begin{lemma}
\label{lemma:readinglegandarmlength off abacus}
Let $\lambda$ be a partition and suppose that on its abacus there is a bead at position $b$ and a vacant position $f<b$, corresponding to a (rim) hook $h$. Then the leg length of $h$ is obtained by counting the occupied abacus positions strictly between $f$ and $b$, and the arm length is the number of empty positions in the same interval.
\end{lemma}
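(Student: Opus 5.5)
We have to prove Lemma~\ref{lemma:readinglegandarmlength off abacus}. We translate between Young-diagram coordinates and $\beta$-numbers and then count. Since translating the $\beta$-set by $r$ changes neither the positions relative to one another nor the counts in the interval $(f,b)$, we may work with the canonical $\beta$-set $B_0(\lambda)=\{\lambda_i-i\}$.

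First we identify the hook. The bead at $b$ is $b=\lambda_i-i$ for a unique row $i$. The empty position $f$ equals $j-1-\lambda'_j$ for a unique column $j$. This is the standard complementary description: the set $\mathbb{Z}\setminus B_0(\lambda)$ equals $\{\,j-1-\lambda'_j : j\ge 1\,\}$, which follows by tracing the boundary path of $[\lambda]$, with vertical steps recording beads and horizontal steps recording gaps. The standard correspondence between pairs $(f,b)$ and hooks sends this pair to $h=h_{ij}(\lambda)$. The inequality $f<b$ is equivalent to $(i,j)\in[\lambda]$, since $j\le\lambda_i$ iff $i\le\lambda'_j$. We may take this identification as the meaning of ``corresponding to a hook'' already used above.

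Next we count the beads strictly between $f$ and $b$. The beads are the $\beta_{i'}=\lambda_{i'}-i'$, which are strictly decreasing in $i'$. Those greater than $f$ are exactly those with $i'\le\lambda'_j$. Indeed, $\lambda_{i'}\ge j$ gives $\lambda_{i'}-i'\ge j-i'>j-1-\lambda'_j$. Conversely, for $i'>\lambda'_j$ we have $\lambda_{i'}\le j-1$, so $\lambda_{i'}-i'\le j-1-i'<j-1-\lambda'_j$. The beads less than $b$ are exactly those with $i'>i$. Hence the occupied positions strictly between are indexed by $i<i'\le\lambda'_j$, which gives $\lambda'_j-i=\ell(h)$ of them.

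Finally, the interval $(f,b)$ contains $b-f-1$ integers in total. We have
\[
b-f-1=(\lambda_i-i)-(j-1-\lambda'_j)-1=(\lambda_i-j)+(\lambda'_j-i)=a(h)+\ell(h),
\]
so the number of empty positions is $a(h)+\ell(h)-\ell(h)=a(h)$. The only step needing care is the identification of gaps with the numbers $j-1-\lambda'_j$. One could instead count the empties directly, dually to the bead count, using the conjugate partition. This is routine, so there is no real obstacle.
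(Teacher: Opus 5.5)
Your proof is correct. You identify the bead $b=\lambda_i-i$ with row $i$ and the gap $f=j-1-\lambda'_j$ with column $j$, using the standard fact that $\{\lambda_i-i\}$ and $\{j-1-\lambda'_j\}$ partition $\mathbb{Z}$. The beads strictly between $f$ and $b$ are then exactly those of the rows $i<i'\le\lambda'_j$, which gives $\ell(h)$ of them. Since $b-f-1=a(h)+\ell(h)$, the interval contains $a(h)$ empty positions.

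The paper gives no proof of this lemma; it quotes it as a standard abacus fact. The rim-hook case is automatic there, because the paper defines the arm and leg lengths of a rim hook to be those of the corresponding hook. So your argument fills in what the paper takes for granted. It also agrees with the paper's convention that sliding the bead from $b$ to $f$ removes the rim hook $r_{ij}(\lambda)$.
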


\begin{example}\label{ex:arm lengths}
    The arm lengths of the removed rim hooks in Figure~\ref{fig:hooks} are $2$, $1$ and $3$, respectively. This agrees with Lemma~\ref{lemma:readinglegandarmlength off abacus}: the number of empty positions strictly between the highlighted beads and the (empty) positions immediately above them are, again, $2$, $1$ and $3$. The leg lengths are $2$, $3$ and $1$, respectively.
    Note that the left-most partition $(6,5^2,3,2,1)$ has three (rim) $5$-divisible hooks (all are in fact (rim) $5$-hooks). They all have arm length $2$, and thus $\mu = (6,5^2,3,2,1)$ satisfies the condition in Theorem~\ref{theorem:even arms}.  
\end{example}

For a $\beta$-set $B$ of some partition $\lambda$ and integers $u\leq v$, we write $\emp_B(u,v)$ for the number of empty positions $f$ on the James' abacus of $B$ such that $u\leq f \leq v$. If $u-1\notin B$ and $v\in B$, then Lemma \ref{lemma:readinglegandarmlength off abacus} tells us that $\emp_B(u,v)$ is the arm length of the hook of $\lambda$ corresponding to $(u-1, v)$. We find it convenient to group such $B$, $u-1$ and $v$ into a triple $\mathcal{B}=(B,u-1,v)$ which we call a \textit{hook triple} and use the shorthand $\emp(\mathcal{B})$ for $\emp_B(u,v)$.

\begin{definition}\label{def: p-residues}
    The \textit{$p$-residue} of a node $(i,j)\in [\lambda]$ is the remainder of $j-i$ modulo $p$ (between $0$ and $p-1$). The \textit{$p$-content} of $\lambda$ is a composition $C(\lambda):=(r_0,r_1,\dots,r_{p-1})$ where $r_k$ is the number of nodes $(i,j)\in [\lambda]$ with $p$-residue $k$.  
\end{definition}

If $h$ is a (rim) $p$-hook, then the $p$-residues of nodes $(i,j)\in h$ are numbers $0,1,\dots, p-1$ in some order. Therefore, if $\lambda$ has $p$-core $\gamma$ and $p$-weight $w$, its $p$-content is $C(\lambda) = C(\gamma) + \bm{w}$, where $\bm{w}$ is a sequence consisting of $p$ copies of $w$. In other words, the $p$-core and $p$-weight determine the $p$-content. In fact, the converse is also true.

\begin{theorem}[\cite{JamesKerberSymmetric81}, Theorem~2.7.41]\label{thm: p-residues}
    Two partitions have the same $p$-content if and only if they have the same $p$-core and $p$-weight.
\end{theorem}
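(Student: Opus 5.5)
The forward direction is the easy one. Suppose $\lambda$ has $p$-core $\gamma$ and $p$-weight $w$. Then $\lambda$ is obtained from $\gamma$ by adding $w$ rim $p$-hooks. Each rim $p$-hook contains exactly one node of each $p$-residue $0,1,\dots,p-1$; this holds because its nodes form a connected skew strip whose contents $j-i$ are $p$ consecutive integers. Summing over the $w$ hooks, which can be counted as in the abacus description of iterated bead slides, gives $C(\lambda)=C(\gamma)+\bm{w}$, exactly as noted before the statement. So the $p$-core and $p$-weight together determine the $p$-content.

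For the converse I would work on the abacus with the canonical $\beta$-set $B_0(\lambda)$. Since $C(\lambda)$ determines $|\lambda|$, the ingredient to establish is that the $p$-content determines the number of beads on each runner relative to a fixed reference, and hence the $p$-core. Concretely, I would show that adding a single node of residue $k$ corresponds to moving one bead from position $b$ with $b\equiv k-1$ to the empty position $b+1\equiv k \pmod p$. Indeed, a bead $\beta_i=\lambda_i-i$ moves to $\lambda_i+1-i$, and the added node $(i,\lambda_i+1)$ has residue $\lambda_i+1-i$. Consequently, for each runner $k$, the quantity $c_k(\lambda)$, defined as the number of beads on runner $k$ minus the number of beads on runner $k$ for the empty partition (counted within a large enough window), changes by $+1$ on runner $k$ and $-1$ on runner $k-1$ when a node of residue $k$ is added.

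Building $\lambda$ node by node from $\emptyset$ then gives $c_k(\lambda)=r_k-r_{k+1}$, with indices taken mod $p$. Thus the $p$-content determines the bead count on every runner. Sliding beads up to the top of their runners does not change these counts, and the $p$-core is obtained precisely by doing so, so the $p$-core is determined by the counts. The weight is then $(|\lambda|-|\gamma|)/p$, which is also determined.

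The main obstacle is the bookkeeping in the inductive step: one must check that the bead moved by adding a node really goes from runner $k-1$ to runner $k$ with the conventions $\beta_i=\lambda_i-i$ and residue $j-i$. One must also make sure the counts are well defined despite the infinitely many beads. I would handle the latter by truncating to $\beta$-sets $B_r$ with $r$ a large multiple of $p$, taking finitely many beads, and making the comparison in that setting.
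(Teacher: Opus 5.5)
Your proof is correct. The forward direction is exactly the observation the paper makes just before the statement: each rim $p$-hook contains one node of each residue, so $C(\lambda)=C(\gamma)+\bm{w}$. The paper gives no argument for the converse and simply cites James--Kerber, so your runner-count argument is a genuine self-contained addition rather than a reconstruction of anything in the text.

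You correctly check the key identity $c_k(\lambda)=r_k-r_{k+1}$ (indices mod $p$) against the paper's conventions. With $\beta_i=\lambda_i-i$ and residue $j-i$, adding the node $(i,\lambda_i+1)$ of residue $k$ moves the bead $\lambda_i-i$ from runner $k-1$ to runner $k$. Truncating to $B_r$ with $r$ a multiple of $p$ and $r\geq \ell(\lambda)$ handles the well-definedness of the counts.

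Your route shows explicitly that the $p$-core depends only on the consecutive differences $r_k-r_{k+1}$ of the $p$-content. The remaining information in the content is just the constant shift $\bm{w}$, fixed by $|\lambda|=\sum_k r_k$. Because the pushed-up abacus is determined by the runner counts, the argument also confirms that the $p$-core does not depend on the order in which rim $p$-hooks are removed. The same idea can be stated compactly as the identity $(x-1)\sum_{(i,j)\in[\lambda]}x^{j-i}=\sum_{i\geq 1}\left(x^{\lambda_i-i+1}-x^{1-i}\right)$, read modulo $x^p-1$. Its coefficient comparison is precisely your formula for $c_k(\lambda)$.
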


Examples of $p$-residues and $p$-contents are in Figure~\ref{fig:content}

\begin{figure}[ht]
    \centering
    \ytableausetup{centertableaux}
    \begin{ytableau}
    0 & 1 & 2 & 3 & 4 & 0 \\
    4 & 0 & 1 & 2 & 3 \\
    3 & 4 & 0 & 1 & 2 \\
    2 & 3 & 4 \\
    1 & 2\\
    0 \\
    \end{ytableau}
    \qquad
    \begin{ytableau}
    0 & 1 & 2 \\
    4 & 0 \\
    3\\
    2\\
    \end{ytableau}
    \caption{Let $p=5$. The $p$-residues of the nodes of the Young diagram of partition $\mu=(6,5^2,3,2,1)$ and its $p$-core $\gamma=(3,2,1^2)$ are displayed above. Their $p$-contents are $C(\mu) = (5, 4, 5, 4, 4)$ and $C(\gamma) = (2,1,2,1,1)$. Thus $C(\mu) = C(\gamma) + \bm{3}$, which agrees with Theorem~\ref{thm: p-residues} which states that the partitions with $p$-content $C(\gamma) + \bm{3}$ are precisely partitions with $p$-core $\gamma$ and $p$-weight $3$.}
    \label{fig:content}
\end{figure}

Finally, we recall the \textit{dominance order}, a partial order $\unrhd$ on the set of partitions defined by $\mu \unrhd \lambda$ if and only if for all integers $j\geq 1$ we have
\[
\sum_{i=1}^j \mu_i \geq \sum_{i=1}^j \lambda_i,
\]
where we again use the convention that $\lambda_i$ and $\mu_i$ is $0$ whenever $i$ is greater than the length of $\lambda$ and $\mu$, respectively. For readers unfamiliar with the dominance order, our arguments remain valid when the lexicographic order -- a refinement of the dominance order -- is used instead; however, we opted to use the standard order in the representation theory of symmetric groups. 

\begin{example}\label{ex:dominance}
    For $\lambda=(6,4,3^4)$, $\mu=(6,5^2,3,2,1)$ and $\nu=(6,5^2,2^2,1^2)$, the sums of their first $i$ parts with $i=1,2,\dots$ are $6,10,13,16,19,22,22,22,\dots$, $6,11,16,19,21,22,22,22,\dots$ and $6,11,16,18,20,21,22,22,\dots$, respectively. Thus $\mu\rhd\lambda$ and $\mu\rhd\nu$ and $\lambda$ and $\nu$ are incomparable in the dominance order. 
\end{example}

We will occasionally use the fact that if $\lambda$ and $\nu$ are partitions of the same size and there are rim hooks $g$ and $h$ of $\lambda$ and $\nu$, respectively, such that $[\lambda]\setminus g = [\nu] \setminus h$, then $\lambda$ and $\nu$ are comparable in the dominance order. Furthermore, working with $\beta$-sets with respect to fixed $r$, they are easy to compare: if $(B,b-s,b)$ and $(C,c-s,c)$ are hook triples corresponding to $g$ and $h$, respectively, then $\lambda\lhd\nu$ if and only if $b<c$.

\subsection{Representation theory of symmetric groups}\label{se:rep}

Let $F$ be a field of (prime) characteristic $p$ and $n$ be a non-negative integer. Here, we summarise basic results from the representation theory of the symmetric group $S_n$ in characteristic $p$. The statements can be found in standard references for modular representation theory of finite groups, such as \cite{AlperinLocal86, BensonRepresentationCohomologyI91, SerreRepresentation77} for representation theory of symmetric groups, such as \cite{JamesSymmetric78, JamesKerberSymmetric81}, and for decomposition numbers of symmetric groups (and Hecke algebras) \cite{DipperJamesHecke86, MathasHecke99}.

The \textit{Specht modules} $S^{\lambda}$ indexed by partitions of $n$ are modules of $S_n$ defined over integers, and thus over any ring by the extension of scalars. When defined over a field of characteristic zero, they are precisely the (pairwise non-isomorphic) irreducible modules; however, this is not the case for prime characteristic. Indeed, the irreducible $FS_n$-modules $D^{\mu}$ are indexed by \textit{$p$-regular} partitions of $n$ -- partitions of $n$ such that each of its entries appears at most $(p-1)$-times -- and can be defined as (simple) heads of the corresponding Specht modules.

\begin{example}\label{ex:regular}
    Partition $(6,5^2,3,2,1)$ is $p$-regular for any odd prime $p$, although it is not $2$-regular thanks to its two parts equal to $5$. Combining this with Example~\ref{ex:arm lengths}, we see that Theorem~\ref{theorem:even arms} applies with $p=5$ and $\mu=(6,5^2,3,2,1)$ (and $\lambda$ any partition of $22$).
\end{example}

The \textit{decomposition number} $d_{\lambda\mu} = \left[S^{\lambda} : D^{\mu}\right]$ counts the number of composition factors of the reduction of $S^{\lambda}$ modulo $p$ isomorphic to $D^{\mu}$. We will need some standard facts about decomposition numbers. Firstly, we have $d_{\mu\mu} = 1$. Secondly, if the decomposition number $d_{\lambda\mu}$ is positive, then $\mu\unrhd\lambda$. Thirdly, the decomposition numbers depend only on the characteristic $p$, not the actual field $F$. The next (key) fact requires more prerequisites.

Let $G$ be a finite group and $V$ an indecomposable $FG$-module. A minimal subgroup $K$ of $G$ (with respect to inclusion) for which there is an $FK$-module $U$ such that $V$ is a direct summand of $U\Ind^G$ is called a \textit{vertex} of $V$. Up to conjugation in $G$, there is a unique vertex of $V$, and it is a $p$-group. We call $V$ a \textit{projective} module if its vertex is the trivial subgroup of $G$ and, more generally, say that an $FG$-module is \textit{projective} if all its indecomposable summands are projective. This is a less standard definition of projective modules; however, it is most suitable for our purposes. Projective objects can be defined in a variety of other settings; we will occasionally need them in the setting of $\mathcal{O}$-free $\mathcal{O}G$-modules for a discrete valuation ring $\mathcal{O}$ of characteristic $0$. In this setting, $\mathcal{O}G$-modules are always assumed to be free over $\mathcal{O}$, even if it is not explicitly mentioned.

Each indecomposable projective $FG$-module has a simple head, and, working up to isomorphisms, passing to the (simple) head of a module is a bijection between the indecomposable projective $FG$-modules and the irreducible $FG$-modules. Returning to the symmetric group, we denote by $P^{\mu}$ the indecomposable projective module with simple head $D^{\mu}$. Let us now assume that $F$ is a residue field of a discrete valuation ring $\mathcal{O}$ of characteristic $0$. Up to isomorphisms, reduction modulo the maximal ideal of $\mathcal{O}$ is a bijection between (indecomposable) projective $\mathcal{O}S_n$-modules and (indecomposable) projective $FS_n$-modules; we write $\psi^{\mu}$ for the ordinary character of the unique projective lift of $P^{\mu}$ (which, for the sake of simplicity, we also denote by $P^{mu}$). The essential property of decomposition numbers is that if $\chi^{\lambda}$ denotes the ordinary character of the Specht module $S^{\lambda}$, then
\begin{equation}\label{eq:projective}
    \psi^{\mu} = \sum_{\lambda} d_{\lambda\mu} \chi^{\lambda},
\end{equation}
where the sum runs over all partitions of the same size as $\mu$.

For a finite group $G$, recall that the \textit{blocks} of $FG$ are the indecomposable summands of $FG$ considered as an $FG$-bimodule. An $FG$-module $V$ belongs to block $A$ if $AV = V$ while $A'V=0$ for any other block $A'$ of $FG$. If $V$ is any $FG$-module, we then have the canonical block decomposition $V= \bigoplus_A AV$, where $A$ runs over all blocks of $FG$ and each $AV$ lies in block $A$. The blocks of symmetric groups are well-known, even though the theorem describing them is misleadingly known as Nakayama's conjecture.

\begin{theorem}[Nakayama's conjecture]\label{thm:NakayamaF}
    The blocks of $FS_n$ correspond to $p$-core partitions $\gamma$ such that there exists a partition of $n$ with $p$-core $\gamma$. The $FS_n$-modules $S^{\lambda}$, $D^{\mu}$ and $P^{\mu}$ belong to the block corresponding to the $p$-core of $\lambda$, $\mu$ and $\mu$, respectively. 
\end{theorem}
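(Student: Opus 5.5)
The plan is to prove the two halves separately. Set up: (a) central characters separate the blocks, and the $p$-content is a complete central-character invariant; (b) any two partitions with the same $p$-core are linked through Specht modules. The assertions about $D^\mu$ and $P^\mu$ then follow formally. $D^\mu$ is the head of $S^\mu$, and $S^\mu$ is indecomposable (its head is simple), so $S^\mu$ lies in a single block, and $D^\mu$ lies in that same block. Likewise $P^\mu$ is indecomposable with head $D^\mu$, so it lies in the block of $D^\mu$. It therefore suffices to show that $S^\lambda$ and $S^\nu$ (for $\lambda,\nu\vdash n$) lie in the same block if and only if $\lambda$ and $\nu$ have the same $p$-core. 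Every block contains some $S^\lambda$, because every $D^\mu$ is a quotient of some $S^\mu$. Combined with this criterion, that gives the claimed bijection between blocks and $p$-cores.

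For the implication "same block $\Rightarrow$ same $p$-core", I would use Jucys--Murphy elements $L_k=\sum_{i<k}(i\,k)\in\mathbb{Z}S_n$. The elementary symmetric polynomials $e_j(L_1,\dots,L_n)$ are central and integral, and on the integral Specht module $S^\lambda$ they act by the scalar $e_j(c_1,\dots,c_n)$. Here $c_1,\dots,c_n$ are the contents $j'-i'$ of the nodes $(i',j')$ of $[\lambda]$; this follows by triangularity on Young's seminormal/Murphy basis. Reducing modulo $p$, every central element acts by the same scalar on all modules of a given block; this holds for the composition factors, hence for $S^\lambda\otimes F$. So if $S^\lambda$ and $S^\nu$ share a block, then $\prod_k (x-c_k)\equiv\prod_k(x-c'_k)$ in $\mathbb{F}_p[x]$. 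Since $\mathbb{F}_p[x]$ is a UFD, the multisets of contents reduced mod $p$ agree. These multisets are exactly the $p$-contents $C(\lambda)=C(\nu)$, so by Theorem~\ref{thm: p-residues} $\lambda$ and $\nu$ have the same $p$-core.

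For the converse I would induct on the $p$-weight $w$. It suffices to show that if $\lambda$ and $\nu$ are obtained from a common partition $\sigma\vdash n-p$ by adding a rim $p$-hook, then $S^\lambda$ and $S^\nu$ are in the same block. Indeed, by induction all partitions of $n-p$ with core $\gamma$ are linked, and any two partitions of weight $w$ with core $\gamma$ are connected through a chain of such "one hook removed" moves. The chain comes from sliding beads on the abacus: first move one bead down on $\lambda$'s abacus to reach a common weight-$(w-1)$ partition, and use the inductive hypothesis there, combined with Brauer's characterisation that linkage is generated by characters appearing together in a projective character. For the one-hook step, I would take a projective $\mathcal{O}S_{n-p}$-module $Q$ containing $\chi^\sigma$ and the projective indecomposable $\mathcal{O}S_p$-module $P$ with character $\chi^{(p)}+\chi^{(p-1,1)}$. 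Then $(Q\boxtimes P)\Ind^{S_n}$ is projective, and by Littlewood--Richardson its character contains all $\chi^\lambda$ with $\lambda/\sigma$ a rim $p$-hook. Since hooks of $S_p$ are linked in a chain (weight-one block, Brauer tree a line), I would refine this with $P$ replaced by successive PIMs $\chi^{(p-i,1^i)}+\chi^{(p-i-1,1^{i+1})}$. Using $\chi^\lambda\Res_{S_{n-p}\times S_p}$ and Murnaghan--Nakayama, this should show that consecutive partitions $\sigma\cup(\text{rim }p\text{-hook})$, ordered by leg length, occur together in a single PIM.

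The main obstacle is this last step. The induced projective character has many constituents besides those obtained from $\sigma$ by adding rim $p$-hooks, and the content argument has to discard them. I would first try to circumvent the bookkeeping entirely: apply the content criterion to the characters of the induced projective to cut it down to the block of core $\gamma$. Then I would argue that an indecomposable summand containing $\chi^\lambda$ must also contain $\chi^\nu$, by restricting back to $S_{n-p}\times S_p$ and using that $\chi^\sigma\boxtimes(\text{hook})$ appears in both restrictions with the hook characters linked in $S_p$.
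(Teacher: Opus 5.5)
The paper gives no proof of this statement; it quotes it as classical background (the Brauer--Robinson theorem), so your argument has to stand on its own. Your reductions for $D^{\mu}$ and $P^{\mu}$ are fine. So is your Jucys--Murphy argument for ``same block $\Rightarrow$ same $p$-content $\Rightarrow$ same $p$-core'', after two small repairs. First, a central element acts by the block's scalar on \emph{simple} modules, not on all modules of the block; that is all you actually use. Second, for non-$p$-regular $\lambda$ you must say why $S^{\lambda}$ lies in a single block, e.g.\ because it is the reduction of an $\mathcal{O}$-lattice in a simple $KS_n$-module.

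The genuine gap is the converse, ``same $p$-core $\Rightarrow$ same block'', exactly at the step you flag. That $\chi^{\lambda}$ and $\chi^{\nu}$ both occur in the character of the projective module $(Q\boxtimes P_i)\Ind^{S_n}$ carries no block information: every $\chi^{\lambda}$ occurs in the regular character of $\mathcal{O}S_n$. Brauer linkage requires a common \emph{indecomposable} projective, and neither proposed fix produces one.

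\begin{itemize}
\item Cutting the module down by $p$-content is circular. It yields the sum of the components in all blocks of core $\gamma$, which is a single block only if the theorem is already known.
\item Restricting an indecomposable summand $U\ni\chi^{\lambda}$ to $S_{n-p}\times S_p$ only shows that $U$ contains \emph{some} $\chi^{\lambda'}$ whose restriction contains $\chi^{\sigma}\boxtimes\chi^{\eta}$ for a neighbouring hook $\eta$. But $\lambda'/\sigma$ need not be a rim hook. For $p=3$ and $\sigma=(3)$, the partition $\lambda'=(4,1,1)$ has $c^{\lambda'}_{\sigma,(2,1)}=c^{\lambda'}_{\sigma,(1^3)}=1$ while $\lambda'/\sigma$ is disconnected. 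So nothing forces $\lambda'=\nu$.
\item You also cannot expect rim-hook additions with neighbouring or equal leg lengths to share a PIM. In characteristic $3$, $(6)$ and $(3,3)$ both arise from $(3)$ by adding a rim $3$-hook of leg length $0$, yet $S^{(6)}=D^{(6)}$ and $[S^{(3,3)}:D^{(6)}]=0$.
\end{itemize}

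There are two further slips. Inducing $\chi^{\sigma}\boxtimes(\chi^{(p)}+\chi^{(p-1,1)})$ only reaches rim hooks of leg length at most $1$. The reason is that an LR filling of a ribbon needs an entry larger than $1$ at the right end of every row below the first. Also, the chain step should be the purely combinatorial connectivity of the ``common hook removal'' graph, proved by lifting a weight-$(w-1)$ chain. It should not be the inductive linkage in $S_{n-p}$.

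The missing idea is most easily supplied by finishing your own central-character argument rather than inducing projectives. By Farahat--Higman (recast by Murphy via Jucys--Murphy elements), $Z(\mathbb{Z}S_n)$ is generated as a ring by $e_1(L_1,\dots,L_n),\dots,e_n(L_1,\dots,L_n)$. Class sums give $Z(FS_n)=Z(\mathbb{Z}S_n)\otimes F$. Hence the central character $\omega_B\colon Z(FS_n)\to F$ of the block $B$ containing $S^{\lambda}$ is determined by the residues $e_j(c_1,\dots,c_n)\bmod p$, i.e.\ by $C(\lambda)$.

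Distinct blocks have distinct central characters: for the block idempotent $e_B$ one has $\omega_B(e_B)=1$ and $\omega_{B'}(e_B)=0$ when $B'\neq B$. So equal $p$-contents force equal blocks. By Theorem~\ref{thm: p-residues} this is exactly the converse, with no induction on the weight. Without that theorem you would need a genuinely different linkage argument, such as Brauer and Robinson's character-theoretic one.
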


By Theorem~\ref{thm: p-residues}, we can alternatively index the blocks of $FS_n$ by all the possible $p$-contents of partitions of $n$. The $FS_n$-modules $S^{\lambda}$, $D^{\mu}$ and $P^{\mu}$ then belong to the block indexed by $C(\lambda)$, $C(\mu)$ and $C(\mu)$, respectively. As with the projective modules, if $F$ is the residue field of a discrete valuation ring $\mathcal{O}$ of characteristic $0$, blocks of $FS_n$ lift to blocks of $\mathcal{O}S_n$. One can then still talk about $\mathcal{O}S_n$-modules belonging to a block and the canonical decomposition. There is also an analogue of Theorem~\ref{thm:NakayamaF}, which we state here using the $p$-content, for future references.

\begin{theorem}[Nakayama's conjecture for $\mathcal{O}S_n$]\label{thm:NakayamaO}
    The blocks of $\mathcal{O}S_n$ correspond to $p$-contents of partitions of $n$. The $\mathcal{O}S_n$-modules $S^{\lambda}$ and $P^{\mu}$ belong to the block corresponding to $C(\lambda)$ and $C(\mu)$, respectively.
\end{theorem}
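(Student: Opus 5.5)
The plan is to deduce this from Theorem~\ref{thm:NakayamaF} and Theorem~\ref{thm: p-residues} by lifting block idempotents. I assume, as is standard for a $p$-modular system, that $\mathcal{O}$ is complete with residue field $F$.

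First I show that reduction modulo the maximal ideal $\mathfrak{m}$ of $\mathcal{O}$ gives a bijection between the primitive central idempotents of $\mathcal{O}S_n$ and those of $FS_n$. The class sums form an $\mathcal{O}$-basis of $Z(\mathcal{O}S_n)$ and an $F$-basis of $Z(FS_n)$, so the reduction map $Z(\mathcal{O}S_n)\to Z(FS_n)$ is surjective with kernel $\mathfrak{m}Z(\mathcal{O}S_n)$. The ring $Z(\mathcal{O}S_n)$ is a commutative $\mathcal{O}$-order and $\mathcal{O}$ is complete. Hence idempotents of $Z(FS_n)$ lift uniquely to idempotents of $Z(\mathcal{O}S_n)$, and orthogonal decompositions lift as well. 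Consequently the primitive central idempotents correspond, and the blocks of $\mathcal{O}S_n$ are in bijection with the blocks of $FS_n$. By Theorem~\ref{thm:NakayamaF} the latter are indexed by the $p$-cores $\gamma$ of partitions of $n$. Since $n$ is fixed, the $p$-weight is $(n-|\gamma|)/p$, so Theorem~\ref{thm: p-residues} turns this indexing into one by $p$-contents of partitions of $n$.

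Next I locate the modules. Let $e$ be the block idempotent of $\mathcal{O}S_n$ corresponding to $C(\lambda)$, and let $e'$ be any other block idempotent, with reductions $\bar e$ and $\bar e'$. Because $S^\lambda$ is $\mathcal{O}$-free, $e'S^\lambda$ is an $\mathcal{O}$-direct summand of $S^\lambda$. Its reduction is therefore $\bar e'(S^\lambda\otimes_{\mathcal{O}}F)$. By Theorem~\ref{thm:NakayamaF} the module $S^\lambda\otimes F$ lies in the block $\bar e$, so this reduction is $0$. Nakayama's lemma, applied to the finitely generated $\mathcal{O}$-module $e'S^\lambda$, then gives $e'S^\lambda=0$. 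Hence $S^\lambda=eS^\lambda$ lies in the block labeled $C(\lambda)$.

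The same argument applies verbatim to the projective lift $P^\mu$, since its reduction is the $FS_n$-module $P^\mu$, which lies in the block of $C(\mu)$.

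The only step that needs care is the idempotent lifting, specifically checking that primitivity is preserved in both directions. I would handle this with the standard argument: if a lifted idempotent decomposed in $Z(\mathcal{O}S_n)$, its reduction would decompose too, because the reduction of an idempotent is zero only if the idempotent itself is zero. Everything after that step is formal.
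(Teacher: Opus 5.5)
Your argument is correct and takes essentially the paper's approach. The paper gives no proof: it notes that blocks of $FS_n$ lift to blocks of $\mathcal{O}S_n$ and relabels them by $p$-contents via Theorem~\ref{thm: p-residues}, and your idempotent lifting in $Z(\mathcal{O}S_n)$, the transfer of labels, and the Nakayama's lemma step for $S^{\lambda}$ and $P^{\mu}$ fill in exactly that. Assuming $\mathcal{O}$ is complete is harmless, since the paper already needs this for the projective lift $P^{\mu}$ to exist, and it takes $\mathcal{O}$ to be the $p$-adic integers when it uses these facts.
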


If $\tau$ is an integer sequence of length $p$, and $V$ is an $FS_n$-module or an $\mathcal{O}S_n$-module, we let $V_{\tau}$ be $AV$ if there is a block $A$ corresponding to $\tau$ (that is, $\tau$ is a $p$-content of some partition of $n$) and $0$ otherwise. The canonical decomposition of $V$ can be rewritten as $V = \bigoplus_{\tau} V_{\tau}$, where $\tau$ runs over all integer sequences of length $p$. We end this summary with the description of the outer tensor product with the sign.

\begin{theorem}[Pieri's rule]\label{thm:Pieri}
    Suppose that $M$ is a set of some partitions of $n$ and $\sum_{\lambda\in M}\chi^{\lambda}$ is an ordinary character of some $\mathcal{O}S_n$-module $V$. Then $\left(V\boxtimes \sgn\right)\Ind^{S_{n+k}}$ has ordinary character
    \[
    \sum_{\lambda\in M}\sum_{\nu\in \Add_k(\lambda)}\chi^{\nu},
    \]
    where $\Add_k(\lambda)$ is the set of partitions obtained by adding $k$ nodes to pairwise different (possibly empty) rows of $\lambda$.
\end{theorem}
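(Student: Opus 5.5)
The plan is to reduce the statement to the classical Pieri rule for characters in characteristic zero. Let $K$ be the field of fractions of $\mathcal{O}$. The ordinary character of an $\mathcal{O}$-free module is the character of its extension of scalars to $K$. Induction commutes with extension of scalars, since $\left(V\boxtimes\sgn\right)\Ind^{S_{n+k}} \otimes_{\mathcal{O}} K \cong \left((V\otimes_{\mathcal{O}}K)\boxtimes \sgn\right)\Ind^{S_{n+k}}$. Hence the ordinary character of $\left(V\boxtimes \sgn\right)\Ind^{S_{n+k}}_{S_n\times S_k}$ is the induced character $\left(\chi_V\times \chi^{(1^k)}\right)\Ind^{S_{n+k}}$. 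Here I use that $\sgn_{S_k}$ has ordinary character $\chi^{(1^k)}$, and that the character of an outer tensor product is the product of the characters.

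Because $\chi_V = \sum_{\lambda\in M}\chi^\lambda$ and induction is additive on characters, it suffices to prove, for a single partition $\lambda\vdash n$,
\[
\left(\chi^\lambda\times\chi^{(1^k)}\right)\Ind^{S_{n+k}} = \sum_{\nu\in\Add_k(\lambda)}\chi^\nu .
\]

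For this I would pass to symmetric functions via the Frobenius characteristic map. It sends $\chi^\lambda$ to the Schur function $s_\lambda$, sends $\chi^{(1^k)}$ to the elementary symmetric function $e_k$, and turns induction from $S_n\times S_k$ into multiplication. The identity then becomes $s_\lambda e_k=\sum_{\nu} s_\nu$, where $\nu$ runs over partitions with $\nu/\lambda$ a vertical $k$-strip. A skew shape $\nu/\lambda$ is a vertical strip exactly when its $k$ nodes lie in pairwise different rows, so this index set is $\Add_k(\lambda)$.

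This dual Pieri rule is standard. It follows from the usual Pieri rule $s_{\lambda'}h_k=\sum s_{\rho}$, summed over horizontal strips $\rho/\lambda'$, by applying the involution $\omega$, which sends $s_\rho\mapsto s_{\rho'}$ and $h_k\mapsto e_k$. Equivalently, it is Young's rule tensored with the sign character: $\chi^{\lambda'}\otimes\sgn=\chi^{\lambda}$, and horizontal strips of $\lambda'$ correspond to vertical strips of $\lambda$ under conjugation.

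There is no genuine obstacle; the proof amounts to citing the classical result, for instance from \cite{JamesSymmetric78} or \cite{JamesKerberSymmetric81}. The only point needing care is the bookkeeping between $\mathcal{O}$-forms and ordinary characters, namely checking that induction commutes with extension of scalars to $K$, which is immediate from $\mathcal{O}G\otimes_{\mathcal{O}}K\cong KG$.
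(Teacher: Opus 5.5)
Your proposal is correct and matches the paper's treatment. The paper gives no proof of this statement; it records it in the preliminaries as the classical (dual) Pieri rule. Your reduction is the standard justification behind that citation: extend scalars to the fraction field of $\mathcal{O}$, then use the Frobenius characteristic to obtain $s_\lambda e_k=\sum_{\nu}s_\nu$ over vertical $k$-strips $\nu/\lambda$, which are exactly the elements of $\Add_k(\lambda)$.
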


\section{Odd sequences}
\label{sec:odd}
It has been well-known for decades that the ordinary character of the twisted Foulkes module $H^{(2^m;k)}$ decomposes as one copy of each irreducible character $\chi^\mu$ where $\mu$ runs over partitions of $2m+k$ with exactly $k$ odd parts (see \cite{InglisRichardsonSaxlModel90} for a short proof of this fact), so there is a long history of sorting partitions by counting how many odd parts they have. This continued in the work of Giannelli--Wildon we described in Section~\ref{sec:intro}. We show that the odd sequence -- a refinement of this statistic, where we keep track not only of which parts $\lambda_i$ are odd, but also of which runner on the $p$-abacus the corresponding bead lies -- also arises when considering decompositions of these twisted Foulkes modules.

Our application of odd sequences will be to the representation theory in odd prime characteristic $p$, but the underlying combinatorics we discuss here works for any odd integer $p>1$ (and in many cases even for all integers $p>1$). So, if not stated otherwise, in this and the following section $p>1$ is any odd integer. We recall the definition of the odd sequence of a partition from the introduction.

\begin{definition}
\label{def:oddsequence}
The \emph{odd sequence} of a partition $\lambda$ is a composition $O(\lambda):=(n_0, n_1, \dots, n_{p-1})$ where $n_i$ is the number of parts $\lambda_j$ of $\lambda$ with $\lambda_j$ odd and the $p$-residue of the right-most node in row $j$ equal to $i$, i.e. $\lambda_j - j \equiv i \pmod{p}$.
\end{definition}

\begin{remark}
\label{remark: oddsequencecountsoddbeads}
If we represent $\lambda$ on a $p$-abacus with a multiple of $p$ drawn beads (for instance, using the canonical $\beta$-set of $\lambda$), then $n_i$ counts the number of \textit{odd beads} on runner $i$, where an odd bead is defined as a bead with an odd number of empty lower-numbered positions on the abacus.
\end{remark}

An example of an odd sequence is in Figure~\ref{fig:odd sequence}.

\begin{figure}[ht]
    \centering
    \ytableausetup{centertableaux}
    \begin{ytableau}
    {} &  &  &  &  &  \\
     &  &  &  & 3 \\
     &  &  &  & 2 \\
     &  & 4 \\
     & \\
    0 \\
    \end{ytableau}
    \qquad
    $\abacus(lmmmr,bbbbb,bbbbn,onbno,nnoon,bnnnn)$
    \caption{Let $p=5$. The right-most nodes of odd parts of $\mu=(6,5^2,3,2,1)$ contain their $p$-residue. One can immediately read off the odd sequence of $\mu$: it is $(1,0,1,1,1)$. Alternatively, one can count the number of odd beads (drawn white) on each runner of the displayed abacus of the canonical $\beta$-set of $\mu$.}
    \label{fig:odd sequence}
\end{figure}

Now fix a $p$-core $\gamma$ and a composition $\theta$ of length $p$. It is an easy exercise (proved in more generality in \cite[Corollary~2.11]{TurekMullineuxArxiv25}) that we can, starting with the abacus display for $\gamma$, slide beads down in such a way to ensure $n_i$ odd beads on runner $i$.
That is:

\begin{lemma}
\label{lem:omegaOiswelldefined}
  There exists a partition $\lambda$ with $p$-core $\gamma$ and odd sequence $O(\lambda)=\theta$.
\end{lemma}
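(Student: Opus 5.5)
We build $\lambda$ directly on the abacus, starting from the display of $\gamma$ and sliding beads down. Each slide of a bead one position down its runner adds a rim $p$-hook, so the $p$-core stays $\gamma$ throughout; all the work is in controlling the odd sequence.

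We represent $\gamma$ on a $p$-abacus using a $\beta$-set with a multiple of $p$ beads and with many full rows at the top. By Remark~\ref{remark: oddsequencecountsoddbeads}, $n_i$ is then the number of odd beads on runner $i$, where an odd bead is one with an odd number of empty positions below it in the numbering. The plan is to create the required odd beads one at a time. Each new odd bead will be a bead pushed far down its runner, past all other beads, so that it does not change the parity status of any bead already placed.

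The basic move is as follows. Take the bottom-most bead on runner $i$ and push it down its runner to a position $b$ lying in a row below every other bead. The positions below $b$ that are empty are then all positions below $b$ except the other beads. Their number is $b-(N-1)$ plus a correction depending only on the convention, where $N$ is the total number of beads. Its parity therefore depends only on the parity of $b$. Moving the bead one further row down changes $b$ by $p$, which is odd, so it flips this parity. Hence we can always choose the row so that the pushed bead is odd.

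The beads are processed in decreasing order of final position, reading from the bottom of the abacus upward. A bead's parity depends only on the empty positions below it. So pushing a bead to a position beyond all others changes the empty-position count of every other bead by exactly one. We handle this by processing in the right order: always move the bead that is to become the new lowest one. An easier bookkeeping is to process pairs, since pushing a bead far down flips the parity of everything beneath its old position. The cleanest version is to place odd beads in successive new bottom rows. The $j$-th new odd bead goes on runner $i_j$ in a fresh row below all previous ones, possibly skipping one row to fix parity. The beads previously placed lie above it, and the only change below them is that a bead left its original position, turning an occupied spot into an empty one. That flips their parity, so we must compensate.

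The main obstacle is exactly this interaction. Our compensation is to move the beads in pairs on the same runner. Moving two beads from a full top row flips the parity of a given lower bead twice, so beads placed earlier keep their parity. Alternatively, we take the beads to be moved from rows far above, which are full, and let the original beads of $\gamma$'s display absorb the changes. Since the original $\gamma$-beads may themselves be odd, we also need to fix the parity of the beads of $\gamma$. We do this by noting that we may prepend an even number of full rows and choose which $\beta$-set to use. Failing a clean argument, we would instead induct on $|\theta|$. Given $\lambda$ with core $\gamma$ and odd sequence $\theta-e_i$, we push the lowest bead of runner $i$ down by one or two rows to the new bottom. This flips the parity of all beads in between, which lie between its old and new positions. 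Those beads can be avoided by first pushing it far enough that the intermediate beads come in an even number per runner, and then adjusting by repeating the move.
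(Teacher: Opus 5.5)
Your overall strategy (start from the abacus of $\gamma$ and move beads down their own runners, which preserves the $p$-core) is the one the paper has in mind; the paper only calls it an easy exercise and cites earlier work. However, the parity bookkeeping, which is the entire content of the lemma, is never resolved, and the specific fixes you propose fail.

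(i) You only ever create odd beads, but $\gamma$ may already carry more odd beads on runner $i$ than $\theta_i$ (e.g.\ $p=3$, $\gamma=(1)$, $\theta=\bm{0}$). Prepending full rows or changing the $\beta$-set cannot help. Full rows contain no empty positions, so every bead keeps the same number of empty positions below it; it is literally the same partition. The fallback induction on $|\theta|$ has the same hole in its base case $\theta=\bm{0}$, namely the existence of an even partition with $p$-core $\gamma$. Its inductive step is also wrong as stated. If a runner carries $c$ flipped beads, $o$ of them odd, it ends with $c-o$ odd beads whatever the parity of $c$. Moreover, the lowest bead on runner $i$ may already be odd, in which case pushing it gains nothing.

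(ii) Moving two beads of the same runner out of full rows does not protect the other beads. Only beads lying beyond both original positions are flipped twice. The $p-1$ beads strictly between the two original positions are flipped once; they had no empty position below them, so they become odd parts equal to $1$.

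The missing idea is to move two beads that are \emph{adjacent in the bead order}. With the canonical $\beta$-set, the $j$-th largest bead $\beta_j$ carries the part $\beta_j+j$, and its runner is the residue of $\lambda_j-j$. Take beads $x<x'$ with no bead strictly between them, and move them down their own runners to positions $y_1<y_2$ below all other beads. Then:
\begin{itemize}
\item every other bead keeps its position while its index changes by $0$ or $2$, so its parity is unchanged;
\item the runner counts, hence the $p$-core, are preserved;
\item since $p$ is odd, lowering $y_1$ (resp.\ $y_2$) by one row flips the parity of $y_1+2$ (resp.\ $y_2+1$), so each moved bead can be made odd or even at will.
\end{itemize}
If the odd bead is the last bead, just drop it one row on its own.

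Apply this to an odd bead and its successor, making both new beads even. Each application strictly decreases the number of odd beads, so you reach an even partition with $p$-core $\gamma$. Next, apply it to positions $a,a+1$ deep in the full region, with $a\equiv i\pmod p$, making $y_1$ odd and $y_2$ even. This adds exactly one odd bead on runner $i$ and changes nothing else. Repeating this $\theta_i$ times for each $i$ gives odd sequence $\theta$.
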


Lemma \ref{lem:omegaOiswelldefined} is false for $p$ even, as adding a rim $p$-hook preserves the parity of the size of the odd sequence, so starting with the odd sequence of $\gamma$ we can only obtain odd sequences of the same size parity. The full characterization of $\theta$ for which Lemma~\ref{lem:omegaOiswelldefined} holds when $p$ is even can be read off from \cite[Corollary~2.11]{TurekMullineuxArxiv25}. For simplicity, we continue with $p$ odd.

Lemma~\ref{lem:omegaOiswelldefined} justifies the following definitions:

\begin{definition}
\label{def:omega_O}
   Let $w_\theta(\gamma)$ denote the minimum $p$-weight of a partition with $p$-core $\gamma$ and odd sequence $\theta$.
\end{definition}

\begin{definition} 
   Let $\mathcal{E}_\theta(\gamma)$ denote the set of all partitions with $p$-core $\gamma$, $p$-weight $w_\theta(\gamma)$ and odd sequence $\theta$. Observe that each of these is a partition of $n=|\gamma|+pw_\theta(\gamma).$
\end{definition}

\begin{remark}\label{remark:GWSets}
    Recall that Giannelli and Wildon define $\mathcal{E}_k(\theta)$ to be the set of partitions with $p$-core $\gamma$, $k$ odd parts and minimal possible $p$-weight, called $w_k(\theta)$. We immediately see that Giannelli--Wildon's $\mathcal{E}_0(\gamma)$ is the same as our $\mathcal{E}_{\bm{0}}(\gamma)$, and consists of minimal (with respect to the size) even partitions with $p$-core $\gamma$. More generally, partitions with odd sequence $\theta\vDash k$ have $k$ odd parts; therefore $w_{\theta}(\gamma)\geq w_k(\gamma)$ and we obtain equality if and only if there is a partition with odd sequence $\theta$ in $\mathcal{E}_k(\theta)$. In other words,
    \begin{equation}\label{eq:disjoint}
        \mathcal{E}_k(\gamma) = \bigsqcup_{\substack{\theta\vDash k \textnormal{ of length } p \\ w_{\theta}(\gamma) = w_k(\gamma)}} \mathcal{E}_{\theta}(\gamma).
    \end{equation}
\end{remark}

The sets $\mathcal{E}_{\theta}(\gamma)$ always have a unique maximal element in the dominance order; see \cite[Proposition~6.14]{TurekMullineuxArxiv25}. Furthermore, these elements come with an alternative description.

\begin{proposition}[\cite{TurekMullineuxArxiv25}, Proposition~1.6]\label{pr:equivalence max}
	Let $\mu$ be a partition with $p$-core $\gamma$ and odd sequence $\mathcal{\theta}$. Then all arm lengths of $p$-divisible hooks of $\mu$ are even if and only if $\mu$ is the maximal element of $\mathcal{E}_{\theta}(\gamma)$. In that case, $\mu$  is $p$-regular if and only if $\theta$ contains a zero.
\end{proposition}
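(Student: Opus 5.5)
The plan is to work throughout on the $p$-abacus of the canonical $\beta$-set $B_0(\mu)$. By Remark~\ref{remark: oddsequencecountsoddbeads}, the odd sequence counts the odd beads on each runner. A $p$-divisible hook corresponds to a pair $(f,b)$ with $f=b-ap$, $f\notin B$, $b\in B$. By Lemma~\ref{lemma:readinglegandarmlength off abacus}, its arm length is $\emp_B(f+1,b-1)$.

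The basic local computation is the effect of sliding the bead from $b$ to $f$, i.e.\ removing this $p$-divisible rim hook:
\begin{itemize}
\item every bead strictly between $f$ and $b$ loses one empty lower-numbered position, so its parity flips;
\item the moved bead stays on the same runner, and its parity changes exactly when the arm length is even.
\end{itemize}
All the combinatorics will be organised around this bookkeeping, which is how $\theta$ and the weight interact.

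\textbf{Step 1: odd arms prevent maximality.} Suppose $\mu$ has a $p$-divisible hook $(f,b)$ with odd arm length, chosen with $b-f$ minimal. I would modify the configuration between $f$ and $b$ locally to produce $\nu$ with the same $p$-core and odd sequence. Sliding $b$ to $f$ keeps the moved bead's parity but flips the beads in between. I would then compensate by a second slide, or by exchanging a suitable bead and gap within the window; minimality of $b-f$ should restrict the window enough to find one. The outcome should be that either
\begin{itemize}
\item $\nu$ has strictly smaller $p$-weight, contradicting $\mu\in\mathcal{E}_\theta(\gamma)$, or
\item $\nu$ has the same weight and $\nu\rhd\mu$.
\end{itemize}
The comparison uses the fact recalled after Example~\ref{ex:dominance}: partitions differing by swapping one rim hook are compared by the positions of the moved beads. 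Either way, $\mu$ is not the maximal element of $\mathcal{E}_\theta(\gamma)$.

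\textbf{Step 2: even arms determine the partition.} I would show that a partition with all $p$-divisible arm lengths even is determined by $(\gamma,\theta)$. To do this, reconstruct it greedily runner by runner from the top of the abacus of $\gamma$. The even-arm condition forces each bead that must become odd to be pushed down to the first admissible position on its runner, so there is no choice at any stage.

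The maximal element of $\mathcal{E}_\theta(\gamma)$ exists by \cite[Proposition~6.14]{TurekMullineuxArxiv25}, and by Step~1 it has all arms even. By uniqueness, every even-arm $\mu$ with invariants $(\gamma,\theta)$ equals this maximal element. This gives the equivalence, and it also shows such a $\mu$ automatically has minimal weight.

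\textbf{Step 3: $p$-regularity.} Here $\mu$ fails to be $p$-regular exactly when the abacus contains $p$ consecutive beads preceded by a gap, i.e.\ $p$ equal nonzero parts. I would show two things for even-arm $\mu$:
\begin{itemize}
\item If $\mu$ has $p$ equal nonzero parts, then, since these $p$ beads have the same parity and lie on all $p$ runners, the greedy description from Step~2 forces every runner to carry odd beads. This contradicts $\theta$ containing a zero.
\item Conversely, if every $\theta_i>0$, the lowest odd beads across the runners are forced by the even-arm condition into such a consecutive block.
\end{itemize}

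The main obstacle is Step~1. The compensating move must preserve the odd sequence on every runner simultaneously, which is delicate because the parity flips affect all runners between $f$ and $b$. I would first try taking the hook of minimal length among those with odd arm, and do a case analysis on whether the window contains a further movable bead on the runner of $b$.
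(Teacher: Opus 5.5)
Your overall logic is sound. Step~1 gives ``maximal $\Rightarrow$ all arms even'', and Step~2 together with the existence of the maximal element \cite[Proposition~6.14]{TurekMullineuxArxiv25} gives the converse. The paper does not reprove this proposition; it imports it from \cite{TurekMullineuxArxiv25}, and the ingredient it records as Lemma~\ref{le:A1runner} is exactly your Step~1. However, both load-bearing steps are only announced, not proved.

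\textbf{Step~1.} The whole content here is the compensating move, and you do not specify it. Your hints (a bead--gap exchange inside $(f,b)$, choosing $b-f$ minimal) point away from the move that works, which is algorithm A1. After sliding $b$ to $f=b-ap$, swap $b+i-ap$ and $b+i$ for $i=1,2,\dots$ until the next proper move. That second move occurs at some $(c-ap,c)$ with $c>b$, so it is not confined to the window. No minimality of $b-f$ is needed: Lemma~\ref{le:A1runner} applies to every odd-arm $p$-divisible hook.

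What makes A1 work is the invariant of Lemma~\ref{le:A1emp}: the number of gaps in the sliding window $[b+i-ap,b+i-1]$ stays equal to the arm length plus one, which is even. This cancels the parity flips runner by runner. It also forces termination: if $(f,b)$ contains a bead, the invariant keeps a bead in the window. If $(f,b)$ contains no bead, then $a$ is even and the single slide already preserves $\theta$ while lowering the weight. For $\mu\in\mathcal{E}_\theta(\gamma)$, a second up-move would lower the weight by $2a$, so the second move is a down-move, and $c>b$ gives $\nu\rhd\mu$ by the rim-hook comparison you quote.

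\textbf{Step~2.} This is where the ``if'' direction actually lives, and the greedy sketch does not establish it. The parity of a bead counts the gaps below it on \emph{all} runners, so the runners do not decouple. Pushing one bead down flips the parity of every bead it passes on the other runners, so the order of the pushes matters and ``the first admissible position on its runner'' is not well defined. Moreover, the even-arm condition constrains only the final configuration, so it cannot force choices at intermediate stages. Nothing in your sketch rules out two distinct even-arm partitions with the same $(\gamma,\theta)$, and Step~1 gives only the other implication. So you have no argument yet that an even-arm partition has minimal weight, let alone that it is maximal. A useful starting point: if $\pi(x)$ is the parity of the number of gaps at positions $\le x$, the hook $(f,b)$ has arm length $\equiv\pi(b)-\pi(f)$. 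Hence the even-arm condition says $\pi$ is constant along each runner between that runner's lowest gap and its highest bead.

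\textbf{Step~3.} This step needs no Step~2 and follows directly from the even-arm condition, but you must supply the mechanism.
\begin{enumerate}
\item Let $f$ be the highest gap below a block of $p$ consecutive beads, and suppose $g<f$ is the next gap. Then the bead among $f+1,\dots,f+p$ on the runner of $g$ gives a hook of arm length $1$, which is odd. So $f$ is the lowest gap, the block consists of odd beads on all runners, and $\theta$ has no zero.
\item Conversely, let $e_1<e_2$ be the two lowest gaps and suppose $e_2-e_1\le p$. Then the runner of $e_2$ carries no odd bead. Odd beads below $e_2$ lie strictly between $e_1$ and $e_2$, hence on other runners. An odd bead $b>e_2$ on that runner would give a hook $(e_2,b)$ of odd arm length. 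So if $\theta$ has no zero, then $e_2-e_1>p$ and $\mu$ has at least $p$ parts equal to $1$.
\end{enumerate}
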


\begin{example}\label{ex:max}
    Let $p=5$ and $\mu=(6,5^2,3,2,1)$. We saw in Example~\ref{ex:arm lengths} that all arm lengths of $p$-divisible hooks of $\mu$ are even. By Proposition~\ref{pr:equivalence max}, $\mu$ is the (unique) maximal element of $\mathcal{E}_{\theta}(\gamma)$, where $\theta = (1,0,1,1,1)$ is the odd sequence of $\mu$ and $\gamma=(3,2,1^2)$ is the $p$-core of $\mu$. Since the $p$-weight of $\mu$ is $3$, we have $w_{\theta}(\gamma)=3$. The whole set $\mathcal{E}_{\theta}(\gamma)$ is the first set in Example~\ref{example:p=5gamma=3211allcolumns}. Note that $\mu$ is $p$-regular and $\theta$ contains a zero; this agrees with the final part of Proposition~\ref{pr:equivalence max}.
\end{example}

We now temporarily return to the assumption that $p$ is an odd prime. Proposition~\ref{pr:equivalence max} (or more precisely, its final part) says that the first part of Theorem~\ref{theorem:maintheorem} is true. It also makes the fact that Theorem~\ref{theorem:even arms} and Theorem~\ref{theorem:maintheorem} concern the same columns of the decomposition matrices of symmetric groups apparent. We can therefore choose and prove only one of them; we choose Theorem~\ref{theorem:maintheorem}.

We can also use the final part of Proposition~\ref{pr:equivalence max} to properly justify our earlier claim from Section~\ref{sec:intro} that Theorem \ref{theorem:maintheorem} implies that the sets $\mathcal{X}_1,\mathcal{X}_2,\dots,\mathcal{X}_c$ in Theorem~\ref{theorem:GWThm1.1} are all of the form $\mathcal{E}_{\mathcal{\theta}}(\gamma)$. In fact, we show that they are the sets on the right-hand side of \eqref{eq:disjoint}.

\begin{lemma}\label{le:GWCorollary}
    The sets $\mathcal{X}_1,\mathcal{X}_2,\dots,\mathcal{X}_c$ in Theorem~\ref{theorem:GWThm1.1} are the sets on the right-hand side of \eqref{eq:disjoint}, that is, sets $\mathcal{E}_{\theta}(\gamma)$ with $\theta\vDash k$ of length $p$ such that $w_{\theta}(\gamma) = w_k(\gamma)$. Furthermore, all such $\theta$ contain a zero.
\end{lemma}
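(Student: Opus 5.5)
We would deduce the lemma from Theorem~\ref{theorem:maintheorem} and the decomposition \eqref{eq:disjoint}; the theorem is assumed here and proved later.

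\emph{Step 1: every $\theta$ on the right of \eqref{eq:disjoint} contains a zero.} Suppose $\theta\vDash k$ has all $\theta_i\geq 1$ and $w_\theta(\gamma)=w_k(\gamma)$. We use the extra hypothesis of Theorem~\ref{theorem:GWThm1.1}. For $k\geq p$, let $\theta'=\theta-\bm{1}$, which is a composition of $k-p$. We will show directly that
\[
w_{\theta'}(\gamma)\leq w_\theta(\gamma)-1.
\]
Take the maximal element $\mu$ of $\mathcal{E}_\theta(\gamma)$. By Proposition~\ref{pr:equivalence max}, $\mu$ is not $p$-regular, so some $p$ consecutive parts of $\mu$ are equal. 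On the abacus these are $p$ consecutive beads with no gaps between them, all odd, one on each runner. We then remove a rim $p$-hook that removes exactly one odd bead from each runner. The natural candidate is to slide the lowest bead of the block upward by $p$ when this is possible; otherwise we argue with the appropriate $\beta$-shift. This produces a partition with odd sequence $\theta'$ and weight $w_\theta(\gamma)-1$.

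Combined with the standard bound $w_{k-p}(\gamma)\geq w_k(\gamma)-1$, this gives $w_{k-p}(\gamma)=w_k(\gamma)-1$, contradicting the hypothesis. (That standard bound holds because adding one odd bead to each runner costs at most one $p$-hook; we would invoke it as in Giannelli--Wildon.) For $k<p$ there is nothing to prove, since a composition of $k<p$ of length $p$ automatically contains a zero.

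The main obstacle is the claim that $p$ equal parts can be traded for one $p$-hook while reducing every $\theta_i$ by exactly one. We would first try this via the equivalence in Proposition~\ref{pr:equivalence max}: non-regularity forces a $p$-hook with arm length $0$ consisting of a vertical strip, and removing it lowers $p$ consecutive rows by one each. Those rows have ends on all $p$ residues and change from odd to even, so each $\theta_i$ drops by one. We must also check that no other parity changes occur, and the vertical strip ensures this.

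\emph{Step 2: identifying the $\mathcal{X}_i$.} Each $\theta$ on the right of \eqref{eq:disjoint} contains a zero, so by Theorem~\ref{theorem:maintheorem} the maximal element $\mu_\theta$ of $\mathcal{E}_\theta(\gamma)$ is $p$-regular. Its column of the decomposition matrix has ones exactly on $\mathcal{E}_\theta(\gamma)$. These sets partition $\mathcal{E}_k(\gamma)$ by \eqref{eq:disjoint}. In Theorem~\ref{theorem:GWThm1.1}, the column labelled $\nu_i$ has support $\mathcal{X}_i$, which contains $\nu_i$. Since $\nu_i$ lies in some $\mathcal{E}_\theta(\gamma)$, maximality gives $\mu_\theta\unrhd\nu_i$. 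Conversely, $\mu_\theta$ lies in some $\mathcal{X}_j$, so $\nu_j\unrhd\mu_\theta$. Since $d_{\lambda\mu}\neq 0$ forces $\mu\unrhd\lambda$, comparing supports of columns (the support of column $\nu_i$ equals $\mathcal{X}_i$, and the support of column $\mu_\theta$ equals $\mathcal{E}_\theta(\gamma)$) yields $\nu_i=\mu_\theta$. Hence $\mathcal{X}_i=\mathcal{E}_\theta(\gamma)$, and the two partitions coincide.
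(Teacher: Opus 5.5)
Your Step 1 takes a genuinely different route from the paper and can be made to work, but as written both steps have a gap.

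\textbf{Step 1: the parity of the repeated part.} Everything depends on the $p$ equal parts of $\mu$ being odd, and you assert this without proof. It is not automatic for a non-$p$-regular partition. If the common value were even, removing the vertical strip would \emph{raise} every $\theta_i$ by one rather than lower it. It is true for the maximal element, via the even-arm characterisation in Proposition~\ref{pr:equivalence max}:
\begin{enumerate}
\item Take the last $p$ rows of a maximal run of parts equal to $a$. Their beads occupy $y-p+1,\dots,y$ and position $y-p$ is empty. This also makes your slide of the lowest bead from $y$ to $y-p$ always possible, so no $\beta$-shift is needed.
\item Suppose $a\geq 2$. Then there is a second empty position below $y-p+1$; let $z<y-p$ be the largest one.
\item Take the unique $b\in[y-p+1,y]$ with $b\equiv z \pmod p$. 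The only empty position strictly between $z$ and $b$ is $y-p$, so $(z,b)$ is a $p$-divisible hook with arm length $1$, a contradiction.
\end{enumerate}
Hence $a=1$. With that, $w_{\theta-\bm{1}}(\gamma)\leq w_\theta(\gamma)-1$ follows, and your comparison with $w_{k-p}(\gamma)\geq w_k(\gamma)-1$ (add $p$ parts equal to $1$) finishes Step 1.

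\textbf{Step 2: matching the two set partitions.} From $\nu_j\unrhd\mu_\theta\unrhd\nu_i$ with possibly $j\neq i$ you cannot conclude $\nu_i=\mu_\theta$. Comparing supports only helps once you know $\mu_\theta$ is one of the labels $\nu_l$; then $\nu_i\in\mathcal{E}_\theta(\gamma)=\mathcal{X}_l$ forces $l=i$. You need an extremal or global argument, and either of these works:
\begin{enumerate}
\item Process $\mathcal{E}_k(\gamma)$ downwards, as the paper does. A dominance-maximal element of the not-yet-matched part is simultaneously the maximum of the $\mathcal{X}$-set and of the $\mathcal{E}_\theta(\gamma)$ containing it, since both remainders are unions of parts. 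So both theorems describe the same column.
\item Observe that $\sum_i\psi^{\nu_i}=\sum_{\lambda\in\mathcal{E}_k(\gamma)}\chi^\lambda=\sum_\theta\psi^{\mu_\theta}$. Linear independence of the characters of the projective indecomposables then gives $\{\nu_i\}=\{\mu_\theta\}$, after which the supports match.
\end{enumerate}

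\textbf{Comparison with the paper.} The paper never uses the hypothesis $w_{k-p}(\gamma)\neq w_k(\gamma)-1$ directly. Inside its top-down induction it identifies the maximal remaining partition with some $\nu_{t+1}$, which Theorem~\ref{theorem:GWThm1.1} already says is $p$-regular. The last part of Proposition~\ref{pr:equivalence max} then gives the zero in $\tilde\theta$, so zero-containment comes for free. Your Step 1 instead gives a self-contained combinatorial reason why the Giannelli--Wildon hypothesis forces a zero, independent of their representation-theoretic conclusion. The cost is the extra lemma that repeated parts of a maximal element equal $1$.
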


\begin{proof}
Suppose that we have already shown that $\mathcal{X}_1,\mathcal{X}_2,\dots,\mathcal{X}_t$ coincide with some of the sets on the right-hand side of \eqref{eq:disjoint}, and that all the corresponding $\theta$ contain a zero. We are done if $\mathcal{E}_k(\gamma) \setminus \bigsqcup_{i=1}^t \mathcal{X}_i$ is empty. Otherwise, take a maximal partition $\mu$ with respect to the dominance order in this set and without loss of generality assume that it lies in $\mathcal{X}_{t+1}$.

By Theorem~\ref{theorem:GWThm1.1}, $\mu\unlhd \nu_{t+1}$, where $\nu_{t+1}$ is the unique maximal element of $\mathcal{X}_{t+1}$ (which is $p$-regular), and our choice of $\mu$ forces $\mu = \nu_{t+1}$. By \eqref{eq:disjoint}, we know that $\mu\in \mathcal{E}_{\tilde{\theta}}(\gamma)\subseteq \mathcal{E}_k(\gamma) \setminus \bigsqcup_{i=1}^t \mathcal{X}_i$ for some $\tilde{\theta}\vDash k$ of length $p$ such that $w_{\tilde{\theta}}(\gamma) = w_k(\gamma)$. Again, the choice of $\mu$ forces $\mu$ to be the maximal element of $\mathcal{E}_{\tilde{\theta}}(\gamma)$ (with respect to the dominance order); thus, the final part of Proposition~\ref{pr:equivalence max} shows that $\theta$ contains a zero. Theorem~\ref{theorem:maintheorem} thus applies to $\tilde{\theta}$ and $\gamma$ and shows that the non-zero entries in the column of the decomposition matrix labeled by $\mu$ are in rows given by $\mathcal{E}_{\tilde{\theta}}(\gamma)$. Since $\mu = \nu_{t+1}$, Theorem~\ref{theorem:GWThm1.1} shows that these rows are also given by $\mathcal{X}_{t+1}$; hence $\mathcal{X}_{t+1} = \mathcal{E}_{\tilde{\theta}}(\gamma)$, as required.
\end{proof}

\begin{remark}\label{re:newcolumns}
    Note that when $w_{\theta}(\gamma) > w_k(\gamma)$ for some $\theta\vDash k$ of length $p$ which contains a zero, then the decomposition column from Theorem~\ref{theorem:maintheorem} does \emph{not} come from the columns of Giannelli--Wildon. We see examples of such columns in Section~\ref{subsection:Examples}.
\end{remark}

The odd sequence can be interpreted algebraically when $p$ is an odd prime. We start with a motivational example, before stating the full result. Recall that in our notation $C(\gamma) + \bm{w}$ is the $p$-content of partitions with $p$-core $\gamma$ and $p$-weight $w$. In the rest of this section, $F$ is a residue field of a discrete valuation ring $\mathcal{O}$, where $F$ has characteristic $p$ and $\mathcal{O}$ has characteristic $0$.

\begin{example}\label{ex:foulkes}
    Let $p=5$, $\tau = (4,4,4,3,3)$ and $\rho=(5,4,5,4,4)$. To motivate the next lemma, we compute the ordinary character of the $\mathcal{O}S_{22}$-module
    \[
    V=\left(\left(H^{(2^9)}_{\tau} \boxtimes \sgn \right)\Ind^{S_{22}}\right)_{\rho}.
    \]
    The ordinary character of $H^{(2^9)}_{\tau}$ is the sum of $\chi^{\lambda}$ over even partitions $\lambda$ with $p$-content $\tau=(4,4,4,3,3)$; there are four of them (drawn in green in Figure~\ref{fig:even}):
    \[
    (6,4^2,2^2), (6,4,2^4), (4^4,2), (4^2,2^5).
    \]
    By Pieri's rule (Theorem~\ref{thm:Pieri}), we obtain the ordinary character of $\left(H^{(2^9)}_{\tau} \boxtimes \sgn \right)\Ind^{S_{22}}$ by summing $\chi^{\lambda}$ over partitions $\lambda$ obtained from one of our four even partitions by adding a node to $4$ of its (distinct) rows. Finally, the ordinary character of $V$ is obtained in the same way, but we require the $4$ added nodes to have $p$-residues $0,2,3$ and $4$ (so the final partitions have $p$-content $\rho=(5,4,5,4,4) = \tau + (1,0,1,1,1)$). For our four even partitions, we can add these $4$ nodes in three, two, two and three ways, respectively; one way for each of them is in Figure~\ref{fig:even}. The $10$ obtained partitions are precisely the elements of $\mathcal{E}_{\theta}(\gamma)$ where $\gamma=(3,2,1^2)$ and $\theta=(1,0,1,1,1)$ (and $w_{\theta}(\gamma)=3$); see the first set in Example~\ref{example:p=5gamma=3211allcolumns}. Note that if we let $w=w_{\theta}(\gamma)$, Figure~\ref{fig:content} shows that $C(\gamma) +\bm{w} - \theta=\tau$ and $C(\gamma) +\bm{w}=\rho$.
\end{example}

\begin{figure}[ht]
    \centering
    \ytableausetup{centertableaux}
    \ytableaushort{012340,40123,34012,234,12,0}
*[*(green)]{6,4,4,2,2}
\qquad
\ytableaushort{012340,4012,340,234,123,012}
*[*(green)]{6,4,2,2,2,2}
\qquad
\ytableaushort{01234,40123,34012,2340,12,0}
*[*(green)]{4,4,4,4,2}
\qquad
\ytableaushort{0123,4012,340,234,123,012,40}
*[*(green)]{4,4,2,2,2,2,2}
    \caption{Even partitions with $5$-content $(4,4,4,3,3)$ (drawn green), each with $4$ added nodes in different rows with $5$-residues $0,2,3$ and $4$.}
    \label{fig:even}
\end{figure}

\begin{lemma}\label{le:FoulkesModules}
    Let $\gamma$ be a $p$-core partition, $\theta$ an integer sequence of length $p$ with a non-negative sum of entries $k\geq 0$ and $w$ a non-negative integer such that $m = (|\gamma|+pw-k)/2$ is a non-negative integer. The ordinary character of the $\mathcal{O}S_{2m+k}$-module $\left(\left(H^{(2^m)}_{C(\gamma) +\bm{w} - \theta} \boxtimes \sgn \right)\Ind^{S_{2m+k}}\right)_{C(\gamma) + \bm{w}}$ is the sum of $\chi^{\lambda}$ labeled by partitions $\lambda$ with $p$-core $\gamma$, $p$-weight $w$ and odd sequence $\theta$. In particular, this is zero if $\theta$ has a negative entry.
\end{lemma}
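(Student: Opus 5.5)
The plan is to compute the character in stages: first the block component of the Foulkes module, then Pieri's rule for the induction, then the final block projection, and finally a combinatorial identification of the surviving partitions.

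\emph{Step 1 (Foulkes character).} The ordinary character of $H^{(2^m)}$ is the sum of $\chi^{\nu}$ over even partitions $\nu\vdash 2m$. Lifting over $\mathcal{O}$ and applying Theorem~\ref{thm:NakayamaO}, the block component $H^{(2^m)}_{\tau}$ with $\tau=C(\gamma)+\bm{w}-\theta$ has character equal to the sum of $\chi^{\nu}$ over even $\nu\vdash 2m$ with $C(\nu)=\tau$. If $\tau$ is not a $p$-content, this sum is empty, which is consistent with the definition.

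\emph{Step 2 (Pieri and projection).} By Theorem~\ref{thm:Pieri}, inducing $H^{(2^m)}_{\tau}\boxtimes\sgn$ gives the character $\sum_{\nu}\sum_{\lambda\in\Add_k(\nu)}\chi^{\lambda}$. Projecting onto the block $C(\gamma)+\bm{w}$ keeps exactly those $\lambda$ with $C(\lambda)=C(\gamma)+\bm{w}$, that is, by Theorem~\ref{thm: p-residues}, those with $p$-core $\gamma$ and $p$-weight $w$. Given $C(\nu)=\tau$, this condition is equivalent to the added horizontal strip $[\lambda]\setminus[\nu]$ having residue multiset equal to $\theta$: exactly $\theta_i$ added nodes of residue $i$. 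So it remains to prove a purely combinatorial bijection. The pairs $(\nu,\lambda)$ with $\nu$ even, $\lambda\in\Add_k(\nu)$ and added-node residue count $\theta$ should correspond bijectively to partitions $\lambda$ with $C(\lambda)=C(\gamma)+\bm{w}$ and $O(\lambda)=\theta$, each such $\lambda$ arising exactly once.

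\emph{Step 3 (the key bijection).} Given $\lambda$, there is at most one even $\nu$ with $\lambda\in\Add_k(\nu)$ and $|\lambda|-|\nu|=k$. Since nodes are added to distinct rows, one per row, each $\lambda_j$ is either $\nu_j$ or $\nu_j+1$. Because $\nu_j$ must be even, this forces $\nu_j=\lambda_j$ if $\lambda_j$ is even and $\nu_j=\lambda_j-1$ if $\lambda_j$ is odd. Hence the number of added nodes equals the number of odd parts of $\lambda$, which must be $k$.

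Conversely, define $\nu$ from $\lambda$ by subtracting $1$ from each odd part. The main check is that this $\nu$ is a partition. If $\lambda_j$ is even and $\lambda_{j+1}$ is odd, then $\lambda_j>\lambda_{j+1}$, so $\lambda_j\ge\lambda_{j+1}+1>\lambda_{j+1}-1$. All other cases are immediate. The removed nodes lie in distinct rows, and $[\lambda]\setminus[\nu]$ is a horizontal strip, so $\lambda\in\Add_k(\nu)$.

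The removed node in an odd row $j$ is $(j,\lambda_j)$, whose residue is $\lambda_j-j \bmod p$. So the residue count of the removed nodes is exactly $O(\lambda)$. Therefore the pair condition (residue count $\theta$, and $C(\lambda)=C(\gamma)+\bm{w}$, hence $C(\nu)=\tau$ automatically) is equivalent to $O(\lambda)=\theta$ together with $\lambda$ having $p$-core $\gamma$ and weight $w$. This yields the stated character, with multiplicity one for each such $\lambda$.

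\emph{Negative entries.} If some $\theta_i<0$, no horizontal strip can have a negative number of nodes of residue $i$, so the sum is empty and the character is zero. Equivalently, odd sequences have non-negative entries.

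I expect no serious obstacle. The only care needed is in Step 3, in verifying that $\nu$ is a partition and that the correspondence is bijective with no multiplicities, both of which are handled by the uniqueness argument above.
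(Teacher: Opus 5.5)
Your proposal is correct and follows essentially the same route as the paper: take the block component of the multiplicity-free Foulkes character, apply Pieri's rule, project onto $C(\gamma)+\bm{w}$, and observe that the added nodes are exactly the right-most nodes of the odd rows, so their residue count is $O(\lambda)$ and the conditions ``size $2m+k$ with exactly $k$ odd parts'' are automatic. You simply spell out the uniqueness of $\nu$ that the paper leaves implicit. One cosmetic slip: when checking that $\nu$ is a partition, the case that actually needs your parity observation is $\lambda_j$ odd, $\lambda_{j+1}$ even (to get $\lambda_j-1\geq\lambda_{j+1}$), whereas the case you wrote out ($\lambda_j$ even, $\lambda_{j+1}$ odd) is the trivial one; the same fact, that $\lambda_j>\lambda_{j+1}$ when the parities differ, settles it.
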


    \begin{proof}
        We build the desired character from smaller pieces. The ordinary character of $H^{(2^m)}$ is a sum of $\chi^{\lambda}$ over even partitions $\lambda\vdash 2m$. Theorem~\ref{thm:NakayamaO} shows that if we pass to the ordinary character of $H^{(2^m)}_{C(\gamma)+\bm{w} - \theta}$ we only consider those $\lambda$ with $p$-content $C(\gamma)+\bm{w} - \theta$. We can apply Pieri's rule (Theorem~\ref{thm:Pieri}) to compute the ordinary character of $\left(H^{(2^m)}_{C(\gamma)+\bm{w} - \theta} \boxtimes \sgn \right)\Ind^{S_{2m+k}}$; it is the sum of $\chi^{\lambda}$ with $\lambda$ a partition of size $2m+k$ with precisely $k$ odd parts $\lambda_j$ such that the $p$-content of $\lambda$ without the right-most nodes in its odd-length rows is $C(\gamma)+\bm{w} - \theta$.

        Finally, passing to the block labeled by $C(\gamma)+\bm{w}$, we restrict these $\lambda$ to those of $p$-content $C(\gamma) + \bm{w}$. In other words, the desired ordinary character is the sum of $\chi^{\lambda}$ over $\lambda$ such that
        \begin{enumerate}
            \item $\lambda$ has size $2m+k$ and precisely $k$ odd parts $\lambda_j$; and
            \item $\lambda$ has $p$-content $C(\gamma)+\bm{w}$; and
            \item $\lambda$ has odd sequence $(C(\gamma)+\bm{w})-(C(\gamma)+\bm{w} - \theta) = \theta$.
        \end{enumerate}
        The second condition can be rephrased as `$\lambda$ has $p$-core $\gamma$ and $p$-weight $w$'. Thus, these conditions match the desired description if we show that the first condition is implicit from the remaining two. By our assumption $|\gamma|+pw = 2m+k$, and thus any partition with $p$-core $\gamma$ and $p$-weight $w$ has size $2m+k$. Furthermore, any partition with odd sequence $\theta$ has $k=|\theta|$ odd parts; thus, the first condition is redundant, and the result follows.
    \end{proof}

We deduce an alternative description of the set $\mathcal{E}_{\theta}(\gamma)$.

\begin{corollary}\label{cor:FoulkesModules}
    Let $\gamma$ be a $p$-core partition, $\theta$ a composition of length $p$ of size $k$ and $w=w_{\theta}(\gamma)$. Then $m = (|\gamma|+pw-k)/2$ is a non-negative integer, and the ordinary character of the $\mathcal{O}S_{2m+k}$-module $\left(\left(H^{(2^m)}_{C(\gamma) +\bm{w} - \theta} \boxtimes \sgn \right)\Ind^{S_{2m+k}}\right)_{C(\gamma) +\bm{w}}$ is
    \[
    \sum_{\lambda\in \mathcal{E}_{\theta}(\gamma)}\chi^{\lambda}.
    \]
\end{corollary}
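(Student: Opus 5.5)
This is a direct specialisation of Lemma~\ref{le:FoulkesModules}, so the plan is to check its hypotheses and then identify the resulting set of partitions with $\mathcal{E}_{\theta}(\gamma)$.

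\textbf{Step 1: $m$ is a non-negative integer.} By Lemma~\ref{lem:omegaOiswelldefined}, there is a partition $\lambda$ with $p$-core $\gamma$ and odd sequence $\theta$. Hence $w=w_{\theta}(\gamma)$ is well defined, and some partition $\lambda\in\mathcal{E}_{\theta}(\gamma)$ exists. This $\lambda$ has size $|\gamma|+pw$. Its odd parts are exactly those counted by $\theta$, so it has precisely $k=|\theta|$ odd parts. The size of a partition has the same parity as its number of odd parts, so $|\gamma|+pw\equiv k \pmod 2$. Moreover, having $k$ odd parts (each at least $1$) forces $|\lambda|\geq k$. Therefore $m=(|\gamma|+pw-k)/2$ is a non-negative integer.

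\textbf{Step 2: apply Lemma~\ref{le:FoulkesModules}.} Since $\theta$ is a composition, it is an integer sequence of length $p$ with non-negative sum $k$. Together with Step 1, all hypotheses of Lemma~\ref{le:FoulkesModules} hold for $\gamma$, $\theta$ and $w$. The lemma then says the ordinary character of the given module is the sum of $\chi^{\lambda}$ over all partitions $\lambda$ with $p$-core $\gamma$, $p$-weight $w_{\theta}(\gamma)$ and odd sequence $\theta$.

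\textbf{Step 3: identify the set.} By definition, the partitions described in Step 2 form exactly $\mathcal{E}_{\theta}(\gamma)$, which gives the stated character.

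The only point needing any care is the parity and non-negativity check in Step 1. Both follow from the existence of a single witness partition, which Lemma~\ref{lem:omegaOiswelldefined} provides.
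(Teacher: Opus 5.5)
Your proposal is correct and follows the paper's proof: the paper likewise takes a partition of size $|\gamma|+pw$ with odd sequence $\theta$, uses that it has exactly $k$ odd parts to conclude that $|\gamma|+pw-k$ is a non-negative even integer, and then reads off the character directly from Lemma~\ref{le:FoulkesModules}. Your formulation (size $\equiv$ number of odd parts modulo $2$, and size $\geq k$) is simply a more explicit way of stating the paper's step of removing one node from each odd part to obtain an even partition of size $|\gamma|+pw-k$.
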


\begin{proof}
    We only need to show that $m = (|\gamma|+pw-k)/2$ is a non-negative integer; the rest of the statement will immediately follow from Lemma~\ref{le:FoulkesModules}. From the definition of $w=w_{\theta}(\gamma)$, there is a partition $\lambda$ of size $|\gamma|+pw$ with odd sequence $\theta$. In turn, $\lambda$ has exactly $k = |\theta|$ odd parts, and their removal results in an even partition of size $|\gamma|+pw-k$. The result follows.
\end{proof}

\subsection{Examples}
\label{subsection:Examples}
Here are a couple of applications of Theorem~\ref{theorem:maintheorem} (and Theorem~\ref{theorem:even arms}). To distinguish between partitions and odd sequences, here, we use square brackets for the latter. Suppose $\gamma=(3,1)$ and $p=3$. We can easily compute that:
$$\mathcal{E}_1(\gamma)=\{(6,1),(3,2^2)\}.$$
Notice that both those partitions have odd sequence $[0,0,1] \vDash 1$; thus $\mathcal{E}_1(\gamma) = \mathcal{E}_{[0,0,1]}(\gamma)$ as in \eqref{eq:disjoint}. Theorem~\ref{theorem:maintheorem} as well as Giannelli--Wildon's result (Theorem~\ref{theorem:GWThm1.1}) gives us a column in the decomposition matrix of $S_7$ with two entries equal to one. The remaining two odd sequences $\theta\vDash 1$, namely $[1,0,0]$ and $[0,1,0]$ only show up for larger values of $n$, and thus fall out of the scope of Giannelli--Wildon's result, and require an application of Theorem~\ref{theorem:maintheorem}. For example:

\begin{example}
    \label{example:742oddseq100e=3} Let $p=3$, $\gamma=(3,1)$ and $\theta=[1,0,0]$. Then:

    $$\mathcal{E}_\theta(\gamma)=\{(7,4,2),(6,5,2),(6,4,3),(6,4,2,1)\}$$
gives the nonzero entries (all $1$) in the column labeled by partition $(7,4,2)$ in the decomposition matrix of $S_{13}$, which one can confirm in \cite[p.~150]{JamesSymmetric78}.
\end{example}

Suppose instead we wish to apply Theorem~\ref{theorem:maintheorem}, where we are given a particular block and want to produce a column in the decomposition matrix.

\begin{example}
\label{example:p=5gamma=3211allcolumns} Suppose $p=5$ and $\gamma=(3,2,1^2)$ and we are interested in the block of $FS_{22}$ with $p$-core $\gamma$ and $p$-weight $3$. In accordance with \cite[Theorem~5.1]{HemmerTurekArms26}, there are four $p$-regular partitions with all arm lengths of $p$-divisible hooks even, to which Theorem~\ref{theorem:even arms} applies:
$$(6,5^2,3,2,1), (13,7,1^2), (11,4,3,2^2), (9,4,3^2,2,1). $$
Thus, we have four columns of the decomposition matrix of $S_{22}$ corresponding to these $p$-regular partitions. Their odd sequences $\theta$ (which necessary satisfy $w_\theta(\gamma)=3$), and the corresponding $\mathcal{E}_\theta(\gamma)$ are displayed below.

\begin{align*}
    \mathcal{E}_{[1,\,0,\,1,\,1,\,1]}(\gamma) =\; &\bigl\{
(6,5^2,3,2,1),\;
(6,5^2,2^2,1^2),\;
(6,4^2,2^2,1^4),\;
(6,4,3^4),\;
(6,4,3,2^3,1^3),\;
(5^3,4,2,1),\\
&(5,4,3,2^4,1^2),\;
(4^4,2,1^4),\;
(4^2,3^4,2),\;
(4^2,3^2,2^3,1^2)
\bigr\}\\
\\
\mathcal{E}_{[1,\,0,\,2,\,1,\,0]}(\gamma) =\; &\bigl\{
(13,7,1^2),\;
(13,5,3,1),\;
(11,9,1^2),\;
(11,5^2,1),\;
(9^2,3,1),\;
(9,7,5,1)
\bigr\}\\
\\
\mathcal{E}_{[2,\,0,\,0,\,0,\,0]}(\gamma) =\; &\bigl\{
(11,4,3,2^2),\;
(11,4,2^3,1),\;
(10,4,3,2^2,1),\;
(8,7,3,2^2),\;
(8,7,2^3,1),\;
(8,6,3,2^2,1)
\bigr\}\\
\\
\mathcal{E}_{[2,\,0,\,0,\,1,\,1]}(\gamma) =\; &\bigl\{
(9,4,3^2,2,1),\;
(9,4,3,2^2,1^2),\;
(8,5,3^2,2,1),\;
(8,5,3,2^2,1^2),\;
(8,4,3^3,1),\\
&(8,4,3,2^2,1^3)
\bigr\}
\end{align*}

We can compute that $w_2(\gamma)=1<3=w_{[2,0,0,0,0]}(\gamma)$, so, as commented in Remark~\ref{re:newcolumns} the column labeled by $(11,4,3,2^2)$ does not arise as $\mathcal{X}_j$ in any of Giannelli--Wildon's columns.
\end{example}

We end with a substantial example of Lemma~\ref{le:GWCorollary}.
\begin{example}
    \label{example:GWambiguityresolvedbyourresult}
Let $p=7$ and $\gamma=(3,1)$. Then $w_4(\gamma)=2$. We compute that $\mathcal{E}_4(\gamma)$ consists of twelve partitions of $18$ in the block with $p$-weight $2$ and $p$-core $\gamma$. They are:
\begin{alignat*}{3}
\lambda_1 &= (10,3,2,1^3) &\qquad
\lambda_5 &= (7,5^2,1) &\qquad
\lambda_9 &= (4^2,3^2,2,1^2) \\[6pt]
\lambda_2 &= (10,2^2,1^4) &\qquad
\lambda_6 &= (7,4^2,1^3) &\qquad
\lambda_{10} &= (4^2,2^3,1^4) \\[6pt]
\lambda_3 &= (9,4,2,1^3) &\qquad
\lambda_7 &= (5^3,3) &\qquad
\lambda_{11} &= (3^4,2^3) \\[6pt]
\lambda_4 &= (8,4,2,1^4) &\qquad
\lambda_8 &= (5,4^2,3,1^2) &\qquad
\lambda_{12} &= (3^2,2^5,1^2)
\end{alignat*}
We compute the first four have odd sequence $\theta_1=[0,1,1,1,1,0,0]$, the second four have odd sequence $\theta_2=[0,0,1,1,1,0,1]$ and the final four have odd sequence $\theta_3=[1,1,1,0,0,0,1]$. Thus $\mathcal{E}_4(\gamma) = \mathcal{E}_{\theta_1}(\gamma) \sqcup \mathcal{E}_{\theta_2}(\gamma) \sqcup \mathcal{E}_{\theta_3}(\gamma)$ is the set partition asserted by Giannelli--Wildon (in Theorem~\ref{theorem:GWThm1.1}) and described by Lemma~\ref{le:GWCorollary}; and thus we actually obtain three columns in the decomposition matrix of $S_{18}$ labeled by $(10,3,2,1^3), (7,5^2,1)$ and $(4^2,3^2,2,1^2)$. One can confirm these match the known decomposition numbers for blocks of $p$-weight two \cite{RichardsDecomposition96}.
\end{example}

\section{Algorithms A1 and A2}

\label{section:Algorithms} As mentioned earlier, the ordinary characters of twisted Foulkes modules $H^{(2^m; k)}$ are multiplicity-free, which means, by \eqref{eq:projective}, that any indecomposable projective summands they contain must correspond to columns of the decomposition matrix containing only zeros and ones. A crucial tool in our proof will be the Jantzen--Schaper formula, which can give bounds on decomposition numbers. This formula is especially powerful with the additional information that decomposition numbers in a chosen column are all zero or one. In this section, we collect some combinatorial information about certain bead moves on the abacus that let us move between any two partitions in the set $\mathcal{E}_\theta(\gamma)$. Much of this information is taken from \cite[Section~6]{TurekMullineuxArxiv25} where a more general approach is taken. In particular, we use the algorithms A1 and A2 defined there, restricting to parameter $d=2$ (with a slight alteration in A1). Throughout this section, $p>1$ is an odd integer.

Suppose that $B\subseteq \mathbb{Z}$. We say a \textit{swap of $i$ and $j$ of $B$} changes $B$ to a new set by replacing $i$ and $j$ by $j$ and $i$, respectively. If both or none of $i$ and $j$ lie in $B$, then $B$ does not change. If only one, say $i$, lies in $B$, then $B$ is replaced by $\left( B \setminus\left\lbrace i\right\rbrace\right)  \cup \left\lbrace j\right\rbrace  $. In such a case, if $i>j$, we call this an \textit{up-move}, otherwise, we say it is a \textit{down-move}. We refer to up- and down-moves as \textit{proper moves}. The words `up' and `down' are chosen to match the corresponding slides of a bead on the abacus with $|i-j|$ runners, where $B$ is a $\beta$-set.
 	
Observe that if $|i-j|=ap$ for some positive integer $a$ and $B=B_r(\lambda)$ for a partition $\lambda$, then the swap of $i$ and $j$ of $B$ results in a set of the form $B_r(\mu)$ for some partition $\mu$ with the same $p$-core as $\lambda$. Moreover, in such a case, the $p$-weight of $\mu$ is the $p$-weight of $\lambda$ \emph{decreased by $a$} if the swap was an up-move, the $p$-weight of $\lambda$ \emph{increased by $a$} if the swap was a down-move, and the $p$-weight of $\lambda$ otherwise.
 	
Recall that $\mathcal{B}=(B,u,v)$ is a hook triple if $B$ is a $\beta$-set of some partition $\lambda$ and integers $u<v$ are such that $u\notin B$ and $v\in B$. We can now define the first algorithm A1. Let $\mathcal{B}=(B,b-ap,b)$ be a hook triple (so $a>0$). The algorithm A1 applied to $\mathcal{B}$ proceeds as follows. In turns, for $i= 0,1,2,\dots$ we swap $b+i-ap$ and $b+i$ of $B$ and terminate the process after two proper moves. So at step $i=0$ we do a proper swap of $b-ap$ and $b$, and then we may or may not do a second proper move in higher numbered positions.

The algorithm A2 is similar. We again start with a hook triple $\mathcal{B}=(B,b-ap,b)$. The algorithm A2 applied to $\mathcal{B} =(B,b-ap,b)$ proceeds as follows. In turns, for $i= 0,1,2,\dots$ we swap $b-i-ap$ and $b-i$ of $B$ and terminate the process after two proper moves. So again at step $i=0$ we swap $b-ap$ and $b$, and then we may or may not do a second proper move in lower numbered positions.

If the algorithm A1 (or A2) terminates, then, since only two bead moves were made, the final set is a $\beta$-set of some partition $\mu$ with the same $p$-core as the underlying partition $\lambda$ of the initial $\beta$-set $B$. Moreover, if the final swap of the algorithm is an up-move, then the $p$-weight of $\mu$ is less than the $p$-weight of $\lambda$ by $2a$. If the final swap is a down move, the $p$-weights are the same.

In terms of the Young diagram, both algorithms either remove two rim hooks of size $ap$ (if the final swap is an up-move) or remove one rim hook of size $ap$ and add it in a new place (if the final swap is a down-move). An example of the algorithms is in Figure~\ref{fig:algorithms}.

\begin{figure}[ht]
    \centering
    \[
     \abacus(lmmmr,bbbbb,bbbbn,nnbbb,onbnn,bnnnn) \xrightarrow{\textnormal{A1}} \abacus(lmmmr,bbbbb,bbbbn,bnbnb,nnbon,bnnnn)
    \]
    \vspace{1cm}
    \[
     \abacus(lmmmr,bbbbb,bbnbb,nnbnb,nnbob,bnnnn) \xrightarrow{\textnormal{A2}} \abacus(lmmmr,bbbbb,bbobb,nnnbb,nnbnb,bnnnn)
    \]
    \caption{Let $p=5$. The diagram shows the effects of algorithms A1 and A2 applied to hook triples $\mathcal{B} =(B,b-ap,b)$, where $B$ is a $\beta$-set of partition $(6,4,3^4)$ and $(6^4,4,3,1^2)$ (drawn on the left), respectively, $b$ is the position of the highlighted bead and $b-ap$ is immediately above it (so $a=1$). The resulting partitions are $(6,5^2,3,2,1)$ and $(6^2,5,3^2)$, respectively. In the first case, A1 performs one up-move and one down-move, while in the second case, A2 performs two up-moves (the highlighted beads are those that move). Observe that, applying A2 to $\mathcal{B} =(B,b-ap,b)$, where $B$ is the $\beta$-set on the right on the first line, $b$ is the position of highlighted bead and $b-ap$ is immediately above it (so $a=1$), has the inverse effect to A1.}
    \label{fig:algorithms}
\end{figure}

 The following is a helpful observation. We write $B_i$ for the state of the set in the algorithm A1 or A2 just before the swap of $b\pm i -ap$ and $b \pm i$, and, if $t$ is the number of swaps (assuming that the algorithm terminates), we write $B_t$ for the final set (reached after the swap of $b\pm (t-1) -ap$ and $b \pm (t-1)$).  

 	\begin{lemma}\label{le:A1emp}
 		Let $\mathcal{B}=(B,b-ap,b)$ be a hook triple. Then
        \begin{enumerate}
            \item $\emp_{B_i}(b+ i-ap,b+ i-1)$ does not change throughout the algorithm A1 applied to $\mathcal{B}$.
            \item $\emp_{B_i}(b-i-ap+1,b-i)$ does not change throughout the algorithm A2 applied to $\mathcal{B}$.
        \end{enumerate}
 	\end{lemma}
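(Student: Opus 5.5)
The plan is a direct induction on the step index $i$, comparing the window at step $i$ with the window at step $i+1$. The point is that one swap of A1 (or A2) changes the set at exactly the two endpoints that are traded between consecutive windows, and exchanges their occupancy. Throughout, $B_0=B$, and $B_{i+1}$ is obtained from $B_i$ by the swap performed at step $i$; the claim is for all $i$ from $0$ up to the number $t$ of swaps.

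For part (1), the window at step $i$ is the interval $I_i=[b+i-ap,\,b+i-1]$, which has length $ap$. The next window is $I_{i+1}=[b+i+1-ap,\,b+i]$. These intervals share the block $J=[b+i+1-ap,\,b+i-1]$. The old window $I_i$ additionally contains the position $b+i-ap$, and the new window $I_{i+1}$ additionally contains $b+i$. The set $B_{i+1}$ arises from $B_i$ by swapping $b+i-ap$ and $b+i$; no other position changes, and in particular no position of $J$ changes. Hence
\[
\emp_{B_{i+1}}(b+i+1-ap,\,b+i)-\emp_{B_i}(b+i-ap,\,b+i-1)
\]
equals $[\,b+i\notin B_{i+1}\,]-[\,b+i-ap\notin B_i\,]$, where $[\cdot]$ is $1$ if the condition holds and $0$ otherwise. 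By the definition of a swap, position $b+i$ in $B_{i+1}$ has exactly the occupancy that position $b+i-ap$ had in $B_i$. This holds whether the swap is a proper move or leaves the set unchanged. So the difference is $0$, and induction on $i$ gives the claim.

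Part (2) is the mirror image. The window at step $i$ is $[b-i-ap+1,\,b-i]$ and the next window is $[b-i-ap,\,b-i-1]$. Their overlap $[b-i-ap+1,\,b-i-1]$ is untouched by the swap of $b-i-ap$ and $b-i$. The old window loses $b-i$ and the new window gains $b-i-ap$. After the swap, the occupancy of $b-i-ap$ in $B_{i+1}$ is the former occupancy of $b-i$ in $B_i$, so the count of empty positions is unchanged.

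There is no real obstacle here. The only care needed is the bookkeeping: the interval endpoints are shifted by one relative to the swapped pair. Because $ap\geq p\geq 3$, the two swapped positions are the only asymmetric positions between consecutive windows, and they never lie strictly inside the common overlap.
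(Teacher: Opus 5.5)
Your proof is correct and matches the paper's: compare consecutive windows, observe that they differ only by trading $b+i-ap$ for $b+i$ (resp.\ $b-i$ for $b-i-ap$), and use that the swap carries exactly that occupancy across. One small remark: your final appeal to $ap\geq 3$ is unnecessary, since the argument works for any $ap\geq 1$, but it does no harm.
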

 	
 	\begin{proof}
 		We focus on (1); the case of (2) is similar. Let us consider how $\emp_{B_i}(b+i-ap,b+i-1)$ and $\emp_{B_{i+1}}(b+i+1-ap,b+i)$ differ. As sets, we obtain $\left\lbrace z \in \mathbb{Z}: b+i+1-ap\leq z\leq b+i \right\rbrace$ from $\left\lbrace z \in \mathbb{Z}: b+i-ap\leq z\leq b+i-1 \right\rbrace$ by removing $b+i-ap$ and adding $b+i$. Since $B_{i+1}$ is obtained from $B_i$ by swapping $b+i-ap$ and $b+i$, we immediately conclude that $\emp_{B_i}(b+i-ap,b+i-1)=\emp_{B_{i+1}}(b+i+1-ap,b+i)$. The result follows. 
 	\end{proof}

For a hook triple $\mathcal{B} = (B,u,v)$, recall the shorthand $\emp(\mathcal{B})$ for $\emp_B(u+1,v)$ which is the arm length of the hook of $\lambda$ corresponding to the pair $(u,v)$ (see Lemma~\ref{lemma:readinglegandarmlength off abacus}).

In \cite{TurekMullineuxArxiv25}, algorithm A1 only swaps $b$ and $b-ap$ if there is no bead between $b-ap$ and $b-1$. However, this situation cannot occur in our applications, where A1 is applied to a hook triple $\mathcal{B}=(B,b-ap,b)$ where $B$ is a $\beta$-set of some $\lambda\in\mathcal{E}_{\theta}(\gamma)$ and $\emp(\mathcal{B})$ is odd; indeed, if there was no bead between $b-ap$ and $b-1$, then swapping $b$ and $b-ap$ would preserve the $p$-core and odd sequence but decrease the $p$-weight, contradicting $\lambda\in\mathcal{E}_{\theta}(\gamma)$. Thus, we can use the following results from \cite{TurekMullineuxArxiv25}.
 	
\begin{lemma}\cite[Lemma~6.4 and 6.5]{TurekMullineuxArxiv25}\label{le:A1runner}
	Let $\mathcal{B}=(B,b-ap,b)$ be a hook triple such that $B$ is the canonical $\beta$-set of a partition $\lambda$ lying in some set $\mathcal{E}_{\theta}(\gamma)$ and $\emp(\mathcal{B})$ is odd. The algorithm A1 applied to $\mathcal{B}$ terminates and returns the canonical $\beta$-set of a partition $\mu\in\mathcal{E}_{\theta}(\gamma)$ such that $\mu\rhd \lambda$.
\end{lemma}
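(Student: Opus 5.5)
The plan is to follow the run of A1 step by step, using Lemma~\ref{le:A1emp}(1) as an invariant. Write $s=\emp(\mathcal{B})=\emp_B(b-ap+1,b)$, which is odd. At step $i$ the window $[b+i-ap,b+i-1]$ always contains exactly $s$ empty positions. Step $0$ is an up-move: the bead at $b$ goes to $b-ap$. I claim the algorithm makes a second proper move, and that this move is a down-move.

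\emph{Termination.} Because $B$ is bounded above, once $b+i$ exceeds $\max B$ the position $b+i$ is empty. The window then eventually contains positions not in $B$. If $b+i-ap$ is a bead while $b+i$ is empty, the swap is a proper down-move. If the algorithm ran forever, then for large $i$ both positions would have to be empty, so the window would consist entirely of empty positions, giving $s=ap$. Two things rule this out. First, there is a bead between $b-ap$ and $b-1$, as the paper has just noted, and the invariant count should exclude the all-empty configuration. Second, a parity argument: at the first index where the top position is empty and the swap is improper, the window shifts without changing its empty count. Combining these, the process must stop after a second proper move at some step $t$.

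\emph{Weight and core.} Both moves are swaps at distance $ap$, so the $p$-core is preserved. If the second move is a down-move, the weight is preserved. If it is an up-move, the weight drops by $2a$. I must exclude the up-move case, and here I use the odd sequence. An up-move at step $t$ moves a bead from $b+t$ to $b+t-ap$. First I will compute the odd sequence of the result. A bead moved across the window $[b-ap,b]$ changes its count of empty positions below it by $s$, and the beads inside the window shift their parity by one. Lemma~\ref{le:A1emp} should ensure that the two moves together restore every bead's parity and runner count. Granting that, an up-move would produce a partition with the same core and odd sequence but smaller weight, contradicting $\lambda\in\mathcal{E}_\theta(\gamma)$. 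So the second move is a down-move and $\mu$ has weight $w_\theta(\gamma)$.

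\emph{The odd-sequence bookkeeping is the main obstacle.} For the down-move case I will check directly that $O(\mu)=\theta$. The bead removed from $b$ had $e$ empty positions below it. After the move it sits at $b-ap$ with $e-(ap-1-s')$ empties, where $s'$ counts suitably. Since $ap$ is odd and $s$ is odd, its parity is preserved, and it stays on the same runner. Beads strictly between $b-ap$ and $b$ gain one empty position below them. The second move, from $b+t-ap$ to $b+t$, removes that extra empty for beads between $b+t-ap$ and $b+t$. The invariant of Lemma~\ref{le:A1emp} is exactly what makes the intermediate beads between the two windows have their parity shifted and then restored, bead by bead. This careful parity tracking is where I expect most of the work to go.

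\emph{Dominance.} Finally, $\mu\rhd\lambda$. Both $\lambda$ and $\mu$ are obtained by adding an $ap$-rim hook to the common partition given by $B$ with $b$ replaced by $b-ap$. The hook triples are $(\cdot,b-ap,b)$ for $\lambda$ and $(\cdot,b+t-ap,b+t)$ for $\mu$, with $b<b+t$. By the comparison fact at the end of Section~\ref{subsection:Partitions}, this gives $\mu\rhd\lambda$.
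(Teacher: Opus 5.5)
\textbf{There is a genuine gap: the invariance of the odd sequence, which is the core of the lemma, is granted rather than proved.}

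Your skeleton is sound: show that A1 preserves the $p$-core and the odd sequence whichever way the second proper move goes, use minimality of $w_\theta(\gamma)$ to rule out a terminating up-move, and get $\mu\rhd\lambda$ from the rim-hook comparison with $b<c$. The termination and dominance parts are essentially fine. But the central claim, that $\Aone(\mathcal{B})$ has odd sequence $\theta$, is exactly what you ``grant'', and the mechanism you sketch for it is wrong.

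First, the invariant in Lemma~\ref{le:A1emp}(1) is $\emp_B(b-ap,b-1)=s+1$, not $s$, because $b-ap$ is empty. That even number is why the bead moved from $b$ to $b-ap$ keeps its parity. Your ``$ap$ odd'' is irrelevant, and false for even $a$. With the correct value, non-termination forces $s+1=ap$, i.e.\ no bead strictly between $b-ap$ and $b$, which minimality excludes. Your extra parity argument is not needed, and would not work for even $a$.

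Second, parities are \emph{not} restored bead by bead. Take the paper's own example $(6,4,3^4)\mapsto(6,5^2,3,2,1)$ with $p=5$, canonical $\beta$-sets, $b=0$ and second move at $c=3$. The beads at $-3$ and $2$ never move, yet the first goes from a part $3$ to a part $2$ and the second from a part $4$ to a part $5$. Moreover, when $c-ap>b$ the bead moved from $c-ap$ to $c$ itself changes parity. For instance, with $p=3$, $\lambda=(2,2,1)$, $b=0$, $a=1$, A1 returns $(5)$, and the bead moved from $1$ to $4$ goes from a part $2$ to a part $5$.

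The missing idea is a pairing argument.
\begin{itemize}
\item The improper swaps strictly between the two proper moves force $y$ and $y+ap$ to have the same status on the range they cover.
\item Hence the unmoved beads whose parity flips, namely those in $(b-ap,\min(b,c-ap))$ and in $(\max(b,c-ap),c)$, come in pairs $y$, $y+qap$ with $q\geq 1$, lying on the same runner.
\item By Lemma~\ref{le:A1emp}, each window $[b+i-ap,b+i-1]$ before the second proper move contains $s$ empty positions of $B$ if it contains $b$, and $s+1$ otherwise. So paired beads have opposite parities, and flipping both leaves every runner count unchanged.
\item When $c-ap>b$, the bead of $(b-ap,b)$ congruent to $c$ modulo $ap$ is left unpaired. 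It has parity opposite to the bead at $c-ap$, and the two flips compensate each other.
\end{itemize}

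This bookkeeping must then be redone for a terminating up-move before the minimality contradiction is available. That includes the case $c-ap=b$, where the two moves merge into the single move $b+ap\mapsto b-ap$.
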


\begin{lemma}\cite[Lemma~6.7 and 6.8]{TurekMullineuxArxiv25}\label{le:A2runner}
	Let $\mathcal{B}=(B,b-ap,b)$ be a hook triple such that $B$ is the canonical $\beta$-set of a partition $\lambda$ lying in some set $\mathcal{E}_{\theta}(\gamma)$ such that $\theta$ contains a zero, and $\emp(\mathcal{B})$ is even. The algorithm A2 applied to $\mathcal{B}$ terminates and returns the canonical $\beta$-set of a partition $\mu\in\mathcal{E}_{\theta}(\gamma)$ such that $\mu\lhd \lambda$.
\end{lemma}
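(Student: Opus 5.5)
The plan is to follow the run of A2 on $\mathcal{B}=(B,b-ap,b)$ step by step, with three aims. First, show that a second proper move must occur. Second, show that this move is a down-move. Third, show that the odd sequence is restored afterwards. Throughout, write $e=\emp(\mathcal{B})$, which is even by hypothesis.

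\emph{Effect of the first move.} The first swap moves the bead from $b$ up to $b-ap$. By Lemma~\ref{lemma:readinglegandarmlength off abacus}, the moved bead loses $e+1$ empty lower-numbered positions, so its parity flips; by Remark~\ref{remark: oddsequencecountsoddbeads}, it changes from odd to even or back while staying on its runner. In addition, every bead strictly between $b-ap$ and $b$ loses exactly one empty position below it, so each of these beads flips parity as well. I would record this as an explicit ``defect'' in the odd sequence: a signed change on each runner, computed from the beads in the window $[b-ap,b]$.

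\emph{Sliding the window.} By Lemma~\ref{le:A1emp}(2), the window $[b-i-ap+1,\,b-i]$ contains exactly $e$ empty positions at every step. At each improper step $i$, where both or neither of $b-i-ap$ and $b-i$ are occupied, I would check that the defect is simply transported down by one position and that its parity structure is unchanged. The point is that the beads which enter and leave the window contribute symmetrically, because $e$ is even.

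\emph{Termination.} The window moves toward lower positions, where eventually every position is occupied. If $e>0$, this already forces a proper move at some step, since a window consisting only of beads would contain no empty positions. The case $e=0$ needs a separate argument. Here I would use the hypothesis that $\theta$ contains a zero, say $\theta_j=0$, together with minimality of the weight $w_\theta(\gamma)$. If no second proper move occurred, the intermediate set would have $p$-weight $w_\theta(\gamma)-a$ while its odd sequence could be adjusted back to $\theta$ using runner $j$. That would contradict minimality, so the algorithm must terminate.

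\emph{The second move is a down-move, and the odd sequence is $\theta$.} Suppose the second proper move were an up-move. Then the resulting partition $\mu$ would have $p$-weight $w_\theta(\gamma)-2a$. I would check, via the defect bookkeeping above, that the second move cancels the defect created by the first move, so $O(\mu)=\theta$. This contradicts the definition of $w_\theta(\gamma)$, so the second move is a down-move, the $p$-weight is preserved, and, once more by cancellation of the defect, $\mu\in\mathcal{E}_\theta(\gamma)$.

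\emph{Dominance.} Finally, $\lambda$ and $\mu$ share the partition obtained after the first move. They arise from it by adding a rim $ap$-hook ending at $b$ and at $b-t$ respectively, where $t\ge1$ indexes the step at which the second proper move occurs. The comparison fact at the end of Section~\ref{subsection:Partitions} then gives $\mu\lhd\lambda$.

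I expect the main obstacle to be the parity bookkeeping in the second and fifth steps: showing precisely that the second swap, located by the constant-$\emp$ window, restores each runner's count of odd beads, and handling the termination case $e=0$, where the zero entry of $\theta$ is genuinely needed.
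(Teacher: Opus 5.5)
\textbf{The gap is the case $e=0$.} Your treatment of $e=\emp(\mathcal{B})>0$ is sound. The cancellation you describe really holds, and it holds for an up-move as well as a down-move. At an improper step with both positions occupied, the entering and leaving beads lie on the same runner, and their numbers of empty positions below differ by the $e$ empties of the window, so they have the same parity. Together with minimality, this excludes a final up-move. The window count gives termination, and the dominance step is fine.

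The case $e=0$ is exactly where the hypothesis on $\theta$ enters, and your sketch there does not work. When $e=0$, Lemma~\ref{le:A1emp}(2) makes every window $[b-i-ap+1,b-i]$ full. So $b-i$ is always occupied, and every proper move A2 can make is an up-move. The real content of this case is therefore that it cannot occur: a terminating run would end in an up-move, which your next step excludes.

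Your proposed mechanism also misreads the defect. When $e=0$, the positions $b-ap+1,\dots,b$ are $ap$ consecutive beads. They all have the same number $E$ of empty positions below, hence the same parity, and they meet every runner exactly $a$ times. So the defect after the first move is $\pm\bm{a}$, uniform across all runners, and it cannot be repaired ``using runner $j$''. When the run is odd, A2 genuinely never terminates. For example, take $p=3$, $\lambda=(1^3)\in\mathcal{E}_{(1,1,1)}(\emptyset)$ and $\mathcal{B}=(B,-3,0)$: after the first move, every later swap is improper. So no termination argument can succeed without first excluding that configuration.

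\textbf{Repair.} Split on the parity of the run.
\begin{itemize}
\item If the run is odd, every runner carries at least $a$ odd beads of $\lambda$, contradicting $\theta_j=0$. This is the only place the zero is used.
\item If the run is even, then $E\geq 1$ (since $b-ap$ is empty) forces $E\geq 2$. So some position below $b-ap$ is empty. A2 then makes its second proper move, an up-move, when $b-i-ap$ reaches the largest such empty position. Your cancellation argument then produces a partition with $p$-core $\gamma$, odd sequence $\theta$ and $p$-weight $w_\theta(\gamma)-2a$, contradicting minimality.
\end{itemize}
Hence $e\geq 2$, and the rest of your proof goes through.
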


In the setting of Lemma~\ref{le:A1runner} and Lemma~\ref{le:A2runner}, we can conclude that the algorithm in question performs two proper moves: the initial up-move (with $i=0$) and the final down-move. We introduce some more notations for these cases. We write $\Aone(\mathcal{B})$ and $\Atwo(\mathcal{B})$ for the resulting $\beta$-sets after the algorithms A1 and A2, respectively, are applied to $\mathcal{B}$. Suppose the final proper move of the algorithm (a down-move) swaps $c-ap$ and $c$. Define hook triples $\widetilde{\Aone}(\mathcal{B})$ as $(\Aone(\mathcal{B}), c-ap, c)$ for algorithm A1 and $\widetilde{\Atwo}(\mathcal{B})$ as $(\Atwo(\mathcal{B}), c-ap, c)$ for algorithm A2. So $\widetilde{\Aone}$ and $\widetilde{\Atwo}$ keep track of the additional information regarding where the second proper swap was made.

In Figure~\ref{fig:algorithms}, if $\mathcal{B} = (B,b-ap,b)$ is the hook triple given by the top left $\beta$-sets $B$, with $b$ the position of its highlighted bead and $a=1$, and $\mathcal{C}$ is defined analogously for the top right abacus, then $\widetilde{\Aone}(\mathcal{B}) = \mathcal{C}$. As observed in the figure, we also have $\widetilde{\Atwo}(\mathcal{C}) = \mathcal{B}$. 
 	
With this notation, we immediately conclude the following from Lemma~\ref{le:A1emp}:
 	
 	\begin{corollary}\label{cor:A1newPair}
        Let $\mathcal{B}=(B,b-ap,b)$ be a hook triple with $B$ the canonical $\beta$-set of a partition $\lambda$ lying in some set $\mathcal{E}_{\theta}(\gamma)$.
        \begin{enumerate}
            \item Suppose that $\emp(\mathcal{B})$ is odd. Then $\emp\left(\widetilde{\Aone}(\mathcal{B})\right)$ is even.
            \item Suppose that $\theta$ contains a zero and $\emp(\mathcal{B})$ is even. Then $\emp\left(\widetilde{\Atwo}(\mathcal{B})\right)$ is odd.
        \end{enumerate}
 	\end{corollary}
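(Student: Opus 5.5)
We prove Corollary~\ref{cor:A1newPair} using Lemma~\ref{le:A1emp}, which says that a certain empty-position count is preserved along the algorithm. We then compare the count at the start with the count at the step where the final down-move happens. We treat (1) in detail; (2) is symmetric, with Lemma~\ref{le:A2runner} replacing Lemma~\ref{le:A1runner}.

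For (1), Lemma~\ref{le:A1runner} guarantees that A1 terminates with exactly two proper moves. The first is the up-move at $i=0$. Let $t-1$ be the step of the final down-move, which swaps $c-ap$ and $c$ with $c=b+t-1$.

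At step $i=0$, the quantity $\emp_{B_0}(b-ap,b-1)$ counts the empty positions in $[b-ap,b-1]$ of the original set $B$. Since $b-ap\notin B$, this equals $1+\emp_B(b-ap+1,b-1)$. Since $b\in B$, we have $\emp_B(b-ap+1,b-1)=\emp_B(b-ap+1,b)=\emp(\mathcal{B})$, which is odd. So the invariant of Lemma~\ref{le:A1emp} is even.

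By Lemma~\ref{le:A1emp}, we therefore have $\emp_{B_{t-1}}(c-ap,c-1)$ even. Just before the final swap, the down-move condition means $c-ap\in B_{t-1}$ and $c\notin B_{t-1}$. Thus $c-ap$ is not counted, and $\emp_{B_{t-1}}(c-ap,c-1)=\emp_{B_{t-1}}(c-ap+1,c-1)$.

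After the swap, the set $B_t=\Aone(\mathcal{B})$ agrees with $B_{t-1}$ on the open interval $(c-ap,c)$, and now $c\in B_t$. Hence
\[
\emp\bigl(\widetilde{\Aone}(\mathcal{B})\bigr)=\emp_{B_t}(c-ap+1,c)=\emp_{B_{t-1}}(c-ap+1,c-1),
\]
which is even.

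For (2), the invariant of Lemma~\ref{le:A1emp}(2) at $i=0$ is $\emp_B(b-ap+1,b)=\emp(\mathcal{B})$, which is even. At the final down-move, which swaps $c-ap$ and $c$ with $c=b-(t-1)$, we have $c-ap\in B_{t-1}$ and $c\notin B_{t-1}$. So the invariant equals $1+\emp_{B_{t-1}}(c-ap+1,c-1)$. After the swap, $\emp\bigl(\widetilde{\Atwo}(\mathcal{B})\bigr)=\emp_{B_t}(c-ap+1,c)=\emp_{B_{t-1}}(c-ap+1,c-1)$. This is the even invariant minus $1$, hence odd.

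The only point requiring care is correctly accounting for the endpoint positions in each interval, since the down-move changes which endpoint is empty. There is no real obstacle.
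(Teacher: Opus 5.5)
Your proof is correct and takes the paper's approach: you compare the invariant of Lemma~\ref{le:A1emp} at $i=0$ with its value at the end of the algorithm, and you use Lemmas~\ref{le:A1runner} and~\ref{le:A2runner} to know that the last proper move is a down-move. The only difference is cosmetic. You evaluate the invariant at $i=t-1$ and account for the final swap by hand, whereas the paper evaluates it at $i=t$; there, in (1), it is exactly $\emp\bigl(\widetilde{\Aone}(\mathcal{B})\bigr)$, and in (2) it exceeds $\emp\bigl(\widetilde{\Atwo}(\mathcal{B})\bigr)$ by one.
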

 	
 	\begin{proof}
 		Lemma~\ref{le:A1runner} and \ref{le:A2runner} tell us that both algorithms terminate with a down-move. Observe that in (1) $\emp_B(b-ap, b-1)$ is even and if $t$ is the number of swaps, then $\widetilde{\Aone}(\mathcal{B}) = (\Aone(\mathcal{B}), b+t-ap-1, b+t-1)$, and so $\emp\left(\widetilde{\Aone}(\mathcal{B})\right) = \emp_{B_t}(b+t-ap, b+t-1)$ where $B_t$ is the terminating $\beta$-set. The result follows from Lemma~\ref{le:A1emp}(1) applied with $i=0$ and $i=t$.
        
        The situation in (2) is similar: $\emp_B(b-ap+1, b)$ is even by our assumption and $\emp\left(\widetilde{\Atwo}(\mathcal{B})\right) = \emp_{B_t}(b-t-ap+2, b-t+1) = \emp_{B_t}(b-t-ap+1, b-t)-1$, since the swap of $b-t-ap+1$ and $b-t+1$ is a down-move. The result now follows from Lemma~\ref{le:A1emp}(ii) applied with $i=0$ and $i=t$.
 	\end{proof}

We now move to formulating the relation between A1 and A2. Let $\gamma$ be a $p$-core partition and $\theta$ a composition of length $p$ which contains a zero. Let $T^o_{\theta}(\gamma)$ be the set of hook triples $\mathcal{B}=(B,b-ap,b)$ such that $B$ is the canonical $\beta$-set of some $\lambda\in \mathcal{E}_{\theta}(\gamma)$ and $\emp(\mathcal{B})$ is odd. Replacing odd by even, we obtain a set which we call $T^e_{\theta}(\gamma)$.
	
\begin{lemma}\label{le:inverse}
	The map $\widetilde{\Aone}$ is a bijection from $T^o_{\theta}(\gamma)$ to $T^e_{\theta}(\gamma)$ with the inverse given by $\widetilde{\Atwo}$.
\end{lemma}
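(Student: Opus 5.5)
We verify three things: $\widetilde{\Aone}$ maps $T^o_{\theta}(\gamma)$ into $T^e_{\theta}(\gamma)$, $\widetilde{\Atwo}$ maps $T^e_{\theta}(\gamma)$ into $T^o_{\theta}(\gamma)$, and the two composites are identities. Mutual inverses then give the bijection.

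The first two claims are almost immediate from what precedes the statement. Take $\mathcal{B}=(B,b-ap,b)\in T^o_{\theta}(\gamma)$. Lemma~\ref{le:A1runner} shows that A1 terminates, and that $\Aone(\mathcal{B})$ is the canonical $\beta$-set of some $\mu\in\mathcal{E}_{\theta}(\gamma)$. The final down-move swaps $c-ap$ and $c$ with $c$ ending up in the set, so $\widetilde{\Aone}(\mathcal{B})=(\Aone(\mathcal{B}),c-ap,c)$ is a hook triple of the right shape. Corollary~\ref{cor:A1newPair}(1) shows its $\emp$ is even, so $\widetilde{\Aone}(\mathcal{B})\in T^e_{\theta}(\gamma)$. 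Symmetrically, Lemma~\ref{le:A2runner} (this is where $\theta$ containing a zero is used) and Corollary~\ref{cor:A1newPair}(2) give $\widetilde{\Atwo}(T^e_{\theta}(\gamma))\subseteq T^o_{\theta}(\gamma)$.

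The main work is $\widetilde{\Atwo}\circ\widetilde{\Aone}=\mathrm{id}$ and $\widetilde{\Aone}\circ\widetilde{\Atwo}=\mathrm{id}$. Consider A1 on $\mathcal{B}$ with $t$ swaps, so that $c=b+t-1$. The swaps at indices $1,\dots,t-2$ are improper: positions $b+i-ap$ and $b+i$ are either both in $B_i$ or both absent. Only the first and last swaps move anything. Write $C=\Aone(\mathcal{B})$ and run A2 on $(C,c-ap,c)$. Its step $0$ swaps $c-ap$ and $c$, which undoes the final down-move of A1 and is an up-move. Its step $j$ then swaps $c-j-ap$ and $c-j$. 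For $1\le j\le t-2$ these are the same pairs as A1's improper steps. Their occupancy has not been changed by either proper move, provided the intervals involved are disjoint from the moved positions. So they remain improper, and A2 reaches step $t-1$, the pair $(b-ap,b)$. There $b-ap\in C$ and $b\notin C$, so A2 performs a down-move restoring $B$ and terminates with hook triple $(B,b-ap,b)=\mathcal{B}$. The reverse composite is the mirror argument.

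The obstacle is the claim that the intermediate swaps stay improper. Positions $b+i$ and $c-j-ap$ can coincide with positions changed by the other proper move, since the windows have length $ap$ and may overlap the moved positions when $t-1<ap$. I would first check that every pair $\{b+i-ap,b+i\}$ with $0<i<t-1$ avoids $\{b-ap,b,c-ap,c\}$ entirely; this holds since $b<b+i<c$ and $b-ap<b+i-ap<c-ap$. This shows the pairs are unaffected. I also need that A2 does not stop earlier through some other proper move, which needs no separate argument because the pairs examined are exactly these unaffected ones. I expect the index bookkeeping, including the degenerate case $t=1$ where the two swaps coincide, to be the only real difficulty. That case cannot occur here, because Lemma~\ref{le:A1runner} guarantees a genuine second move.
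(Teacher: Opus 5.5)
Your overall plan is the paper's own. The images land in the right sets by Lemmas~\ref{le:A1runner} and~\ref{le:A2runner} and Corollary~\ref{cor:A1newPair}, and A2 then retraces the swaps of A1 in reverse order. The problem is the step you single out as the obstacle. Your proposed check, that every pair $\{b+i-ap,b+i\}$ with $0<i<t-1$ avoids $\{b-ap,b,c-ap,c\}$, is false in general. Your inequalities only exclude $b+i\in\{b,c\}$ and $b+i-ap\in\{b-ap,c-ap\}$. They do not exclude the cross-coincidences $b+i-ap=b$ (at $i=ap$) and $b+i=c-ap$ (at $i=t-1-ap$), and both occur as soon as $t-1>ap$.

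This does happen inside $T^o_{\theta}(\gamma)$. Take $p=3$, $\gamma=(2)$, $\theta=(0,1,0)$ and $\lambda=(2,2,1)\in\mathcal{E}_{\theta}(\gamma)$, whose canonical $\beta$-set is $B=\{1,0,-2,-4,-5,\dots\}$. Let $\mathcal{B}=(B,-3,0)$, so $\emp(\mathcal{B})=1$. A1 makes the up-move at $i=0$, improper swaps at $i=1,2,3$, and the down-move $1\mapsto 4$ at $i=4$. It returns the canonical $\beta$-set of $(5)$ with $c=4$. The intermediate pairs $\{-2,1\}$ and $\{0,3\}$ contain $c-ap=1$ and $b=0$, and the proper moves do change the occupancy of those positions. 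So your justification (``their occupancy has not been changed by either proper move'') fails as written.

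The conclusion is still true, for a simpler reason that needs no disjointness at all. Improper swaps do not change the set, so $B_1=B_2=\dots=B_{t-1}$. Thus A1 inspects every intermediate pair in the single set $B_1$, which lies after its first proper move and before its last. The initial swap of A2 on $\widetilde{\Aone}(\mathcal{B})$ undoes A1's final down-move, so it returns exactly $B_{t-1}=B_1$. Hence for $1\le j\le t-2$, A2 inspects the same pairs in the same set and finds them improper. At $j=t-1$ it meets $(b-ap,b)$ with $b-ap\in B_1$ and $b\notin B_1$, and performs the down-move back to $B$. This yields $\widetilde{\Atwo}(\widetilde{\Aone}(\mathcal{B}))=\mathcal{B}$, and the mirror argument handles the other composite. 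With this replacement your proof coincides with the paper's.
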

	
\begin{proof}
	By Lemma~\ref{le:A1runner} and Corollary~\ref{cor:A1newPair}(1), the image of $\widetilde{\Aone}$ (applied to elements of $T^o_{\theta}(\gamma)$) lies inside $T^e_{\theta}(\gamma)$. Similarly, the image of $\widetilde{\Atwo}$ (applied to elements of $T^e_{\theta}(\gamma)$) lies inside $T^o_{\theta}(\gamma)$ using Lemma~\ref{le:A2runner} and Corollary~\ref{cor:A1newPair}(2). If $\mathcal{B}=(B,b-ap,b)\in T^o_{\theta}(\gamma)$, then the algorithm A2 applied to $\widetilde{\Aone}(\mathcal{B})$ performs the swaps of A1 applied to $\mathcal{B}$ in reverse order. As A1 performed only two proper moves, its first and its final swap, it follows that after the initial swap of A2, the next proper move must be the swap of $b$ and $b-ap$; thus $\widetilde{\Atwo}\left(\widetilde{\Aone}(\mathcal{B})\right)=\mathcal{B}$. One analogously verifies that the composition in the other direction is also the identity, which finishes the proof.
\end{proof}

\section{Jantzen--Schaper formula in the multiplicity-free setting}
\label{section: JantzenSchaper}

For the rest of the paper $p$ is an odd prime.

The Jantzen--Schaper formula provides a powerful tool for constraining the decomposition
numbers~$d_{\lambda\mu}$ of the symmetric group in characteristic~$p$. In its original form, it expresses a certain non-negative integer combination $\sum_{i \geq 1} \left[S^\lambda_i : D^\mu\right]$ as an explicit sum, computable from data in the Young diagram of~$\lambda$, over
contributions from partitions related to~$\lambda$ by removing and adding back rim $ap$-hooks. It provides an upper bound on the
decomposition number~$d_{\lambda\mu}$ using the decomposition numbers~$d_{\nu\mu}$ with $\nu\rhd\lambda$. The formula becomes especially powerful when it is known \emph{a
priori} that for fixed partition $\mu$ all decomposition numbers satisfy $d_{\lambda\mu} \leq 1$.  In
such cases the upper bound provided by the
Jantzen--Schaper formula determines $d_{\lambda\mu}$: one need not appeal to any additional
technique to pin down the exact value.

We state the version of the Jantzen--Schaper formula as presented in \cite[Theorem~1.6]{FayersWeightThree08} (in the case $e=p$, that is, our decomposition numbers are with respect to the symmetric group over a field of characteristic $p$). Let $v_p$ be the \textit{$p$-valuation} defined on positive integers by
	
	\begin{align*}
		v_p(x) = \begin{cases}
			0 & \textnormal{ if } p\nmid x;\\
			1+v_p(x/p) & \textnormal{ otherwise}.
		\end{cases}
	\end{align*}

One should think of the $p$-valuation function as a function which returns a positive integer if its input is divisible by $p$ and zero otherwise -- this is the only property of $v_p$ we need in this section. This allows one to generalize the results of this section to decomposition numbers of Hecke algebras of odd quantum characteristic $p$ (not necessarily a prime) and arbitrary characteristic, by replacing $v_p$ in the Jantzen--Schaper formula with a different function with this property; see \cite[Theorem~1.6]{FayersWeightThree08}. To follow the narrative of this paper, we will work in the setting of symmetric groups.

For partitions $\lambda$ and $\nu$ of equal size, let $\mathcal{H}(\lambda, \nu)$ be the set of pairs $(g,h)$, where $g$ is a rim hook of $[\lambda]$, $h$ is a rim hook of $[\nu]$ and $[\lambda]\setminus g = [\nu]\setminus h$. The quantity $c_{\lambda\nu}$ is then defined as
	\[
		c_{\lambda\nu} = \sum_{(g,h)\in \mathcal{H}(\lambda, \nu)} (-1)^{a(g) + a(h) + 1} v_p(|g|).
	\]
    In \cite[Theorem~1.6]{FayersWeightThree08} the arm lengths are replaced by leg lengths, but this makes no difference when $|\lambda|=|\nu|$. We further replace the lexicographical order by the dominance order; again, this does not change the statement, since whenever $c_{\lambda\nu}\neq 0$, and in particular, $\mathcal{H}(\lambda, \nu)$ is non-empty, then $\lambda$ and $\nu$ are comparable in the dominance order (see the fact after Example~\ref{ex:dominance}) and so one does not lose any relation by passing from the lexicographical to the dominance order.
	
	\begin{theorem}[Jantzen--Shaper formula]\label{th:JS}
		Suppose that $\lambda\neq\mu$ are partitions of the same size, with the latter being $p$-regular, and define
		\[
		n_{\lambda\mu} = \sum_{\nu \rhd \lambda} c_{\lambda\nu} d_{\nu\mu} .
		\]
		Then
		\[
		d_{\lambda\mu}  \leq n_{\lambda\mu},
		\]
		and $d_{\lambda\mu}  = 0$ if and only if $n_{\lambda\mu}=0$.
	\end{theorem}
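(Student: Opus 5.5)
The plan is to deduce the statement from the Jantzen filtration of the Specht module and its sum formula. This is the route of Jantzen and Schaper, in the form given by James and Mathas and quoted by Fayers \cite[Theorem~1.6]{FayersWeightThree08}. Work over a $p$-modular system $(K,\mathcal{O},F)$ and let $S^\lambda_{\mathcal{O}}$ carry its standard $S_n$-invariant bilinear form $\langle\cdot,\cdot\rangle$. Put
\[
S^\lambda_{\mathcal{O}}(i)=\{x\in S^\lambda_{\mathcal{O}} : \langle x,S^\lambda_{\mathcal{O}}\rangle\subseteq p^i\mathcal{O}\},
\]
and let $S^\lambda_i$ be the image of $S^\lambda_{\mathcal{O}}(i)$ in $S^\lambda=S^\lambda_{\mathcal{O}}\otimes F$. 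This gives a descending filtration $S^\lambda=S^\lambda_0\supseteq S^\lambda_1\supseteq\cdots$ which eventually vanishes. Here $S^\lambda_1$ is the radical of the reduced form, so $S^\lambda/S^\lambda_1$ is $D^\lambda$ if $\lambda$ is $p$-regular and is $0$ otherwise.

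The first key step is the sum formula. In the Grothendieck group of $FS_n$ it reads
\[
\sum_{i\geq1}[S^\lambda_i]=\sum_{\nu\rhd\lambda}c_{\lambda\nu}[S^\nu].
\]
To obtain it, compute the $p$-adic valuation of the Gram determinant of the form restricted to the weight spaces of a suitable basis, or equivalently use Jantzen's general argument. Jantzen's argument expresses $\sum_i[S^\lambda_i]$ as the "$p$-adic valuation" of the determinant of the map to the dual module, and then evaluates this using the Murphy or seminormal basis. The valuations of the seminormal-form norms are governed by pairs of rim hooks $(g,h)$ with $[\lambda]\setminus g=[\nu]\setminus h$. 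Each such pair contributes $\pm v_p(|g|)$, with sign $(-1)^{\ell(g)+\ell(h)+1}$ in the leg-length convention. Since $|g|=|h|$, the arm-length sign used here agrees with the leg-length one. Moving a rim hook to a later position raises the partition in dominance order, so only $\nu\rhd\lambda$ occur, as noted after Example~\ref{ex:dominance}.

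The second step is to take the multiplicity of $D^\mu$ on both sides:
\[
\sum_{i\geq1}[S^\lambda_i:D^\mu]=\sum_{\nu\rhd\lambda}c_{\lambda\nu}d_{\nu\mu}=n_{\lambda\mu}.
\]
Because $\mu\neq\lambda$, the quotient $S^\lambda/S^\lambda_1$ (which is $0$ or $D^\lambda$) contains no copy of $D^\mu$. Hence $d_{\lambda\mu}=[S^\lambda_1:D^\mu]$. Every summand on the left is non-negative and the $i=1$ summand equals $d_{\lambda\mu}$, so $d_{\lambda\mu}\le n_{\lambda\mu}$. If $d_{\lambda\mu}=0$, then $D^\mu$ is not a composition factor of $S^\lambda_1$, hence not of any $S^\lambda_i\subseteq S^\lambda_1$, so $n_{\lambda\mu}=0$. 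The converse follows from the inequality.

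The main obstacle is the sum formula itself, namely the determinant computation that turns the valuation of the Gram matrix into the signed hook sum. In this paper I would not reprove it. I would cite Jantzen and Schaper (or James and Mathas) as stated in \cite[Theorem~1.6]{FayersWeightThree08}, and check only the two translations made above: from leg lengths to arm lengths, and from the lexicographic order to the dominance order.
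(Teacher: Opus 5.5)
Your proposal is correct and matches the paper's treatment. The paper also does not reprove the sum formula: it cites \cite[Theorem~1.6]{FayersWeightThree08} and checks only the same two translations, namely leg lengths to arm lengths via $|g|=|h|$, and lexicographic to dominance order using the comparability of $\lambda$ and $\nu$ whenever $\mathcal{H}(\lambda,\nu)\neq\emptyset$; your extra derivation of $d_{\lambda\mu}\le n_{\lambda\mu}$ and the vanishing criterion from the Jantzen filtration is the standard argument and is sound.
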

	
	We restate Theorem~\ref{th:JS} using James's abacus in the setting when the column labeled by $\mu$ of the decomposition matrix is \textit{multiplicity-free} (that is, contains only zeros and ones). In the statement, we use the following notation.
	
	\begin{definition}\label{def:funkyA}
		Let $\mu$ be a $p$-regular partition and $\mathcal{B} = (B, b-ap, b)$ a hook triple. Define $J_{\mu}(\mathcal{B})$ to be the set of hook triples $(C,c-ap,c)$ such that
		\begin{enumerate}
			\item $C$ is a $\beta$-set of some partition $\nu$ with $d_{\nu\mu}>0$; and
			\item $b < c$ ; and
            \item the swap of $b-ap$ and $b$ in $B$ results in the same set as the swap of $c-ap$ and $c$ in $C$.
		\end{enumerate}
	\end{definition}

    \begin{remark}\label{re:hookInterpretation}
        Let $\lambda$ denote the partition of which $B$ is a $\beta$-set in Definition~\ref{def:funkyA} and suppose $g$ is the $ap$-hook of $\lambda$ corresponding to the pair $(b-ap, b)$. Then condition (3) can be rephrased as: the pair $(c-ap,c)$ corresponds to an $ap$-hook $h$ of $\nu$ such that  $[\lambda]\setminus g = [\nu]\setminus h$. Condition (2) then states that $\nu \rhd \lambda$.
    \end{remark}

    \begin{remark}\label{re:unique}
        Given $\mu$ and $\mathcal{B} = (B, b-ap, b)$ as in Definition~\ref{def:funkyA}, for each $\beta$-set $C$ of a partition, there is at most one choice of $c\in C$ such that $(C,c-ap,c)\in J_{\mu}(\mathcal{B})$. Furthermore, if $B$ is a $\beta$-set with respect to integer $r$, then there is $(C,c-ap,c)\in J_{\mu}(\mathcal{B})$ only if $C$ is also a $\beta$-set with respect to $r$, as discussed in Section~\ref{sec:abacus}.
    \end{remark}

It is routine work to translate Theorem~\ref{th:JS} to the following statement.
    
	\begin{corollary}\label{co:JS}
		Suppose that $\lambda\neq\mu$ are partitions of the same size with the latter being $p$-regular. Assume that the column of the decomposition matrix labeled by $\mu$ is multiplicity-free. Writing $B$ for a $\beta$-set of $\lambda$, we have
		\[
		n_{\lambda\mu} = \sum_{\mathcal{B} = (B,b-ap,b)} v_p(ap) \sum_{\mathcal{C}\in J_{\mu}(\mathcal{B})} (-1)^{\emp(\mathcal{B}) + \emp(\mathcal{C})+1},
		\]
        where the first sum runs over all integers $a\geq 1$ and $b$ such that $\mathcal{B} = (B,b-ap,b)$ is a hook triple.
		Moreover,
		\begin{align*}
			d_{\lambda\mu}= \begin{cases*}
				1 \textnormal{ if } n_{\lambda\mu}>0;\\
				0 \textnormal{ if } n_{\lambda\mu}=0.
			\end{cases*}
		\end{align*}
	\end{corollary}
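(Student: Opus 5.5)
We derive Corollary~\ref{co:JS} from Theorem~\ref{th:JS}, rewriting $c_{\lambda\nu}$ in abacus terms and using the multiplicity-free hypothesis to replace $d_{\nu\mu}$ by an indicator.

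\emph{Step 1: the multiplicity-free hypothesis.} Since the column labeled by $\mu$ contains only zeros and ones, $d_{\nu\mu}\in\{0,1\}$ for every $\nu$. Hence
\[
n_{\lambda\mu}=\sum_{\nu\rhd\lambda,\ d_{\nu\mu}>0} c_{\lambda\nu}.
\]
The final assertion of the corollary then follows at once from Theorem~\ref{th:JS}. If $n_{\lambda\mu}=0$, then $d_{\lambda\mu}=0$. If $n_{\lambda\mu}>0$, then $d_{\lambda\mu}\neq 0$, and since $d_{\lambda\mu}\le 1$ we get $d_{\lambda\mu}=1$.

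\emph{Step 2: expand $c_{\lambda\nu}$ over pairs $(g,h)\in\mathcal{H}(\lambda,\nu)$.} We note that $v_p(|g|)=0$ unless $p$ divides $|g|$. So only pairs with $|g|=|h|=ap$, $a\ge1$, contribute; the sizes agree because $|\lambda|=|\nu|$.

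Fix the $\beta$-set $B$ of $\lambda$ with respect to $r$. By Section~\ref{sec:abacus}, rim $ap$-hooks $g$ of $\lambda$ correspond bijectively to hook triples $\mathcal{B}=(B,b-ap,b)$, and $\emp(\mathcal{B})=a(g)$ by Lemma~\ref{lemma:readinglegandarmlength off abacus}. Given $g$, removing it gives the $\beta$-set $B'$, namely $B$ with $b-ap$ and $b$ swapped.

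\emph{Step 3: match pairs $(\nu,h)$ with elements of $J_\mu(\mathcal{B})$.} A pair $(\nu,h)$ with $[\nu]\setminus h=[\lambda]\setminus g$ corresponds to a $\beta$-set $C$ of $\nu$ with respect to the same $r$ and a hook triple $(C,c-ap,c)$ whose swap also yields $B'$. By Remark~\ref{re:hookInterpretation}, the condition $\nu\rhd\lambda$ is equivalent to $b<c$, using the comparison fact stated after Example~\ref{ex:dominance}. Condition (1) of Definition~\ref{def:funkyA} encodes $d_{\nu\mu}>0$. By Remark~\ref{re:unique}, each $C$ determines at most one $c$, so this correspondence is a bijection onto $J_\mu(\mathcal{B})$, with $a(h)=\emp(\mathcal{C})$.

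\emph{Step 4: reorder the double sum.} Summing first over $g$ (equivalently over $\mathcal{B}$) and then over $(\nu,h)$ (equivalently over $\mathcal{C}\in J_\mu(\mathcal{B})$) produces exactly the displayed formula, with weight $v_p(ap)$ and sign $(-1)^{\emp(\mathcal{B})+\emp(\mathcal{C})+1}$.

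The only point needing care is the bijection in Step 3. We must ensure that $\beta$-sets of $\nu$ are taken with the same $r$, which Remark~\ref{re:unique} guarantees, and that the same $ap$-hook size is forced on both sides, which holds because $|\lambda|=|\nu|$. Everything else is bookkeeping.
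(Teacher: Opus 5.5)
Your proposal is correct and follows the paper's proof. Multiplicity-freeness turns $d_{\nu\mu}$ into an indicator, which gives the `moreover' part from the last sentence of Theorem~\ref{th:JS}; you then reorder the double sum to run over hooks $g$ of $\lambda$ first, drop the non-$p$-divisible ones, and identify the inner sum with $J_\mu(\mathcal{B})$ via Definition~\ref{def:funkyA} and Remark~\ref{re:hookInterpretation}. Your Step 3 simply writes out the bookkeeping the paper leaves implicit, namely the common $r$ for the $\beta$-sets and $\emp$ being the arm length.
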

	
	\begin{proof}
		The `moreover' part follows immediately from the last sentence of Theorem~\ref{th:JS} since the column of the decomposition matrix labeled by $\mu$ is multiplicity-free. Thus it remains to verify the formula for $n_{\lambda\mu}$. Using Theorem~\ref{th:JS} and the multiplicity-free assumption, we have
		\[
		n_{\lambda\mu} = \sum_{\substack{\nu \rhd \lambda \\ d_{\nu\mu} > 0}} \sum_{(g,h)\in \mathcal{H}(\lambda, \nu)} (-1)^{a(g) + a(h) + 1} v_p(|g|).
		\]
		Writing $\mathcal{H}(\cdot)$ for the set of hooks of a partition, we can rearrange this as
		\[
		n_{\lambda\mu} = \sum_{g\in \mathcal{H}(\lambda)} v_p(|g|) \sum_{\substack{\nu \rhd \lambda \\ d_{\nu\mu} > 0}} \quad \sum_{\substack{h\in \mathcal{H}(\nu) \\ [\lambda]\setminus g = [\nu]\setminus h}} (-1)^{a(g) + a(h) + 1} .
		\]
		After observing that we can restrict the first sum to $p$-divisible hooks $g$, the result follows from Definition~\ref{def:funkyA} (see also Remark~\ref{re:hookInterpretation}).
	\end{proof}

We can use this new formula to reduce Theorem~\ref{theorem:maintheorem} to a simpler statement: the columns of the decomposition matrices in Theorem~\ref{theorem:maintheorem} are multiplicity-free. This is proved using a downward induction (with respect to the dominance order) utilising the Jantzen--Schaper formula for multiplicity-free columns and a sign-reversing involution. We demonstrate the idea in the following example.

\begin{example}\label{ex:JSInvolution}
    Let $p=5$ and $\mu = (6,5^2,3,2,1)\vdash 22$. Assume that the column of the decomposition matrix labeled by $\mu$ is multiplicity-free. We show that for $\lambda=(6,4,2^5,1^2)$ we have $d_{\lambda\mu}=0$ assuming that for any $\nu\rhd\lambda$ of size $22$ we have $d_{\nu\mu}=1$ if and only if $\nu$ and $\mu$ have equal $p$-core and odd sequence. Let $B$ be the canonical $\beta$-set of $\lambda$. There are $3$ hook triples $\mathcal{B} = (B,b-ap,b)$, but, using Example~\ref{example:p=5gamma=3211allcolumns}, only for two of them $J_{\mu}(\mathcal{B})\neq \emptyset$, and for both of them $J_{\mu}(\mathcal{B})$ has size $2$; see Figure~\ref{fig:involution} for all the appearing hook triples. Moreover, in both cases, the elements $J_{\mu}(\mathcal{B})$ correspond to each other under the bijection $\widetilde{\Aone}$ (and its inverse $\widetilde{\Atwo}$) and their contributions to $n_{\lambda\mu}$ in Corollary~\ref{co:JS} cancel out; thus $n_{\lambda\mu}=d_{\lambda\mu}=0$.

    If instead $\lambda=(6,4,3,2^3,1^3)$ and $B$ is its canonical sets, then there are also two hook triples $\mathcal{B} = (B,b-ap,b)$ such that $J_{\mu}(\mathcal{B})\neq \emptyset$, but this time, in both cases, $J_{\mu}(\mathcal{B})=\left\{ \widetilde{\Aone}(\mathcal{B}) \right\}$; see Figure~\ref{fig:involution}. Moreover, both hook triples $\widetilde{\Aone}(\mathcal{B})$ contribute by $1$ to $n_{\lambda\mu}$; thus $n_{\lambda\mu}=2$ and $d_{\lambda\mu}=1$.
\end{example}

\begin{figure}[ht]
    \centering
    \[
     \mathcal{B} =\abacus(lmmmr,bbbbb,bnbbn,bobbb,nnbnn,bnnnn) \: \leadsto  \:J_{\mu}(\mathcal{B}) = \left\{ \abacus(lmmmr,bbbbb,bbbbn,nnbbb,onbnn,bnnnn) \longleftrightarrow \abacus(lmmmr,bbbbb,bbbbn,bnbnb,nnbon,bnnnn)\right\}
    \]
    \[
     \mathcal{B} =\abacus(lmmmr,bbbbb,bnbbn,bbbbo,nnbnn,bnnnn) \: \leadsto \: J_{\mu}(\mathcal{B}) = \left\{\abacus(lmmmr,bbbbb,bnbbb,nbbbn,onbnn,bnnnn) \longleftrightarrow \abacus(lmmmr,bbbbb,bnbbb,bnbbn,nobnn,bnnnn) \right\}
    \]
    \[
    \mathcal{B} =\abacus(lmmmr,bbbbb,bnbbb,nobbn,bnbnn,bnnnn) \: \leadsto \: J_{\mu}(\mathcal{B}) = \left\{\abacus(lmmmr,bbbbb,bbbbn,nnbbo,bnbnn,bnnnn) \right\}, 
    \mathcal{B} =\abacus(lmmmr,bbbbb,bnbbb,nbbbn,onbnn,bnnnn) \: \leadsto \: J_{\mu}(\mathcal{B}) = \left\{\abacus(lmmmr,bbbbb,bnbbb,bnbbn,nobnn,bnnnn)\right\}
    \]
    \caption{In the diagrams, we draw a hook triple $(B,b-ap,b)$ by drawing the abacus display of $B$ with bead at position $b$ highlighted, and take $b-ap$ to be immediately above $b$ (so in all drawn hook triples $a=1$). The first two lines display hook triples $\mathcal{B} = (B,b-ap,b)$ with $B$ the canonical $\beta$-set of $\lambda=(6,4,2^5,1^2)$ from Example~\ref{ex:JSInvolution} such that $J_{\mu}(\mathcal{B})\neq \emptyset$, and then the two-elements sets $J_{\mu}(\mathcal{B})$. There is an arrow between them to emphasise that they correspond to each other under the bijection $\widetilde{\Aone}$ (and its inverse $\widetilde{\Atwo}$). 
    On the last line, we replace $\lambda$ with $(6,4,3,2^3,1^3)$. The non-zero sets $J_{\mu}(\mathcal{B})$ then contain a single element given by $\widetilde{\Aone}(\mathcal{B})$.}
    \label{fig:involution}
\end{figure}

We are ready to reduce Theorem~\ref{theorem:maintheorem} to a simpler statement. Recall that for odd sequences $\theta$ containing a zero, the moreover part of Proposition~\ref{pr:equivalence max} proves that the maximal element in $\mathcal{E}_\theta(\gamma)$ is $p$-regular, so the following statement makes sense.
	
	\begin{proposition}\label{prop:decomposition}
		Let $\theta$ be a composition of length $p$ which contains a zero, and let $\gamma$ be a $p$-core partition. The column of the decomposition matrix labeled by the maximal partition of $\mathcal{E}_{\theta}(\gamma)$ has a one in precisely the rows labeled by partitions in $\mathcal{E}_{\theta}(\gamma)$ and zero elsewhere if and only if it is multiplicity-free.
	\end{proposition}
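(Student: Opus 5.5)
The forward direction is trivial: a column whose nonzero entries are all equal to one is multiplicity-free. For the converse, assume the column labeled by $\mu$, the maximal element of $\mathcal{E}_{\theta}(\gamma)$, is multiplicity-free. We prove by downward induction on $\lambda$, along a linear extension of the dominance order restricted to partitions of $n=|\mu|$, that $d_{\lambda\mu}=1$ if $\lambda\in\mathcal{E}_\theta(\gamma)$ and $d_{\lambda\mu}=0$ otherwise.

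The base cases are standard. If $\lambda\not\!\!\unlhd\,\mu$, then $d_{\lambda\mu}=0$, and such $\lambda$ do not lie in $\mathcal{E}_\theta(\gamma)$ because $\mu$ is its unique maximum. We also have $d_{\mu\mu}=1$. If $\lambda$ lies outside the block of $\mu$, then $d_{\lambda\mu}=0$ by Theorem~\ref{thm:NakayamaF}. So we may assume $\lambda\lhd\mu$ with $p$-core $\gamma$ and $p$-weight $w=w_\theta(\gamma)$. By the induction hypothesis, $d_{\nu\mu}>0$ for $\nu\rhd\lambda$ exactly when $\nu\in\mathcal{E}_\theta(\gamma)$.

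We now use Corollary~\ref{co:JS} with $B$ the canonical $\beta$-set of $\lambda$. By Remark~\ref{re:unique}, every $\mathcal{C}=(C,c-ap,c)\in J_\mu(\mathcal{B})$ has $C$ canonical. Moreover, $C$ is the canonical $\beta$-set of some $\nu\in\mathcal{E}_\theta(\gamma)$ with $\nu\rhd\lambda$, obtained from $\lambda$ by moving a single $ap$-rim hook, so $\nu$ has the same $p$-core and weight as $\lambda$. Each term of $n_{\lambda\mu}$ therefore corresponds to a pair $(\mathcal{B},\mathcal{C})$ with $\mathcal{C}\in T^o_\theta(\gamma)\sqcup T^e_\theta(\gamma)$, where the down-move $c-ap\to c$ applied to $C$ produces $B$ from the up-swap of $\mathcal{B}$. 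The sign $(-1)^{\emp(\mathcal{B})+\emp(\mathcal{C})+1}$ depends on the parities of the two arm lengths.

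The key step is a sign-reversing involution on these pairs, built from the bijection $\widetilde{\Aone}\colon T^o_\theta(\gamma)\to T^e_\theta(\gamma)$ of Lemma~\ref{le:inverse}.

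Take a pair in which $\mathcal{C}\in T^o_\theta(\gamma)$ and $\mathcal{C}'=\widetilde{\Aone}(\mathcal{C})$. Applying A1 to $\mathcal{C}$ first performs the up-swap at $(c-ap,c)$ and then a down-move at $(c'-ap,c')$. The intermediate set is common to both, so $\mathcal{C}'$ performs the same removal, namely the one from $\mathcal{B}$. We then need two checks.

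\begin{enumerate}
\item The condition $c<c'$ matters: it holds automatically for A1, so $\mathcal{C}'$ also satisfies $b<c'$, and by Lemma~\ref{le:A1runner} its partition is again in $\mathcal{E}_\theta(\gamma)$. Hence $\mathcal{C}'\in J_\mu(\mathcal{B})$.
\item The signs differ by Corollary~\ref{cor:A1newPair}.
\end{enumerate}

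Conversely, for $\mathcal{C}'\in T^e$ in $J_\mu(\mathcal{B})$, we apply $\widetilde{\Atwo}$. This goes to lower positions, so it may fail to satisfy $b<c$. Such unmatched terms occur exactly when the A2-partner is $\mathcal{B}$ itself, i.e.\ when $\mathcal{B}\in T^o_\theta(\gamma)$ with $\lambda\in\mathcal{E}_\theta(\gamma)$ and $\mathcal{C}'=\widetilde{\Aone}(\mathcal{B})$.

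After the cancellation, $n_{\lambda\mu}$ reduces to a sum over $\mathcal{B}\in T^o_\theta(\gamma)$ with $B$ the $\beta$-set of $\lambda$, each contributing $v_p(ap)\cdot(+1)$ since the sign is $(-1)^{\text{odd}+\text{even}+1}=+1$. This sum is empty if $\lambda\notin\mathcal{E}_\theta(\gamma)$.

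When $\lambda\in\mathcal{E}_\theta(\gamma)$ and $\lambda\ne\mu$, Proposition~\ref{pr:equivalence max} shows $\lambda$ has a $p$-divisible hook with odd arm length, so the sum is positive. Corollary~\ref{co:JS} then gives $d_{\lambda\mu}=1$ in that case and $0$ otherwise, completing the induction.

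The main obstacle is making the matching precise. One has to verify that the A2-partner of an even-arm $\mathcal{C}'$ either satisfies $b<c$ and lies in $J_\mu(\mathcal{B})$, or equals $\mathcal{B}$. For this I would use the reversibility argument from the proof of Lemma~\ref{le:inverse}: A2 retraces the swaps, and its first proper move after the initial one is the swap at $(b-ap,b)$ or beyond. I would also need to confirm that all $\nu$ arising from $J_\mu(\mathcal{B})$ have canonical $\beta$-sets belonging to $T^{o}\sqcup T^e$ for the same $\theta$. This holds because $d_{\nu\mu}>0$ forces $\nu\in\mathcal{E}_\theta(\gamma)$.
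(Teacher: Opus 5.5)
Your proposal is correct and is essentially the paper's proof. Both use downward induction, apply Corollary~\ref{co:JS} with the canonical $\beta$-set of $\lambda$, and cancel terms by a sign-reversing involution on $J_{\mu}(\mathcal{B})$ given by $\widetilde{\Aone}$ on $T^o_{\theta}(\gamma)$ and $\widetilde{\Atwo}$ on $T^e_{\theta}(\gamma)$; its only fixed point is $\widetilde{\Aone}(\mathcal{B})$, which occurs when $\mathcal{B}\in T^o_{\theta}(\gamma)$.

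The step you flag is settled in the paper exactly as you suggest. After the first swap of A2 on $\mathcal{C}'$, the set equals $B$ with $b-ap$ and $b$ swapped, and the non-proper swaps in between leave it unchanged. Hence A2 terminates no later than the swap of $b-ap$ and $b$, giving $c''\geq b$, with equality if and only if $\widetilde{\Atwo}(\mathcal{C}')=\mathcal{B}$.
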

	
	\begin{proof}
        The `only if' direction is clear. To prove the converse, assume that the column of the decomposition matrix labeled by the maximal partition of $\mathcal{E}_{\theta}(\gamma)$, call it $\mu$, is multiplicity-free. We prove that for all $\lambda\vdash |\mu|$ we have $d_{\lambda\mu}=1$ if and only if $\lambda\in\mathcal{E}_{\theta}(\gamma)$ by downwards induction on $\lambda$ (with respect to the dominance order).

        Suppose that the statement is true for all partitions $\nu\rhd \lambda$. If $\lambda=\mu$, then the statement is true as $d_{\mu\mu}=1$ and $\mu\in\mathcal{E}_{\theta}(\gamma)$. Thus we can assume that $\lambda\neq \mu$ and apply Corollary~\ref{co:JS}.

        Let $B$ be the canonical $\beta$-set of $\lambda$. We can conclude the proof once we prove the following claim for any hook triple $\mathcal{B} = (B,b-ap,b)$.
        
        \begin{claim}\label{cl:js}
        The quantity
		\begin{equation*}
		    \tilde{n}_{\lambda\mu}(\mathcal{B}) := \sum_{\mathcal{C}\in J_{\mu}(\mathcal{B})} (-1)^{\emp(\mathcal{B}) + \emp(\mathcal{C})+1}
		\end{equation*}
		is $1$ if $\lambda\in \mathcal{E}_{\theta}(\gamma)$ and $\emp(\mathcal{B})$ is odd, \emph{and} is $0$ otherwise.
        \end{claim}
        
        Indeed, the claim would mean that $n_{\lambda\mu}=0$ if $\lambda\notin \mathcal{E}_{\theta}(\gamma)$, and is non-zero otherwise since in that case $\lambda$ is a non-maximal element of $\mathcal{E}_{\theta}(\gamma)$; thus, by Proposition~\ref{pr:equivalence max}, there is a hook triple $\mathcal{B} = (B,b-ap,b)$ with $\emp(\mathcal{B})$ odd, showing $n_{\lambda\mu}\geq v_p(ap)\tilde{n}_{\lambda\mu}(\mathcal{B})\geq 1$.

        Fix a hook triple $\mathcal{B} = (B,b-ap,b)$. By the inductive hypothesis, and the fact that $B$ is the canonical $\beta$-set of $\lambda$, we can replace (1) in Definition~\ref{def:funkyA} with
		
		\begin{enumerate}
			\item[(1')] $C$ is the canonical $\beta$-set of some partition $\nu\in \mathcal{E}_{\theta}(\gamma)$.
		\end{enumerate}
        Therefore, $J_{\mu}(\mathcal{B})\subseteq T^o_{\theta}(\gamma) \sqcup T^e_{\theta}(\gamma)$.
        
		To prove Claim~\ref{cl:js} we define an involution $\omega$ on $J_{\mu}(\mathcal{B})$ by
		\begin{align*}
			\omega(\mathcal{C}) = \begin{cases}
				\widetilde{\Aone}(\mathcal{C}) &\textnormal{if } \mathcal{C}\in T^o_{\theta}(\gamma);\\
				\widetilde{\Atwo}(\mathcal{C}) &\textnormal{if } \mathcal{C}\in T^e_{\theta}(\gamma) \textit{ and } \widetilde{\Atwo}(\mathcal{C})\neq \mathcal{B};\\
				\mathcal{C} &\textnormal{if } \mathcal{C}\in T^e_{\theta}(\gamma) \textit{ and } \widetilde{\Atwo}(\mathcal{C})= \mathcal{B}.
			\end{cases}
		\end{align*}
		
		We firstly check that $\omega(\mathcal{C})\in J_{\mu}(\mathcal{B})$ for any $\mathcal{C} = (C,c-ap,c)\in J_{\mu}(\mathcal{B})$. This is clear in the last case. In the first case, let $(C',c'-ap,c') = \widetilde{\Aone}(\mathcal{C})$. Then $C'$ is the canonical $\beta$-set of some partition $\nu\in \mathcal{E}_{\theta}(\gamma)$ by Lemma~\ref{le:A1runner} since $C$ is the canonical $\beta$-set of such a partition. Hence Definition~\ref{def:funkyA}(1') holds for $C'$.
        
        The definition of $\widetilde{\Aone}$ shows that $c'>c$; hence $c'>c>b$, and we see that Definition~\ref{def:funkyA}(2) holds for $\widetilde{\Aone}(\mathcal{C})$. It also shows that the swap of $c'-ap$ and $c'$ in $C'$ results in the same set as the swap of $c-ap$ and $c$ in $C$, which is, by assumption on $\mathcal{C}$, equal to the set obtained from $B$ by the swap of $b-ap$ and $b$; hence Definition~\ref{def:funkyA}(3) holds for $\widetilde{\Aone}(\mathcal{C})$. Therefore $\widetilde{\Aone}(\mathcal{C})\in J_{\mu}(\mathcal{B})$.

		The second case is similar. The difference is that we replace A1 by A2, use Lemma~\ref{le:A2runner} instead of Lemma~\ref{le:A1runner}, and change the argument for $c'>b$: the algorithm A2 applied to $\mathcal{C}$ performs the initial swap of $c-ap$ and $c$, which yields a set $B'$, and then one more (terminating) proper move. Since $B'$ can also be obtained from $B$ by swapping $b-ap$ and $b$, and $c>b$, this terminating proper move is either the swap of $b-ap$ and $b$ or happens before the algorithm swaps $b-ap$ and $b$; thus $c'\geq b$ with equality if and only if  $\widetilde{\Atwo}(\mathcal{C})=\mathcal{B}$.
		
		Using the inverse relation of A1 and A2 from Lemma~\ref{le:inverse}, we immediately see that $\omega$ is an involution. If $\mathcal{C}$ falls into the first two cases of the definition of $\omega$, from Corollary~\ref{cor:A1newPair}, we have $(-1)^{\emp(\omega(\mathcal{C}))} = - (-1)^{ \emp(\mathcal{C})}$. Thus, the contributions from $\mathcal{C}$ and $\omega(\mathcal{C})$ to $\tilde{n}_{\lambda\mu}(\mathcal{B})$ cancel out unless $\mathcal{C}$ belongs to the third case of the definition of $\omega$. By Lemma~\ref{le:inverse}, this happens if and only if $\mathcal{B}\in T_{\theta}^o(\gamma)$ and $\mathcal{C} = \widetilde{\Aone}(\mathcal{B})$. Note that in such a case, $\mathcal{C}\in J_{\mu}(\mathcal{B})$: Definition~\ref{def:funkyA}(1') holds by Lemma~\ref{le:A1runner} and the other two conditions are clear from the definition of A1.
        
        We deduce that if $\lambda\in \mathcal{E}_{\theta}(\gamma)$ and $\emp(\mathcal{B})$ is odd, then
		\[
		\tilde{n}_{\lambda\mu}(\mathcal{B}) =  (-1)^{\emp(\mathcal{B}) + \emp(\widetilde{\Aone}(\mathcal{B}))+1},
		\]
		and $\tilde{n}_{\lambda\mu}(\mathcal{B})=0$, otherwise. By Corollary~\ref{cor:A1newPair}(1), $\tilde{n}_{\lambda\mu}(\mathcal{B})=1$ in the former. This shows Claim~\ref{cl:js}, finishing the proof. 
	\end{proof}

\begin{remark}\label{re:projective remains}
    While Proposition~\ref{prop:decomposition} does not mention any twisted Foulkes modules, it is inevitably connected to them through Corollary~\ref{cor:FoulkesModules}. As we see in the proof of Theorem~\ref{theorem:maintheorem} and Corollary~\ref{cor:projective indecomposable} in Section~\ref{sec:proofs}, the just proved proposition shows that if the summand of the twisted Foulkes modules in Corollary~\ref{cor:FoulkesModules} is projective, then it is indecomposable, as stated in Section~\ref{sec:outlineproof}.
\end{remark}

\begin{remark}\label{re:q characteristic}
    By replacing $v_p$ by an appropriate function from \cite[Theorem~1.6]{FayersWeightThree08} throughout the proof, one shows that Proposition~\ref{prop:decomposition} holds for decomposition matrices for the Hecke algebra of odd quantum characteristic $p$ (not necessarily a prime) and arbitrary characteristic.
\end{remark}

\section{Brauer morphism}
\label{sec:Brauer}

The main goal of this section is to obtain the final piece of the proof of Theorem~\ref{theorem:maintheorem} in the form of the following theorem. As usual, $F$ is a field of odd prime characteristic $p$ throughout the section.

\begin{theorem}
\label{theorem:summandisprojective}
    Let $\gamma$ be a $p$-core partition, $\theta$ a composition of $k$ of length $p$ which contains a zero and $w=w_{\theta}(\gamma)$. Let $m = (|\gamma|+pw-k)/2$. The $FS_{2m+k}$-module $\left(\left(H^{(2^m)}_{C(\gamma) + \bm{w} - \theta} \boxtimes \sgn \right)\Ind^{S_{2m+k}}\right)_{C(\gamma) + \bm{w}}$ is projective.
\end{theorem}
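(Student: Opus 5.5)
Write $V$ for the module in the statement. Since $V$ is a direct summand of a $p$-permutation module (a block component of an induced module from a permutation module tensored with a one-dimensional module), $V$ is itself a $p$-permutation module, i.e.\ a direct sum of trivial source modules. We use the standard criterion that a $p$-permutation module $V$ is projective if and only if its Brauer quotient $V(R)$ vanishes for every nontrivial $p$-subgroup $R$. Moreover, an indecomposable summand with vertex $Q$ satisfies $V(Q)\neq 0$, and any such vertex contains a subgroup of order $p$. It therefore suffices to show $V(R)=0$ for every subgroup $R\leq S_{2m+k}$ of order $p$. Such an $R$ is generated by a product of $t\geq 1$ disjoint $p$-cycles, and up to conjugacy $R=R_t$ with normaliser $N_t\cong (C_p\rtimes C_{p-1})\wr S_t \times S_{2m+k-tp}$ (roughly).

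First we compute the Brauer quotient of the induced module before block projection. We apply Proposition~\ref{pr:Mackey for Brauer} with $K=S_{2m}\times S_k$ and the $p$-permutation module $H^{(2^m)}_{\tau}\boxtimes\sgn$, where $\tau=C(\gamma)+\bm{w}-\theta$. The double cosets with $R\leq {}^xK$ correspond to ways of distributing the $t$ $p$-cycles between the two factors, say $t_1$ cycles in $S_{2m}$ and $t_2$ in $S_k$. Each term is then induced from the Brauer quotient of an outer tensor product. This is the tensor product of $H^{(2^m)}_\tau(R_{t_1})$ with $\sgn_{S_k}(R_{t_2})$; the latter is $\sgn$ on the normaliser restricted suitably, since $\sgn$ is one-dimensional and $p$ is odd. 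For the first factor we invoke the Giannelli--Wildon description of Brauer quotients of $H^{(2^m)}$: $H^{(2^m)}(R_{t_1})$ is an inflation of something like $H^{(2^{(m-pt_1/2)})}\boxtimes(\text{module for }N_{S_{pt_1}}(R_{t_1}))$. Block projection commutes with the Brauer morphism in the sense of Brauer's first/second main theorem: the block idempotent $e_\tau$ maps under $\mathrm{Br}_R$ to a sum of block idempotents of $C(R)\cong C_p\wr S_{t}\times S_{n-tp}$ whose $S_{n-tp}$-part has $p$-content $\tau-\bm{t}$. Thus the Brauer quotient of $V$ becomes a sum of induced modules of the form
\[
\Bigl(\bigl(H^{(2^{m'})}_{\tau-\bm{t_1}}\boxtimes\sgn\bigr)\Ind^{S_{2m'+k-pt_2}}\Bigr)_{C(\gamma)+\bm{w}-\bm{t}}\boxtimes(\cdots),
\]
with $2m'=2m-pt_1$.

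By Lemma~\ref{le:FoulkesModules}, applied with $w-t$ in place of $w$ and odd sequence $\theta'=\theta-\bm{t_2}$ (after re-indexing, since $\tau-\bm{t_1}=C(\gamma)+\bm{(w-t)}-(\theta-\bm{t_2})$), the ordinary character of the displayed module is the sum over partitions with $p$-core $\gamma$, $p$-weight $w-t<w$ and odd sequence $\theta-\bm{t_2}$. If $t_2\geq 1$, then $\theta-\bm{t_2}$ has a negative entry because $\theta$ contains a zero, so the character is zero. If $t_2=0$, then the weight $w-t$ is smaller than $w_\theta(\gamma)$, so by minimality (Definition~\ref{def:omega_O}) there is no such partition and the character is again zero. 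An $\mathcal{O}$-free module, or here a $p$-permutation module liftable to $\mathcal{O}$, with zero character is zero. Hence $V(R)=0$ for all $R$ of order $p$, and $V$ is projective. We use this lift: Brauer quotients of $p$-permutation modules lift, and the analogous statements hold over $\mathcal{O}$ by the same Mackey decomposition.

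The main obstacle is the middle step: making precise how the block projection $(-)_{C(\gamma)+\bm{w}}$ interacts with the Brauer quotient and with the Mackey-type decomposition. One must check that only the $S_{n-tp}$-component's $p$-content matters, shifted exactly by $\bm{t}$, and identify $H^{(2^m)}_\tau(R_{t_1})$ with the correct Foulkes-module block on the complementary symmetric group. I would first try to handle this using the Broué–Puig description $\mathrm{Br}_R(e_\tau)=e_{\tau-\bm{t}}\otimes 1$ for symmetric groups, and the known fact that $H^{(2^m)}(R)\cong H^{(2^{m'})}\boxtimes H'$ from Giannelli--Wildon \cite{GiannelliWildonFoulkesandDecomposition15}. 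If that identification is too delicate, I would fall back to using vertices: show that any non-projective summand would force a summand in a block of smaller weight with the same odd-sequence data, contradicting minimality of $w_\theta(\gamma)$ or the zero entry of $\theta$.
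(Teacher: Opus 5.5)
Your proposal is correct and is essentially the paper's proof. You reduce via Theorem~\ref{th:Brauer} to showing $V(R)=0$ for subgroups of order $p$, compute this Brauer quotient by Mackey over $S_{2m}\times S_k$ together with the Giannelli--Wildon description of $H^{(2^m)}(R)$ and block compatibility (this is the paper's Corollary~\ref{co:twiseted Foulkes}), and then apply Lemma~\ref{le:FoulkesModules} with odd sequence $\theta-\bm{t_2}$ and weight $w-t$ to contradict either the zero entry of $\theta$ or the minimality of $w_\theta(\gamma)$. The block/Brauer interaction you flag as the main obstacle is exactly what the paper cites as Theorem~\ref{th:blocks and Brauer}. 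Two details you leave implicit are each settled in a line there: odd $t_1$ contributes nothing (Corollary~\ref{co:FoulkesSummands}, which is also what makes $m'$ an integer), and $w\geq t$ follows from Theorem~\ref{thm: p-residues}.
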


We have seen in Corollary~\ref{cor:FoulkesModules} that $m$ is a non-negative integer, and so the statement makes sense. The same result also gives a formula for the ordinary character of this projective module (working over a discrete valuation ring of characteristic $0$); this formula and the fact that it is multiplicity-free are what allow us to eventually deduce Theorem~\ref{theorem:maintheorem} from Proposition~\ref{prop:decomposition} and Theorem~\ref{theorem:summandisprojective}. We show that the module in question is projective through the Brauer morphism. The proof can be seen as an abstraction of the proof of \cite[Proposition~5.1]{GiannelliWildonFoulkesandDecomposition15}. While the Brauer morphism is defined for any module of a group algebra, we only define it for $p$-permutation modules; in this setting, a definition more suitable for our purposes can be used.  

Let $G$ be a finite group. An $FG$-module $V$ is called a \textit{$p$-permutation module} if for every Sylow $p$-subgroup $Q\leq G$, there is a $Q$-invariant basis $\mathcal{I}_Q$ of $V$ (that is, a basis such that $Q\mathcal{I}_Q = \mathcal{I}_Q$). Equivalently, $V$ is a $p$-permutation module if $V\Res_Q$ is a permutation module. Note that since all Sylow $p$-subgroups of $G$ are conjugate in $G$, either of the conditions for $p$-permutation modules can be verified only for one Sylow $p$-subgroup of $G$. We mention the fact that $p$-permutation modules are closed under direct sum, summands, outer tensor products, induction and restriction (in fact, they are precisely summands of permutation modules \cite[(0.4)]{BroueP-perm85}, from which all of this follows). These definitions can also be made for $\mathcal{O}G$-modules where $\mathcal{O}$ is a discrete valuation ring, but we only need this generalization in the next section.

Let $V$ be a $p$-permutation module of $FG$. For a $p$-subgroup $R\leq G$, we use the notation $\mathcal{I}_R(V)$ for a $Q$-invariant basis of $V$ for some Sylow $p$-subgroup $Q\leq G$ containing $R$. The $F$-vector subspace $V(R)$ of $V$ spanned by the $R$-invariant elements of $\mathcal{I}_R(V)$ (that is, elements $v\in\mathcal{I}_R(V)$ such that $xv = v$ for all $x\in R$) inherits a structure of an $FN_G(R)$-module. Moreover, $V(R)$ is independent of the choice of $\mathcal{I}_R(V)$. The map $V\mapsto V(R)$ is called the \textit{Brauer morphism}.

\begin{example}\label{ex:sgn}
    If $V=\mathbf{1}$, the trivial module of $FG$, then, obviously, $\mathbf{1}(R)= \mathbf{1}\Res_{N_G(R)}$. Now suppose that $G=S_n$ (and recall that $p$ is odd). The one-dimensional sign module $\sgn$ of $FG$ is a $p$-permutation module: this is because all Sylow $p$-subgroups of $G$ lie inside $A_n$ and thus act on $\sgn$ as the identity. From this observation, for any $p$-subgroup $R$ of $G$, $\mathcal{I}_R(V)$ consists of a chosen non-zero vector of $\sgn$, and this vector is $R$-invariant; therefore, as for the trivial module, we get $\sgn(R) = \sgn\Res_{N_G(R)}$.   
\end{example}

The following relation of the Brauer morphism and vertices (defined in Section~\ref{se:rep}) proved in \cite[Theorem~3.2(1)]{BroueP-perm85} is a powerful tool we use to prove that the module in Theorem~\ref{theorem:summandisprojective} is projective.

\begin{theorem}\label{th:Brauer}
    Let $V$ be an indecomposable $p$-permutation $FG$-module and $R\leq G$ be a $p$-group. Then $V(R)\neq 0$ if and only if $R$ is contained in a vertex of $V$.
\end{theorem}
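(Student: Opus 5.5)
We would prove the two implications separately, using the intrinsic description of the Brauer quotient. Recall that for a $p$-subgroup $R$ one has $V(R)\cong V^R\big/\sum_{S<R}\mathrm{Tr}_S^R(V^S)$. For a $p$-permutation module with an $R$-stable basis $\mathcal{I}$, the fixed space $V^R$ is spanned by the $R$-orbit sums of $\mathcal{I}$. An orbit sum of a non-fixed basis element is a relative trace $\mathrm{Tr}_S^R$ from its stabiliser $S<R$. The traces $\mathrm{Tr}_S^R(V^S)$ are spanned by the $\mathrm{Tr}_S^R$ of $S$-orbit sums, which are multiples of $R$-orbit sums of size divisible by $p$ when the element is $R$-fixed. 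It follows that $V(R)$ is spanned by the images of the $R$-fixed elements of $\mathcal{I}$. So the definition used in the paper agrees with the Brauer quotient, which is independent of the basis and functorial. We will use functoriality and additivity freely; in particular, if $V\mid W$ then $V(R)\mid W(R)$.

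For the implication ``$V(R)\neq0\Rightarrow R$ lies in a vertex'', let $Q$ be a vertex of $V$. An indecomposable $p$-permutation module has trivial source, so $V$ is a direct summand of $F[G/Q]$. Then $V(R)$ is a summand of $F[G/Q](R)$. The latter is spanned by the cosets $gQ$ fixed by $R$, that is, those with $R\leq gQg^{-1}$. Hence $V(R)\neq0$ forces $R\leq {}^gQ$, which is again a vertex. For the converse, take $R\leq Q$ and choose a Sylow subgroup $P\supseteq Q$ together with a $P$-stable basis. Every $Q$-fixed basis element is then also $R$-fixed, so $V(Q)\neq0$ implies $V(R)\neq0$. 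This reduces everything to the claim that $V(Q)\neq0$ for a vertex $Q$.

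This last claim is the main obstacle, and we would prove it by contradiction via Higman's criterion. Let $E=\mathrm{End}_F(V)$. This is again a $p$-permutation module, under conjugation, and we have $\mathrm{End}_F(V)(Q)\cong\mathrm{End}_F(V(Q))$. Suppose $V(Q)=0$. Then the Brauer map $\mathrm{Br}_Q$ kills $E^Q$, so $E^Q=\sum_{S<Q}\mathrm{Tr}_S^Q(E^S)$. Since $V$ is relatively $Q$-projective, Higman's criterion gives $\mathrm{id}_V=\mathrm{Tr}_Q^G(\phi)$ for some $\phi\in E^Q$. Substituting the decomposition of $\phi$ and using transitivity of traces, we get $\mathrm{id}_V\in\sum_{S<Q}\mathrm{Tr}_S^G(E^S)$. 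Each summand $\mathrm{Tr}_S^G(E^S)$ is an ideal of the local ring $\mathrm{End}_{FG}(V)$. By Rosenberg's lemma, $\mathrm{id}_V$ then lies in a single $\mathrm{Tr}_S^G(E^S)$ with $S<Q$. Higman's criterion now says $V$ is relatively $S$-projective, contradicting the minimality of the vertex $Q$. Hence $V(Q)\neq0$, which completes the proof.
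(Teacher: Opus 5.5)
Your proof is correct. The paper gives no argument for this statement; it quotes Broué \cite[Theorem~3.2(1)]{BroueP-perm85}. What you have written is a self-contained version of essentially the standard argument behind that citation:
\begin{itemize}
\item the fixed-coset computation in $F[G/Q]$ gives ``$V(R)\neq 0\Rightarrow R\leq{}^gQ$'';
\item a basis stable under a Sylow subgroup containing $Q$ gives monotonicity in $R$;
\item $\mathrm{End}_F(V)(Q)$, Higman's criterion and the locality of $\mathrm{End}_{FG}(V)$ give non-vanishing at a vertex.
\end{itemize}
Beyond the citation, your first paragraph also justifies what the paper only asserts about its basis-dependent definition of $V(R)$. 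Identifying the span of the $R$-fixed basis vectors with $V^R/\sum_{S<R}\mathrm{Tr}_S^R(V^S)$ gives, at once, independence of the basis, the $FN_G(R)$-module structure, and the additivity used in Lemma~\ref{le:Brauer properties}(1).

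A few small remarks.
\begin{itemize}
\item The trace of the $S$-orbit sum of a basis vector $x$ is $[R_x:S\cap R_x]$ times the $R$-orbit sum of $x$. It is this coefficient, not an orbit size, that is divisible by $p$ when $x$ is $R$-fixed; your wording there is garbled.
\item In the last step you only need $V(Q)=0\Rightarrow \mathrm{End}_F(V)(Q)=0$. This is immediate from the elementary-matrix basis $e_{x,y}$, since $e_{x,y}$ is $Q$-fixed exactly when $x$ and $y$ are, so the algebra isomorphism is not needed.
\item Rosenberg's lemma is more than you need. In a local ring, a sum of ideals containing $1$ must have a summand equal to the whole ring.
\item The trivial-source fact can be bypassed. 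Note that $V\mid\mathrm{Ind}_Q^G\mathrm{Res}_QV$ and $\mathrm{Res}_QV$ is a permutation $FQ$-module, so $V$ is a summand of a sum of modules $F[G/S]$ with $S\leq Q$. The same fixed-coset argument then applies.
\end{itemize}
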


We summarise basic properties of the Brauer morphism below. For the reader's convenience we include short proofs. In the statement of (3) below, we write $\pi_G$ and $\pi_L$ for the projections from $G\times L$ onto $G$ and $L$, respectively. We further utilise the notation $A\otimes B$ for the set $\{a\otimes b \; | \; a\in A, \; b\in B \}$ and, for $x\in X\geq G$, also $\prescript{x}{}{V}$ for the $F(\prescript{x}{}{G})$-module with underlying $F$-vector space $\{ x\} \otimes V$ with $\prescript{x}{}{G}$-action given by $\prescript{x}{}{g} (x\otimes v) = x\otimes (gv)$.

\begin{lemma}\label{le:Brauer properties}
    Let $R$ and $T$ be $p$-subgroups of finite groups $G$ and $G\times L$, respectively. Suppose that $U,V$ are $p$-permutation $FG$-modules and $W$ is a $p$-permutation $FL$-module and $x\in X\geq G$. Then:
    \begin{enumerate}
        \item $(U\oplus V)(R) = U(R)\oplus V(R)$,
        \item $\left(\prescript{x}{}{V}\right)\left(\prescript{x}{}{R}\right) = \prescript{x}{}{(V(R))}$,
        \item $(V\boxtimes W)(T) = \left(V(\pi_G(T))\boxtimes W(\pi_L(T))\right)\Res_{N_{G\times L}(T)}$,
        \item $\left(V\Res_{N_G(R)}\right)(R) = V(R)$.
    \end{enumerate}
\end{lemma}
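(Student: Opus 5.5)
We prove the four parts directly from the definition of $V(R)$ as the span of the $R$-fixed elements of a basis $\mathcal{I}_R(V)$ that is invariant under a Sylow $p$-subgroup $Q\geq R$. Throughout we use the stated fact that $V(R)$ does not depend on the choice of basis. So in each part the task is to exhibit a suitable invariant basis of the module on the left and identify its $R$-fixed (or $T$-fixed) members.

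For (1), we take $Q$-invariant bases $\mathcal{I}_R(U)$ and $\mathcal{I}_R(V)$ for the same Sylow $Q\geq R$. Their union is a $Q$-invariant basis of $U\oplus V$, and an element of the union is $R$-fixed exactly when it is $R$-fixed in its own summand. This gives the equality as $FN_G(R)$-modules.

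For (2), we note that $\prescript{x}{}{Q}$ is a Sylow $p$-subgroup of $\prescript{x}{}{G}$ containing $\prescript{x}{}{R}$. Hence $x\otimes\mathcal{I}_R(V)$ is an $\prescript{x}{}{Q}$-invariant basis of $\prescript{x}{}{V}$. Its $\prescript{x}{}{R}$-fixed elements are exactly the $x\otimes v$ with $v$ an $R$-fixed element of $\mathcal{I}_R(V)$. It remains to compare the actions: $N_{\prescript{x}{}{G}}(\prescript{x}{}{R})=\prescript{x}{}{N_G(R)}$, and under this identification the action on $x\otimes V(R)$ is the conjugated action.

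For (4), note that $R$ lies in a Sylow subgroup $Q'$ of $N_G(R)$, and $Q'$ lies in a Sylow subgroup $Q$ of $G$. A $Q$-invariant basis of $V$ is then $Q'$-invariant, so it may serve as $\mathcal{I}_R(V\Res_{N_G(R)})$. Taking the $R$-fixed elements therefore produces the same subspace $V(R)$ with the same $N_G(R)$-action.

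Part (3) is the main step. Put $R_1=\pi_G(T)$ and $R_2=\pi_L(T)$; these are $p$-groups. Choose Sylow subgroups $Q_1\geq R_1$ of $G$ and $Q_2\geq R_2$ of $L$. Then $Q_1\times Q_2$ is Sylow in $G\times L$ and contains $T\leq R_1\times R_2$. The basis $\mathcal{I}_{R_1}(V)\otimes\mathcal{I}_{R_2}(W)$ is $Q_1\times Q_2$-invariant. The key claim is that a basis vector $v\otimes w$ is $T$-fixed if and only if $v$ is $R_1$-fixed and $w$ is $R_2$-fixed. One direction is clear. For the other, take $(g,l)\in T$. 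Then $(g,l)(v\otimes w)=gv\otimes lw$ is again a pure tensor of basis vectors, and it equals $v\otimes w$ only if $gv=v$ and $lw=w$. Letting $(g,l)$ range over $T$ covers all of $R_1$ and all of $R_2$. So the $T$-fixed part is $V(R_1)\otimes W(R_2)$. Finally, $N_{G\times L}(T)\leq N_G(R_1)\times N_L(R_2)$, since normalising $T$ normalises both of its projections. Hence this subspace carries the restricted action of $V(R_1)\boxtimes W(R_2)$, which completes (3).
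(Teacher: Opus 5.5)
Your proposal is correct and follows the paper's proof essentially verbatim. You make the same choices of invariant bases (a disjoint union for (1), $\{x\}\otimes\mathcal{I}_R(V)$ for (2), and $\mathcal{I}_{\pi_G(T)}(V)\otimes\mathcal{I}_{\pi_L(T)}(W)$ for (3)), take fixed points, and use the same Sylow-tower argument $R\leq Q'\leq Q$ for (4). In (3) you also spell out two details the paper leaves implicit: a pure tensor of basis vectors is $T$-fixed exactly when its factors are fixed by $\pi_G(T)$ and $\pi_L(T)$, and $N_{G\times L}(T)\leq N_G(\pi_G(T))\times N_L(\pi_L(T))$, which is what makes the stated restriction meaningful.
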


\begin{proof}
    Let $Q\geq R$ be a Sylow $p$-subgroup of $G$. We select bases $\mathcal{I}_Q(U) = \mathcal{I}_R(U)$, $\mathcal{I}_Q(V) = \mathcal{I}_R(V)$, $\mathcal{I}_{\pi_G(T)}(V)$ and $\mathcal{I}_{\pi_L(T)}(W)$ of $FG$-modules $U$, $V$ and $V$ and $FL$-module $W$, respectively. Then we can choose $\mathcal{I}_R(U\oplus V) = \mathcal{I}_R(U)\sqcup \mathcal{I}_R(V)$, $\mathcal{I}_{\prescript{x}{}{R}}(\prescript{x}{}{V})=\{x\}\otimes \mathcal{I}_R(V)$ and $\mathcal{I}_T(V\boxtimes W) = \mathcal{I}_{\pi_G(T)}(V)\otimes \mathcal{I}_{\pi_L(T)}(W)$. We obtain (1)--(3) by taking $R$-, $\prescript{x}{}{R}$- and $T$-invariant elements of these three sets, respectively.

    To prove (4), let $Q'\geq R$ be a Sylow $p$-subgroup of $N_G(R)$ and $Q\geq Q'$ be a Sylow $p$-subgroup of $G$. Then a $Q$-invariant basis of $V$ is also a $Q'$-invariant basis of $V$, and the result follows by taking $R$-invariant elements of such a basis.
\end{proof}

We introduce some more notation. If $K,L\leq G$ are finite groups, we denote by $G/_LK$ a set of left coset representatives of $G/K$ such that for any $x,y\in G/_LK$ with $LxK=LyK$, there exists $l\in L$ such that $x=ly$. Note that such a set always exists: for each orbit of the action of $L$ on the left cosets of $K$ in $G$, we select $x\in G$ from one of these cosets and add to $G/_LK$ elements $l_1x, l_2x,\dots, l_tx$, one for each coset in the given orbit (for some $l_1,l_2,\dots,l_t\in L$).

\begin{lemma}\label{le:induced p-basis}
    Let $K\leq G$ be two finite groups and $Q$ be a Sylow $p$-subgroup of $G$. If $V$ is a $p$-permutation module of $FK$, then $V\Ind^G = FG\otimes_{FK} V$ is a $p$-permutation module of $FG$ with a $Q$-invariant basis given by $\mathcal{I}_Q\left(V\Ind^G\right):=\{ g\otimes v \;\vert\; g\in G/_QK, \; v\in \mathcal{I}_{K\cap Q^g}(V)\}$. 
\end{lemma}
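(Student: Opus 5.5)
We will show that the proposed set is a basis of $V\Ind^G$ and that $Q$ permutes it. Recall $V\Ind^G = FG\otimes_{FK}V = \bigoplus_{g\in G/K} g\otimes V$ as vector spaces, for any set of left coset representatives; in particular this holds for $G/_QK$. So it suffices to check that, for each $g\in G/_QK$, the set $\mathcal{I}_{K\cap Q^g}(V)$ is a basis of $V$, and that the union is permuted by $Q$. The first point is immediate: $K\cap Q^g$ (with $Q^g=g^{-1}Qg$) is a $p$-subgroup of $K$, and $\mathcal{I}_{K\cap Q^g}(V)$ is by definition a $Q'$-invariant basis of $V$ for some Sylow $p$-subgroup $Q'$ of $K$ containing $K\cap Q^g$. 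We must make the choices coherently along each $Q$-orbit, so we fix the bases as follows.

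Decompose $G/_QK$ into $Q$-orbits $Q$-orbit by $Q$-orbit. By construction of $G/_QK$, each orbit is of the form $\{l_1x,\dots,l_tx\}$ with $l_i\in Q$, and we may take $l_1=1$. We fix one basis $\mathcal{I}_x:=\mathcal{I}_{K\cap Q^x}(V)$ for the representative $x$. The key observation is that $Q^{l_ix}=x^{-1}l_i^{-1}Ql_ix=Q^x$. Hence $K\cap Q^{l_ix}=K\cap Q^x$ for all $i$, and we may (and do) use the same basis $\mathcal{I}_x$ for every element of the orbit. This is consistent with the notation, since $\mathcal{I}_R(V)$ depends only on $R$.

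It remains to check $Q$-invariance. Take $q\in Q$ and $l_ix$ in the orbit. Then $ql_ixK=l_jxK$ for some $j$, since the orbit exhausts the $Q$-orbit of cosets and the representatives are distinct cosets. So $ql_ix=l_jxk$ for some $k\in K$. Solving, $k=x^{-1}l_j^{-1}ql_ix\in K\cap Q^x$. Therefore, for $v\in\mathcal{I}_x$,
\[
q(l_ix\otimes v)=l_jxk\otimes v = l_jx\otimes kv,
\]
and $kv\in\mathcal{I}_x$ because $\mathcal{I}_x$ is invariant under a Sylow subgroup of $K$ containing $K\cap Q^x$. Thus $Q$ permutes the set. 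Since $Q$ is a Sylow $p$-subgroup of $G$, this shows that $V\Ind^G$ is a $p$-permutation module with the stated $Q$-invariant basis.

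The only genuinely delicate step is the bookkeeping in the second paragraph. We need $G/_QK$ chosen so that orbit elements differ by left multiplication by elements of $Q$, which forces the intersection subgroups $K\cap Q^g$ to be constant on orbits; without that, the element $k$ above would not lie in the subgroup stabilising the chosen basis. The remaining verifications, namely that we have a basis and the explicit action computation, are routine.
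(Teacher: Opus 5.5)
Your proof is correct and follows the paper's argument. You write $qg=g'k$ with $g'=q'g$ for some $q'\in Q$ (by the defining property of $G/_QK$), deduce $k\in K\cap Q^g$, and conclude that $q(g\otimes v)=g'\otimes kv$ lies in the set. Your explicit observation that $K\cap Q^{g'}=K\cap Q^{g}$, so that one basis serves both $g$ and $g'$, is used only implicitly in the paper; it is exactly what guarantees that $g'\otimes kv$ belongs to the proposed basis.
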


\begin{proof}
    The set $\mathcal{I}_Q\left(V\Ind^G\right)$ is a basis of $V\Ind^G = FG\otimes_{FK} V$, so we only need to check that it is invariant under the action of $Q$. For $q\in Q$ and $g\in G/_QK$ we have $qg =g'k$ for some $g'\in G/_QK$ and $k\in K$. Using the defining property of $G/_QK$, there is $q'\in Q$ such that $g' = q'g$, so $k=(g')^{-1}qg = g^{-1}(q')^{-1}qg\in K\cap Q^g$. Therefore, for any $v\in \mathcal{I}_{K\cap Q^g}(V)$, we have $q (g\otimes v) = g' \otimes (kv)\in \mathcal{I}_Q\left(V\Ind^G\right)$ since $\mathcal{I}_{K\cap Q^g}(V)$ is $k$-invariant.
\end{proof}

\begin{remark}\label{re:Mackey}
    Instead of `guessing' this formula, one can derive it from Mackey's theorem applied to $V\Ind^G\Res_Q$, which allows for finding an invariant basis for each direct summand separately.
\end{remark}

Using the notation $A^G$ for fixed points of group $G$ acting on some set $A$, we can identify elements of $\mathcal{I}_Q\left(V\Ind^G\right)$ fixed by the permutation action of $R\leq Q$.

\begin{lemma}\label{le:induced fixed}
    Let $K\leq G$ be two finite groups, $Q$ be a Sylow $p$-subgroup of $G$, $R\leq Q$ and $V$ be a $p$-permutation $FK$-module. Letting $\mathcal{I}_Q\left(V\Ind^G\right):=\{ g\otimes v \;\vert\; g\in G/_QK, \; v\in \mathcal{I}_{K\cap Q^g}(V)\}$, we have $\left(\mathcal{I}_Q\left(V\Ind^G\right)\right)^R = \{ g\otimes v \;\vert\; g\in G/_QK \textnormal{ s.t. } R^g\leq K, \; v\in (\mathcal{I}_{K\cap Q^g}(V))^{R^g} \}$.
\end{lemma}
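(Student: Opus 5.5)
The claim is a direct element-by-element check. The work is in the permutation action of $Q$ on the basis $\mathcal{I}_Q\left(V\Ind^G\right)$, which the proof of Lemma~\ref{le:induced p-basis} computes explicitly. Let $r\in R\leq Q$ and $g\in G/_QK$. From that proof, we can write $rg = g'k$ with $g'\in G/_QK$ and $k = g^{-1}(q')^{-1}rg\in K\cap Q^g$, where $g'=q'g$ for some $q'\in Q$. Then $r(g\otimes v) = g'\otimes kv$, and $kv\in \mathcal{I}_{K\cap Q^g}(V)$. So the whole proof reduces to deciding when $g'=g$ and $kv=v$ for all $r\in R$.

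\textbf{Step 1 (first factor fixed).} Since $G/_QK$ is a set of coset representatives, $g'\otimes kv = g\otimes v$ forces $g'K=gK$, hence $g'=g$. Indeed, the elements $g\otimes v$ for distinct representatives $g$ lie in distinct summands $g\otimes V$ of $V\Ind^G$, and $kv\neq 0$. Now $g'=g$ holds exactly when $rg\in gK$, that is, when $g^{-1}rg\in K$. Requiring this for all $r\in R$ gives precisely $R^g = g^{-1}Rg\leq K$; this also pins down the convention $R^g = g^{-1}Rg$ used in the statement. In that case $k = g^{-1}rg$: the factorisation $rg = g'k$ is unique once $g'$ is fixed, and it is $g\cdot(g^{-1}rg)$.

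\textbf{Step 2 (second factor fixed).} Given $R^g\leq K$, every element of $R$ fixes $g\otimes v$ exactly when $(g^{-1}rg)v = v$ for all $r\in R$. That is, $v$ is fixed by $R^g$. Note that $R^g\leq Q^g$, so $R^g\leq K\cap Q^g$. Since $\mathcal{I}_{K\cap Q^g}(V)$ is invariant under $K\cap Q^g$, the set of $R^g$-fixed elements is meaningful and equals $(\mathcal{I}_{K\cap Q^g}(V))^{R^g}$.

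\textbf{Conclusion.} Combining both steps gives both inclusions of the stated equality. The only delicate point, and so the main obstacle, is Step~1. There one must use that $g\otimes v$ and $g'\otimes v'$ with $g\neq g'$ in $G/_QK$ are distinct basis elements, which follows from the direct sum decomposition $V\Ind^G=\bigoplus_{g\in G/_QK} g\otimes V$. Once that is recorded, the rest is routine bookkeeping with conjugation conventions.
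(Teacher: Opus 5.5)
Your proof is correct and follows the same argument as the paper. The paper's one-line proof observes that $g\otimes v$ is $R$-fixed if and only if $Rg\subseteq gK$ and $g^{-1}Rg$ fixes $v$. You simply spell out the details the paper leaves implicit: the directness of the decomposition into the summands $g\otimes V$, the identification $k=g^{-1}rg$ once $g'=g$, and the inclusion $R^g\leq K\cap Q^g$.
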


\begin{proof}
    An element $g\otimes v\in \mathcal{I}_Q\left(V\Ind^G\right)$ is fixed by $R$ if and only if $Rg\subseteq gK$ and $g^{-1}Rg$ fixes $v$. These are precisely the defining conditions for $\left(\mathcal{I}_Q\left(V\Ind^G\right)\right)^R$.
\end{proof}

The explicit description of the $R$-invariant basis of $V\Ind^G$ allows us to obtain the formula for $V\Ind^G(R)$. We start with the case when $R$ is a normal subgroup of $G$.

\begin{lemma}\label{le:Normal p-group}
    Let $K\leq G$ be finite groups and $R\unlhd G$ be a $p$-group. Then for a $p$-permutation $FK$-module $V$ we have
    \begin{align*}
        V\Ind^G(R) = 
        \begin{cases*}
            V(R)\Ind^G &\textnormal{if} $R\leq K$\textnormal{;}\\
            0 & \textnormal{otherwise.}
        \end{cases*}
    \end{align*}
\end{lemma}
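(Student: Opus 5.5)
The plan is to compute $V\Ind^G(R)$ directly from the explicit $Q$-invariant basis of Lemma~\ref{le:induced p-basis}, using Lemma~\ref{le:induced fixed} to identify the $R$-fixed basis vectors, with $Q$ a Sylow $p$-subgroup of $G$. Since $R\unlhd G$ is a $p$-group, $R$ lies in every Sylow $p$-subgroup, so in particular $R\leq Q$ and Lemma~\ref{le:induced fixed} applies. Moreover $R^g=R$ for every $g\in G$, so the condition $R^g\leq K$ becomes simply $R\leq K$, independent of $g$. Hence if $R\not\leq K$, no element of $\mathcal{I}_Q(V\Ind^G)$ is $R$-fixed and $V\Ind^G(R)=0$. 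This settles the second case.

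Now assume $R\leq K$. By Lemma~\ref{le:induced fixed}, the $R$-fixed basis elements are $g\otimes v$ with $g\in G/_QK$ and $v\in(\mathcal{I}_{K\cap Q^g}(V))^{R}$. For each $g$, the group $K\cap Q^g$ is a $p$-subgroup of $K$ containing $R$, because $R=R^g\leq Q^g$. So $\mathcal{I}_{K\cap Q^g}(V)$ is a legitimate choice of $\mathcal{I}_R(V)$ in the sense of the definition of the Brauer morphism: it is invariant under a Sylow $p$-subgroup of $K$ containing $K\cap Q^g\geq R$. Therefore the span of $(\mathcal{I}_{K\cap Q^g}(V))^R$ equals $V(R)$, and this subspace does not depend on the choice of basis. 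It follows that
\[
V\Ind^G(R)=\bigoplus_{g\in G/_QK} g\otimes V(R)
\]
as subspaces of $FG\otimes_{FK}V$. Since $G/_QK$ is a full set of left coset representatives of $K$ in $G$, this subspace is exactly the image of $FG\otimes_{FK}V(R)$ inside $FG\otimes_{FK}V$.

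It remains to identify this with $V(R)\Ind^G$ as an $FG$-module; note $N_G(R)=G$ and $N_K(R)=K$. Here $V(R)$ is an $FK$-submodule of $V$, and the natural map $FG\otimes_{FK}V(R)\to FG\otimes_{FK}V$ is injective because $FG$ is free as a right $FK$-module. Its image is the subspace computed above, so $V\Ind^G(R)\cong V(R)\Ind^G$ as $FG$-modules.

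The only delicate point is the middle step: justifying that each $\mathcal{I}_{K\cap Q^g}(V)$, whose index subgroup varies with $g$, is a valid basis for computing $V(R)$. I will handle this by invoking the stated independence of $V(R)$ from the choice of $\mathcal{I}_R(V)$, which needs only $R\leq K\cap Q^g$. That containment holds precisely because $R$ is normal in $G$.
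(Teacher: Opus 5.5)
Your proof is correct and is essentially the paper's argument: by Lemma~\ref{le:induced fixed} and $R^g=R$, the $R$-fixed part of $\mathcal{I}_Q(V\Ind^G)$ is empty when $R\not\leq K$. Otherwise it consists of the $g\otimes v$ with $v\in(\mathcal{I}_{K\cap Q^g}(V))^R$, each such set spanning $V(R)$, so $V\Ind^G(R)$ is spanned by $G/_QK\otimes V(R)$, i.e.\ it is $V(R)\Ind^G$. You are more careful than the paper in checking $R\leq K\cap Q^g$, which makes $\mathcal{I}_{K\cap Q^g}(V)$ a valid choice of $\mathcal{I}_R(V)$; but treating $V(R)$ as a fixed $FK$-submodule of $V$ relies, as the paper does, on its informal convention that $V(R)$ does not depend on the chosen basis, which strictly holds only up to the canonical identification with the Brauer quotient.
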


\begin{proof}
    Let $Q$ be a Sylow $p$-subgroup of $G$ (which necessarily contains $R$ as $R\unlhd G$). Since $R$ is a normal subgroup of $G$, from Lemma~\ref{le:induced fixed}, a basis of $V\Ind^G(R)$ is $\left(\mathcal{I}_Q\left(V\Ind^G\right)\right)^R = \{ g\otimes v \;\vert\; g\in G/_QK \textnormal{ s.t. } R\leq K, \; v\in (\mathcal{I}_{K\cap Q^g}(V))^{R} \}$, which is empty if $R\not\leq K$. If $R\leq K$, then $(\mathcal{I}_{K\cap Q^g}(V))^{R}$ is a basis of $V(R)$, and so $V\Ind^G(R)$ is spanned by $G/_QK\otimes V(R)$; thus it equals $V(R)\Ind^G$.
\end{proof}

We can immediately deduce the general formula for any subgroup $R$ of $G$. This is `Mackey's theorem for the Brauer morphism' stated (in a slightly reworded version) in Section~\ref{sec:outlineproof}. 

\setcounter{section}{1}
\setcounter{theorem}{6}

\begin{proposition}
    Let $K\leq G$ be two finite groups and $R\leq G$ be a $p$-group. Then for a $p$-permutation $FK$-module $V$, we have
    \[
    V\Ind^G(R) \cong \bigoplus_{\substack{x\in N_G(R)\backslash G/K \\ R\leq \prescript{x}{}{K}}} \left(\prescript{x}{}{V}\right)(R)\Ind^{N_G(R)}.
    \]
\end{proposition}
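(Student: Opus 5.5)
The plan is to reduce to Lemma~\ref{le:Normal p-group} by first restricting to $N_G(R)$ and then applying the ordinary Mackey decomposition. By Lemma~\ref{le:Brauer properties}(4), $V\Ind^G(R) = \left(V\Ind^G\Res_{N_G(R)}\right)(R)$, and $V\Ind^G\Res_{N_G(R)}$ is again a $p$-permutation module, so its Brauer quotient at $R$ makes sense. Mackey's theorem gives
\[
V\Ind^G\Res_{N_G(R)} \cong \bigoplus_{x\in N_G(R)\backslash G/K} \left(\prescript{x}{}{V}\Res_{N_G(R)\cap \prescript{x}{}{K}}\right)\Ind^{N_G(R)},
\]
and each summand is a $p$-permutation module, being built from restriction, conjugation and induction.

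Next I apply Lemma~\ref{le:Brauer properties}(1) to split the Brauer morphism over this direct sum. For each double coset representative $x$, I use Lemma~\ref{le:Normal p-group} with the ambient group $N_G(R)$, the subgroup $N_G(R)\cap\prescript{x}{}{K}$, and the normal $p$-subgroup $R\unlhd N_G(R)$. If $R\not\leq \prescript{x}{}{K}$, then $R\not\leq N_G(R)\cap \prescript{x}{}{K}$, so the summand vanishes. Otherwise the summand becomes
\[
\left(\prescript{x}{}{V}\Res_{N_G(R)\cap \prescript{x}{}{K}}\right)(R)\Ind^{N_G(R)}.
\]

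It remains to identify $\left(\prescript{x}{}{V}\Res_{N_G(R)\cap \prescript{x}{}{K}}\right)(R)$ with $\left(\prescript{x}{}{V}\right)(R)$, where the latter is computed in $\prescript{x}{}{K}$ and viewed as an $N_{\prescript{x}{}{K}}(R)$-module. Since $N_{\prescript{x}{}{K}}(R)=N_G(R)\cap\prescript{x}{}{K}$, this is exactly Lemma~\ref{le:Brauer properties}(4) applied to the group $\prescript{x}{}{K}$. Lemma~\ref{le:Brauer properties}(2) also guarantees that $\prescript{x}{}{V}$ is a $p$-permutation $\prescript{x}{}{K}$-module with the expected invariant basis. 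The induction in the stated formula is then read as going from $N_G(R)\cap \prescript{x}{}{K}$ up to $N_G(R)$, which completes the identification.

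The main obstacle is bookkeeping rather than anything conceptual. One must check that Lemma~\ref{le:Normal p-group} applies as stated, since its proof uses a Sylow subgroup of the ambient group and that group now has to be $N_G(R)$. One must also check that the isomorphisms (Mackey's theorem and the Brauer morphism, which is defined through bases) are compatible, i.e.\ that the Brauer morphism respects isomorphisms of $p$-permutation modules. I would justify this by the stated fact that $V(R)$ is independent of the choice of invariant basis: an isomorphism carries one invariant basis to another. Hence only isomorphism, not equality, is claimed.
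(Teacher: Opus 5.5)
Your proposal is correct and follows essentially the same route as the paper. You pass to $N_G(R)$ via Lemma~\ref{le:Brauer properties}(4), expand by Mackey's theorem and Lemma~\ref{le:Brauer properties}(1), discard the summands with $R\not\leq\prescript{x}{}{K}$ using Lemma~\ref{le:Normal p-group} for $R\unlhd N_G(R)$, and finish with Lemma~\ref{le:Brauer properties}(4) in $\prescript{x}{}{K}$ using $N_{\prescript{x}{}{K}}(R)=N_G(R)\cap\prescript{x}{}{K}$.
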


\begin{proof}
    By Lemma~\ref{le:Brauer properties}(4), the left-hand side equals $V\Ind^G\Res_{N_G(R)}(R)$, which can be expanded using Mackey's theorem (and Lemma~\ref{le:Brauer properties}(1)) as
    \[
    \bigoplus_{x\in N_G(R)\backslash G/K} \left((\prescript{x}{}{V})\Res_{\prescript{x}{}{K}\cap N_G(R)}\Ind^{N_G(R)}\right)(R).
    \]
    Since $R\unlhd N_G(R)$, we can apply Lemma~\ref{le:Normal p-group} to each summand. In particular, we can only sum over $x$ such that $R\leq \prescript{x}{}{K}$, in which case $\prescript{x}{}{K}\cap N_G(R) = N_{\prescript{x}{}{K}}(R)$, and we obtain 
    \[
    \bigoplus_{\substack{x\in N_G(R)\backslash G/K \\ R\leq \prescript{x}{}{K}}} \left( (\prescript{x}{}{V})\Res_{N_{\prescript{x}{}{K}}(R)}(R)\right)\Ind^{N_G(R)} = \bigoplus_{\substack{x\in N_G(R)\backslash G/K \\ R\leq \prescript{x}{}{K}}} (\prescript{x}{}{V})(R)\Ind^{N_G(R)},
    \]
    using Lemma~\ref{le:Brauer properties}(4) once more, as required.
\end{proof}

\setcounter{section}{6}
\setcounter{theorem}{8}

Note that one may replace the condition $R\leq \prescript{x}{}{K}$ in the direct sum in Proposition~\ref{pr:Mackey for Brauer} by $R^x\leq K$, and thus think of the direct sum as running through all $G$-conjugates of $R$ lying in $K$, up to $K$-conjugation.

The following example has already been proved in more generality in \cite[Lemma~4.2 and Proposition~4.3]{GiannelliWildonFoulkesandDecomposition15}, but we include it here to showcase Proposition~\ref{pr:Mackey for Brauer} in a friendly (and relevant) setting. We also use the example to introduce some more necessary notation: we write $[a,b]$ for the set of integers $c$ with $a\leq c\leq b$, $[m]$ for $[1,m]$, $S_{\Omega}$ for the symmetric group of permutations of a set $\Omega$ and $S_2\wr S_{[a,b]}$ for the centraliser of $(2a-1\; 2a)(2a+1\; 2a+2)\dots (2b-1\; 2b)$ in $S_{[2a-1, 2b]}$. In default, $S_n = S_{[n]}$ and $S_2\wr S_m = S_2\wr S_{[m]}\leq S_{2m}$. We also set $g_r:= \prod_{i=0}^{r-1} (ip + 1\; ip + 2\; \dots\; ip + p)$ for integer $r\geq 0$ and $R_r$ to be the cyclic group of order $p$ (or $1$ if $r=0$) generated by $g_r$. 

\begin{example}\label{ex:Foulkes}
    Let $G=S_{2m}$, $K=S_2\wr S_m$ and $V$ be the trivial $FK$-module (so $V\Ind^G$ is the Foulkes module $H^{(2^m)}$). If $r\geq 0$ is an integer such that $rp\leq 2m$, then a $G$-conjugate $R_r^x$ of $R_r$ lies in $K$ if and only if the $r$ (pairwise disjoint) $p$-cycles of $g_r^x$ can be divided into $r/2$ pairs of the form $(a_1\; a_2\; \dots \; a_p)$ and $(a_1\pm 1\; a_2 \pm 1\; \dots \; a_p \pm 1)$, where `+' is chosen when $a_i$ is odd, and `-' is chosen when $a_i$ is even. In particular, no conjugate of $R_r$ lies in $K$ if $r$ is odd, so by Proposition~\ref{pr:Mackey for Brauer} $H^{(2^m)}(R_r) = 0$ for $r$ odd.

    If $r$ is even, we pick $y\in S_{rp}$ such that $g_r^y = \prod_{i=0}^{r/2 - 1}k_i$ where $k_i := (2pi +1\; 2pi + 3\; \dots \; 2pi + 2p-1)(2pi+2\; 2pi+4 \; \dots \; 2pi+ 2p)$. Then all the possible $G$-conjugates of $g_r$ in $K$ are $K$-conjugated to $g_r^y$ (and are thus $K$-conjugated to each other): indeed, if $g_r^x = \prod_{i=0}^{r/2 - 1}h_i$, where each $h_i$ is the product of two $p$-cycles of the above form $(a_1\; a_2\; \dots \; a_p)$ and $(a_1\pm 1\; a_2 \pm 1\; \dots \; a_p \pm 1)$, one can define $z\in K$ on $2pi+1, 2pi+2,\dots, 2pi+2p$ for $0\leq i\leq r/2-1$ by requiring $h_i^z = k_i$, and then map the fixed points of $g_r^y$ to the fixed points of $g_r^x$ in any way so that $z(2j-1) = z(2j)-1$ for any fixed point $2j-1$ of $g_r^y$; this guarantees $g_r^y = g_r^{xz}$. 
    
    Thus, we can take $y$ to be the only element of $N_G(R_r)\backslash G/ K$ such that $R \leq \prescript{y}{}{K}$. Proposition~\ref{pr:Mackey for Brauer} and Example~\ref{ex:sgn} give us

    \begin{equation}\label{eq:Foulkes weak}
        H^{(2^m)}(R_r) \cong \mathbf{1}(R_r)\Ind^{N_G(R_r)} = \mathbf{1}\Res_{N_{\prescript{y}{}{K}}(R_r)}\Ind^{N_G(R_r)}.
    \end{equation}
    
    Since $N_G(R_r) = N_{S_{rp}}(R_r)\times S_{[rp+1,2m]}$ and, in turn, $N_{\prescript{y}{}{K}}(R_r) = N_{S_{rp}\cap\prescript{y}{}{K}}(R_r) \times S_2\wr S_{[rp/2 + 1, m]}$, we can rewrite $H^{(2^m)}(R_r)$ as
    \[
    \left(\mathbf{1}\Ind^{N_{S_{rp}}(R_r)}\right) \boxtimes \left(\mathbf{1}\Ind^{S_{[rp+1,2m]}}\right),
    \]
    using the trivial modules of $FN_{S_{rp}\cap\prescript{y}{}{K}}(R_r) = FN_{\prescript{y}{}{(S_2\wr S_{rp/2})}}(R_r)$ and $F\left(S_2\wr S_{[rp/2 + 1, m]}\right)$, respectively. We can identify the first module in the tensor product with $H^{(2^{rp/2})}(R_r)$ using \eqref{eq:Foulkes weak} with $m=rp/2$. Therefore, for even $r\leq 2m/p$ we have
    \begin{equation}\label{eq:Foulkes strong}
        H^{(2^m)}(R_r) \cong H^{(2^{rp/2})}(R_r)\boxtimes H^{(2^{m-rp/2})},
    \end{equation}
    where the second element in the tensor product is considered as a module of $FS_{[rp+1,2m]}\cong FS_{2m-rp}$.
\end{example}

To enhance this example, we use the following fact, stated in \cite[Theorem~2.14(iii)]{GiannelliDecomposition15} for indecomposable modules.

\begin{theorem}\label{th:blocks and Brauer}
    Let $V$ be a $p$-permutation $FS_n$-module belonging to the block labeled by $p$-residue sequence $\tau$ and suppose that $R$ is a $p$-subgroup of $S_n$ with support $[rp]$. Then the $FN_{S_n}(R)\cong FN_{S_{rp}}(R) \times FS_{[rp+1,n]}$-module $V(R)$ is isomorphic to a direct sum of modules $U\boxtimes W$ with $W$ lying in the block labeled by $p$-residue sequence $\tau - \bm{r}$. In particular, $V(R)$ is zero if $\tau - \bm{r}$ is not a $p$-residue sequence of a partition of $n-rp$.
\end{theorem}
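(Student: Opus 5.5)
The plan is to reduce to indecomposable summands and use the central character / Brauer homomorphism description of blocks. Block idempotents commute with taking direct summands, and Lemma~\ref{le:Brauer properties}(1) says the Brauer morphism commutes with direct sums. So it suffices to treat an indecomposable $p$-permutation $FS_n$-module $V$ in the block $B_\tau$, labeled by $\tau$, that is, by a $p$-core $\gamma$ and weight $w$ with $\tau=C(\gamma)+\bm{w}$. We may also assume $V(R)\neq 0$, since otherwise there is nothing to prove.

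The key input is Brauer's theory relating blocks to the Brauer homomorphism on centres. Write $e_\tau$ for the block idempotent of $FS_n$ and $\mathrm{Br}_R\colon (FS_n)^R\to FC_{S_n}(R)$ for the Brauer homomorphism. For a $p$-permutation module one has $e_\tau V=V$, and the $N_{S_n}(R)$-action on $V(R)$ is compatible with $\mathrm{Br}_R$. Concretely, for $z\in (FS_n)^{R}$ and $v$ in the span of the $R$-fixed basis elements, the image of $zv$ in $V(R)$ equals $\mathrm{Br}_R(z)\bar v$. This is Brou\'e's observation, as $V(R)=V^R/\sum_{S<R}\mathrm{tr}_S^R V^S$. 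Hence $V(R)=\mathrm{Br}_R(e_\tau)V(R)$.

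Next we compute $\mathrm{Br}_R(e_\tau)$. The centraliser satisfies $C_{S_n}(R)=C_{S_{rp}}(R)\times S_{[rp+1,n]}$. By the classical result of Brou\'e (or Puig) for symmetric groups, going back to the proof of Nakayama's conjecture via Brauer pairs, $\mathrm{Br}_R(e_\tau)$ equals the block idempotent of $FS_{[rp+1,n]}$ labeled by $\tau-\bm{r}$ when that is a $p$-content of a partition of $n-rp$, and equals $0$ otherwise. This can be seen as follows. The central character of the block $\tau$ is determined by class sums of $p$-regular classes. The element $\mathrm{Br}_R(e_\tau)$ is a sum of block idempotents of $C_{S_n}(R)$. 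The Brauer correspondence says $e_\tau$ covers exactly the blocks $b$ of $C(R)$ with $b^{S_n}=B_\tau$. The first-main-theorem computation for $S_n$, namely that a block of $S_{n-rp}$ with core $\gamma$ induces to the block of $S_n$ with the same core, identifies these as $FC_{S_{rp}}(R)\otimes e'_{\tau-\bm{r}}$. Here we use that $C_{S_{rp}}(R)$ has a normal $p$-subgroup containing its own centraliser, so it has a single block.

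Combining the two steps, $V(R)$ is killed by all idempotents of $FS_{[rp+1,n]}$ other than $e'_{\tau-\bm{r}}$. Decomposing $V(R)$ as an $FN_{S_{rp}}(R)\times FS_{[rp+1,n]}$-module, every indecomposable summand is of the form $U\boxtimes W$ with $W$ indecomposable. This holds because $FN_{S_{rp}}(R)$ modules are $F$-split after extending the field, or alternatively we decompose only with respect to block idempotents. Each such $W$ lies in the block $\tau-\bm{r}$, and $V(R)=0$ when no such block exists. The main obstacle is the identification of $\mathrm{Br}_R(e_\tau)$. I would cite it from the literature (e.g.\ Brou\'e's computation of Brauer pairs for symmetric groups, or \cite[Theorem~2.14]{GiannelliDecomposition15}). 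Otherwise I would verify it via the central character formula on $p$-regular class sums, using that $\mathrm{Br}_R$ sends a class sum of $S_n$ to the sum of the elements of that class lying in $C_{S_n}(R)$.
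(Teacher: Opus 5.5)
The paper gives no argument of its own here. It quotes \cite[Theorem~2.14(iii)]{GiannelliDecomposition15} for indecomposable $V$, and the general case follows from Lemma~\ref{le:Brauer properties}(1). Your mechanism is the standard one behind that result, and it is sound:
\begin{itemize}
\item On the Brauer quotient, the action of $(FS_n)^R$ factors through $\mathrm{Br}_R$, so $\mathrm{Br}_R(e_\tau)$ acts as the identity on $V(R)$.
\item $C_{S_{rp}}(R)$ has a single block. For fixed-point-free $R$ it is a product of wreath products $P\wr S_k$ with $P\neq 1$ a $p$-group, so it has a self-centralising normal $p$-subgroup.
\item Hence $V(R)=e'_{\tau-\bm r}V(R)$, or $V(R)=0$, once you know that $\mathrm{Br}_R(e_\tau)$ equals $1\otimes e'_{\tau-\bm r}$ (or $0$).
\end{itemize}
That last identification carries all the weight. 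Your central-character sketch restates it rather than proves it, so it must genuinely be cited, e.g.\ Brou\'e's description of the Brauer pairs of $S_n$. The reduction to indecomposable $V$ is unnecessary.

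The genuine gap is your final step. It is false that every indecomposable $F[N_{S_{rp}}(R)\times S_{[rp+1,n]}]$-module is an outer tensor product $U\boxtimes W$. Passing to a splitting field gives this only for simple modules, or when one factor algebra is split semisimple. Decomposing with respect to block idempotents only yields $V(R)=e'_{\tau-\bm r}V(R)$, not a tensor decomposition.

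In fact the tensor form can fail for $p$-permutation modules. Take the following data:
\begin{itemize}
\item $n=p^2+p$ and $R=R_p$;
\item $x\in C_{S_{p^2}}(g_p)\setminus R$ of order $p$, permuting the $p$ cycles of $g_p$ cyclically;
\item $y$ a $p$-cycle on $[p^2+1,p^2+p]$, and $H=\langle g_p,xy\rangle\le C_{S_n}(R)$.
\end{itemize}
Then $V=F\Ind_H^{S_n}$ has $V(R)=F[(S_n/H)^R]$, which contains $F[N_{S_n}(R)/H]$ as a summand. Restrict to $L=\langle x\rangle\times\langle y\rangle$. By Mackey this restriction contains the indecomposable $p$-dimensional summand $F[L/\langle xy\rangle]$, on which neither $x$ nor $y$ acts trivially. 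However, the restriction to $L$ of any $U\boxtimes W$ is a direct sum of indecomposables $J_a\boxtimes J_b$ (products of Jordan blocks). Any such summand of dimension $p$ has $x$ or $y$ acting trivially. By Krull--Schmidt, some block component $V_\tau(R)$ is therefore not a direct sum of outer tensor products.

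So you should drop that step and state the conclusion as follows: $V(R)\Res_{S_{[rp+1,n]}}$ lies in the block $\tau-\bm r$, and $V(R)=0$ if there is no such block. This is exactly what the paper uses in Corollaries~\ref{co:FoulkesSummands} and~\ref{co:twiseted Foulkes} and in the proof of Theorem~\ref{thm:indecomposables}.
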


\begin{corollary}\label{co:FoulkesSummands}
    If $r,m\geq 0$ are integers such that $rp\leq 2m$ and $\tau$ is a composition of length $p$, then
    \begin{align*}
        H^{(2^m)}_{\tau}(R_r)\cong
        \begin{cases*}
            H^{(2^{rp/2})}(R_r)\boxtimes H^{(2^{m-rp/2})}_{\tau - \bm{r}} & \textnormal{if} r \textnormal{is even};\\
            0 & \textnormal{if} r \textnormal{is odd}.
        \end{cases*}
    \end{align*}
\end{corollary}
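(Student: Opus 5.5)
The plan is to combine the block decomposition of the Foulkes module with Lemma~\ref{le:Brauer properties}(1), equation~\eqref{eq:Foulkes strong}, and Theorem~\ref{th:blocks and Brauer}.

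\emph{Odd $r$.} By Lemma~\ref{le:Brauer properties}(1) we have $H^{(2^m)}(R_r)=\bigoplus_{\tau}H^{(2^m)}_{\tau}(R_r)$. Example~\ref{ex:Foulkes} shows that the left-hand side vanishes for odd $r$. Hence every summand $H^{(2^m)}_\tau(R_r)$ is zero, which settles this case.

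\emph{Even $r$.} Decompose the second tensor factor of \eqref{eq:Foulkes strong} into blocks:
\[
H^{(2^m)}(R_r)\cong\bigoplus_{\sigma}H^{(2^{rp/2})}(R_r)\boxtimes H^{(2^{m-rp/2})}_{\sigma}.
\]
Applying Lemma~\ref{le:Brauer properties}(1) to the block decomposition of $H^{(2^m)}$ also gives
\[
H^{(2^m)}(R_r)=\bigoplus_\tau H^{(2^m)}_\tau(R_r).
\]
By Theorem~\ref{th:blocks and Brauer}, each $H^{(2^m)}_\tau(R_r)$ is a direct sum of modules $U\boxtimes W$ with $W$ in the block $\tau-\bm r$ of $FS_{[rp+1,2m]}$.

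Now view both decompositions as $FN_{S_{rp}}(R_r)\times FS_{[rp+1,2m]}$-modules. For each $\sigma$, consider the subspace on which the central block idempotent $e_\sigma$ of $FS_{[rp+1,2m]}$ acts as the identity. In the first decomposition, this subspace is exactly $H^{(2^{rp/2})}(R_r)\boxtimes H^{(2^{m-rp/2})}_\sigma$. In the second decomposition, Theorem~\ref{th:blocks and Brauer} shows that $e_\sigma$ acts as the identity on $H^{(2^m)}_\tau(R_r)$ when $\tau-\bm r=\sigma$ and as zero otherwise. Since $\tau\mapsto\tau-\bm r$ is injective, the $e_\sigma$-subspace is $H^{(2^m)}_{\sigma+\bm r}(R_r)$. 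Setting $\tau=\sigma+\bm r$ gives the claimed isomorphism.

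The remaining possibility is that $\tau-\bm r$ is not a block label. Then both sides are zero: the left by the ``in particular'' clause of Theorem~\ref{th:blocks and Brauer}, and the right by the convention $V_\tau=0$.

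The main point to check is that the isomorphism \eqref{eq:Foulkes strong} is one of $FN_G(R_r)$-modules and that $e_\sigma$ acts through the second tensor factor. Both follow from $N_G(R_r)=N_{S_{rp}}(R_r)\times S_{[rp+1,2m]}$, so the matching of components is routine.
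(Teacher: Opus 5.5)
Your proposal is correct and follows the paper's proof. The paper also decomposes both sides of \eqref{eq:Foulkes strong} into block components and matches them using Theorem~\ref{th:blocks and Brauer}, with the odd case coming from the vanishing of $H^{(2^m)}(R_r)$ shown in Example~\ref{ex:Foulkes}; your central-idempotent argument simply spells out this matching, which the paper leaves implicit.
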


\begin{proof}
    The result follows from Theorem~\ref{th:blocks and Brauer} after decomposing $H^{(2^m)}$ and $H^{(2^{m-rp/2})}$ into block summands in \eqref{eq:Foulkes strong}.
\end{proof}

We now repeat the same game with the twisted Foulkes modules.

\begin{proposition}\label{pr:twiseted Foulkes}
    If $r,m,k\geq 0$ are integers such that $rp\leq 2m+k$ and $\tau$ is a composition of length $p$, then
    \[
    \left( H^{(2^m)}_{\tau}\boxtimes \sgn\right)\Ind^{S_{2m+k}}(R_r) \cong \bigoplus_{\substack{0\leq t\leq r/2 \\ tp\leq m \\ (r-2t)p\leq k}} \left(H^{(2^{tp})}\boxtimes\sgn\right)\Ind^{S_{rp}}(R_r) \boxtimes \left(H^{(2^{m-tp})}_{\tau - 2\bm{t}} \boxtimes\sgn\right)\Ind^{S_{2m+k-rp}}, 
    \]
    where the group $S_{2m+k-rp}$ stands for $S_{[rp+1,2m+k]}$, the three $\sgn$ are modules of $FS_{[2m+1,2m+k]}$, $FS_{[tp+1,rp]}$ and $FS_{[2m+(r-2t)p+1,2m+k]}$, respectively, and $H^{(2^{m-tp})}_{\tau - 2\bm{t}}$ is a $FS_{[rp+1,2m+(r-2t)p]}$-module.
\end{proposition}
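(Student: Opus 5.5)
We would prove Proposition~\ref{pr:twiseted Foulkes} by applying Proposition~\ref{pr:Mackey for Brauer} with $G=S_{2m+k}$, $K=S_{2m}\times S_{[2m+1,2m+k]}$, $V=H^{(2^m)}_{\tau}\boxtimes\sgn$ and $R=R_r$. The first step is to enumerate the double cosets $x\in N_G(R_r)\backslash G/K$ with $R_r^x\leq K$. Since $R_r^x$ is generated by a product of $r$ disjoint $p$-cycles, it lies in $K$ exactly when each of these cycles is supported either in $[2m]$ or in $[2m+1,2m+k]$. Let $s$ be the number of cycles lying in $[2m]$. Up to $K$-conjugation, $R_r^x$ is then determined by $s$, and we may take the representative $R_{s}\times R'_{r-s}$, where the second factor is generated by $r-s$ $p$-cycles placed at the start of $[2m+1,2m+k]$. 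The constraints are $sp\leq 2m$ and $(r-s)p\leq k$.

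For each $s$, Lemma~\ref{le:Brauer properties}(3) and Example~\ref{ex:sgn} give
\[
\left(\prescript{x}{}{V}\right)(R_r)\cong H^{(2^m)}_{\tau}(R_s)\boxtimes \sgn\Res,
\]
restricted to $N_{\prescript{x}{}{K}}(R_r)$. By Corollary~\ref{co:FoulkesSummands}, this vanishes unless $s=2t$ is even, and then
\[
H^{(2^m)}_{\tau}(R_{2t})\cong H^{(2^{tp})}(R_{2t})\boxtimes H^{(2^{m-tp})}_{\tau-2\bm{t}}.
\]
This accounts for the index set $0\leq t\leq r/2$, $tp\leq m$, $(r-2t)p\leq k$.

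Next we induce to $N_G(R_r)=N_{S_{rp}}(R_r)\times S_{[rp+1,2m+k]}$, after conjugating so that the moved points are exactly $[rp]$. The module to induce is a tensor product of four pieces:
\begin{itemize}
\item $H^{(2^{tp})}(R_{2t})$, on the $2tp$ points;
\item $\sgn$ on the $(r-2t)p$ moved points, on which $\sgn(R)=\sgn\Res$;
\item $H^{(2^{m-tp})}_{\tau-2\bm{t}}$, on $2m-2tp$ fixed points;
\item $\sgn$ on the remaining $k-(r-2t)p$ fixed points.
\end{itemize}
The relevant subgroup is $N_{\prescript{x}{}{K}}(R_r)=\bigl(N_{S_{2tp}}(R_{2t})\times N_{S_{(r-2t)p}}(R')\bigr)\times\bigl(S_{2m-2tp}\times S_{k-(r-2t)p}\bigr)$. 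Transitivity of induction then splits the result into two factors. The fixed-point factor gives $\bigl(H^{(2^{m-tp})}_{\tau-2\bm{t}}\boxtimes\sgn\bigr)\Ind^{S_{[rp+1,2m+k]}}$.

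For the moved-point factor we run the same computation in the special case $m=tp$, $k=(r-2t)p$ with the full module $H^{(2^{tp})}\boxtimes\sgn$. There the only surviving double coset is the one with $s=2t$, since the moved points must exactly fill both factors. So $\left(H^{(2^{tp})}\boxtimes\sgn\right)\Ind^{S_{rp}}(R_r)$ is precisely the induction of $H^{(2^{tp})}(R_{2t})\boxtimes\sgn$ from $N_{S_{2tp}}(R_{2t})\times N_{S_{(r-2t)p}}(R')$ up to $N_{S_{rp}}(R_r)$. Here we use that $N$ of the product splits into these factors intersected with $S_{2tp}\times S_{(r-2t)p}$. This identification, analogous to how \eqref{eq:Foulkes strong} was derived from \eqref{eq:Foulkes weak}, gives the first tensor factor.

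The main obstacle is the normaliser bookkeeping. We must check that $N_{\prescript{x}{}{K}}(R_r)$ really is the product of the moved-part and fixed-part normalisers. We must also check that distinct values of $t$ give distinct $N_G(R_r)$-double cosets while each $t$ gives only one. Both follow because $N_{S_{rp}}(R_r)$ permutes the $r$ cycles transitively, so the only $K$-invariant datum of a conjugate $R_r^x$ is how many cycles lie in $[2m]$. Finally, the block projection commutes with everything, because it acts only on the $S_{2m}$-factor. That is why $\tau$ passes through Corollary~\ref{co:FoulkesSummands} as $\tau-2\bm{t}$.
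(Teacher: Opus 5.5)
Your strategy is the paper's. You apply Proposition~\ref{pr:Mackey for Brauer} with $K=S_{2m}\times S_{[2m+1,2m+k]}$ and index the double cosets by the number $s$ of $p$-cycles landing in $[2m]$. You use Lemma~\ref{le:Brauer properties}(3), Example~\ref{ex:sgn} and Corollary~\ref{co:FoulkesSummands} to discard odd $s$, split off the points outside the support of $R_r$, and identify the remaining factor by rerunning the computation with $m=tp$, $k=(r-2t)p$.

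However, you conflate the cyclic group $R_r^x$ with a direct product, and this makes one step false. The representative for $s$ is the \emph{diagonal} cyclic group of order $p$ generated by $g_s h$, where $h$ is a product of $r-s$ disjoint $p$-cycles in $[2m+1,2m+k]$. It is not $R_s\times R'_{r-s}$, which has order $p^2$ when $0<s<r$. After conjugating so that the support is $[rp]$ and writing $g_r=g_{2t}h$, an element $(a,b)\in S_{2tp}\times S_{[2tp+1,rp]}$ normalises $R_r$ if and only if $ag_{2t}a^{-1}=g_{2t}^{j}$ and $bhb^{-1}=h^{j}$ for the \emph{same} $j$.

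So for $0<2t<r$ your claimed $N_{\prescript{x}{}{K}}(R_r)=\bigl(N_{S_{2tp}}(R_{2t})\times N_{S_{(r-2t)p}}(R')\bigr)\times\bigl(S_{2m-2tp}\times S_{k-(r-2t)p}\bigr)$ is wrong. The product of the two normalisers contains the true one with index $p-1$, and it is not even contained in $N_{S_{rp}}(R_r)$. For instance, with $p=3$, $r=3$, $t=1$, the element $(2\,3)(5\,6)$ inverts $g_2=(1\,2\,3)(4\,5\,6)$ and commutes with $(7\,8\,9)$. It therefore conjugates $g_3$ to $(1\,3\,2)(4\,6\,5)(7\,8\,9)$, which is not a power of $g_3$. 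In particular, ``inducing from $N_{S_{2tp}}(R_{2t})\times N_{S_{(r-2t)p}}(R')$ up to $N_{S_{rp}}(R_r)$'' is meaningless. This is exactly why Lemma~\ref{le:Brauer properties}(3) ends with a restriction to $N_{G\times L}(T)$.

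The repair is cheap and is what the paper does: never evaluate the moved-point normaliser. Put $K'=S_{2tp}\times S_{[2tp+1,rp]}$. Anything normalising $R_r$ preserves its support, so $N_{\prescript{x}{}{K}}(R_r)=N_{K'}(R_r)\times S_{[rp+1,2m+(r-2t)p]}\times S_{[2m+(r-2t)p+1,2m+k]}$ and $N_G(R_r)=N_{S_{rp}}(R_r)\times S_{[rp+1,2m+k]}$. Induction from a direct-product subgroup commutes with outer tensor products (this is the fact you need, not transitivity of induction). Hence each summand becomes $\left(H^{(2^{tp})}(R_{2t})\boxtimes\sgn\right)\Res_{N_{K'}(R_r)}\Ind^{N_{S_{rp}}(R_r)}$ tensored with your fixed-point factor. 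The special case $m=tp$, $k=(r-2t)p$ produces the same group $N_{K'}(R_r)$ and the same module, so your identification of the first factor goes through verbatim.

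Your justification that each admissible $s$ gives exactly one double coset also needs adjusting. It follows from the bijection, noted after Proposition~\ref{pr:Mackey for Brauer}, between these double cosets and $K$-classes of $G$-conjugates of $R_r$ inside $K$. Add the fact that conjugacy in $S_{2m}\times S_{[2m+1,2m+k]}$ is detected by cycle types in each factor. The transitivity of $N_{S_{rp}}(R_r)$ on the cycles is not the relevant point. With these corrections your argument coincides with the paper's.
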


\begin{proof}
    Let $G=S_{2m+k}$ and $K=S_{2m}\times S_{[2m+1,k]}$. The $G$-conjugates $R_r^x$ of $R_r$ lying inside $K$ are, up to $K$-conjugation, characterised by the size of the intersection of $[2m]$ and the support of $R_r^x$. This size is a multiple of $p$, say $sp$ with $0\leq s\leq r$, $sp\leq 2m$ and $(r-s)p\leq k$. For such $s$, we choose $x_{s}\in S_{[sp+1,rp]\cup [2m+1,2m+(r-s)p]}\leq G$ which maps $[sp+1,rp]$ to $[2m+1,2m+(r-s)p]$, so that $R_r^{x_{s}}$ is generated by the product of $g_{s}$ and $(r-s)$-many disjoint $p$-cycles with support in $[2m+1,2m+(r-s)p]$. We write $K_s$ for $\prescript{x_s}{}{K}$. An application of Proposition~\ref{pr:Mackey for Brauer} gives
    \[
    \left( H^{(2^m)}_{\tau}\boxtimes \sgn\right)\Ind^{S_{2m+k}}(R_r) \cong \bigoplus_{\substack{0\leq s\leq r\\ sp\leq 2m \\ (r-s)p\leq k}}  \left( \prescript{x_s}{}{\left(H^{(2^m)}_{\tau} \boxtimes \sgn\right)} \right)\left(R_r\right)\Ind^{N_G\left(R_r\right)}. 
    \]
    Lemma~\ref{le:Brauer properties}(2) and (3) applied with $R=R_r^{x_s}$ and the fact that $\sgn$ is preserved by the Brauer morphism, as seen in Example~\ref{ex:sgn}, allow us to rewrite the summand labeled by $s$ as
    \[
    \left( \prescript{x_s}{}{\left(\left(H^{(2^m)}_{\tau}(R_{s}) \boxtimes \sgn\right)\Res_{N_K(R_r^{x_s})}\right)} \right)\Ind^{N_G\left(R_r\right)} = \left( \prescript{x_s}{}{\left(H^{(2^m)}_{\tau}(R_{s}) \boxtimes \sgn\right)} \right)\Res_{N_{K_s}\left(R_r\right)}\Ind^{N_G\left(R_r\right)}.
    \]
    Therefore,
    \begin{equation}\label{eq:Big one}
        \left( H^{(2^m)}_{\tau}\boxtimes \sgn\right)\Ind^{S_{2m+k}}(R_r) \cong \bigoplus_{\substack{0\leq s\leq r\\ sp\leq 2m \\ (r-s)p\leq k}}  \left( \prescript{x_s}{}{\left(H^{(2^m)}_{\tau}(R_{s}) \boxtimes \sgn\right)} \right)\Res_{N_{K_s}\left(R_r\right)}\Ind^{N_G\left(R_r\right)}. 
    \end{equation}
    By Corollary~\ref{co:FoulkesSummands}, we get non-zero summands only if $s$ is even, say $s=2t$, in which case, this summand can be written as
    \[
    \left(\prescript{x_{2t}}{}{\left(H^{(2^{tp})}(R_{2t})\boxtimes H^{(2^{m-tp})}_{\tau - 2\bm{t}} \boxtimes \sgn\right)}\right) \Res_{N_{K_{2t}}(R_r)}\Ind^{N_G(R_r)}.
    \]
    Recall that $x_{2t}\in S_{[2tp+1,rp]\cup [2m+1,2m+(r-2t)p]}\leq G$ maps $[2tp+1,rp]$ to $[2m+1,2m+(r-2t)p]$ (and vice versa). Writing $G' = S_{rp}$ and $K' = S_{2tp}\times S_{[2tp+1,rp]}$, we obtain factorisations $N_G(R_r) = N_{G'}(R_r)\times S_{[rp+1,2m+k]}$ and $N_{K_{2t}}(R_r) = N_{K'}(R_r)\times S_{[rp+1,2m+(r-2t)p]}\times S_{[2m+(r-2t)p+1,2m+k]}$. Therefore, we can split the summand as
    \begin{equation}\label{eq:summand}
    \left(H^{(2^{tp})}(R_{2t})\boxtimes \sgn\right)\Res_{N_{K'}(R_r)}\Ind^{N_{G'}(R_r)}\boxtimes \left(H^{(2^{m-tp})}_{\tau - 2\bm{t}} \boxtimes \sgn\right)\Ind^{S_{[rp+1,2m+k]}}.
    \end{equation}
    To conclude our proof, note that, if for fixed integers $r\geq 2t\geq 0$ we choose $m=tp$ and $k=(r-2t)p$ in \eqref{eq:Big one}, then the only valid choice of $s$ in the direct sum is $s=2t$, in which case $x_s$ can be chosen to be the identity. Thus \eqref{eq:Big one} simplifies to
    \[
    \left( H^{(2^{tp})}_{\tau}\boxtimes \sgn\right)\Ind^{S_{rp}}(R_r) \cong  \left(H^{(2^{tp})}_{\tau}(R_{2t}) \boxtimes \sgn\right) \Res_{N_{K'}(R_r)}\Ind^{N_{G'}(R_r)}.
    \]
    After summing over all compositions $\tau$ of length $p$, we obtain 
    \[
    \left( H^{(2^{tp})}\boxtimes \sgn\right)\Ind^{S_{rp}}(R_r) \cong  \left(H^{(2^{tp})}(R_{2t}) \boxtimes \sgn\right) \Res_{N_{K'}(R_r)}\Ind^{N_{G'}(R_r)}.
    \]
    This allows us to simplify the summand in \eqref{eq:summand} to
    \[
    \left( H^{(2^{tp})}\boxtimes \sgn\right)\Ind^{S_{rp}}(R_r)\boxtimes \left(H^{(2^{m-tp})}_{\tau - 2\bm{t}} \boxtimes \sgn\right)\Ind^{S_{[rp+1,2m+k]}},
    \]
    which completes the proof.
\end{proof}

We immediately deduce the following.

\begin{corollary}\label{co:twiseted Foulkes}
    If $r,m,k\geq 0$ are integers such that $rp\leq 2m+k$ and $\tau$ and $\rho$ are compositions of length $p$, then
    $\left(\left( H^{(2^m)}_{\tau}\boxtimes \sgn\right)\Ind^{S_{2m+k}}\right)_{\rho}(R_r)$ is isomorphic to
    \[
    \bigoplus_{\substack{0\leq t\leq r/2 \\ tp\leq m \\ (r-2t)p\leq k}} \left(H^{(2^{tp})}\boxtimes\sgn\right)\Ind^{S_{rp}}(R_r) \boxtimes \left(\left(H^{(2^{m-tp})}_{\tau - 2\bm{t}} \boxtimes\sgn\right)\Ind^{S_{2m+k-rp}}\right)_{\rho-\bm{r}}. 
    \]
\end{corollary}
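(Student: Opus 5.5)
The plan is to deduce this directly from Proposition~\ref{pr:twiseted Foulkes} by applying the block idempotent for $\rho$ to both sides and identifying how blocks interact with the Brauer morphism. First, by Lemma~\ref{le:Brauer properties}(1), the Brauer morphism commutes with direct sums. So, decomposing
\[
V:=\left( H^{(2^m)}_{\tau}\boxtimes \sgn\right)\Ind^{S_{2m+k}}=\bigoplus_{\rho'}V_{\rho'},
\]
we get $V(R_r)=\bigoplus_{\rho'}V_{\rho'}(R_r)$. We want to isolate the summand $V_\rho(R_r)$ on the right-hand side of Proposition~\ref{pr:twiseted Foulkes}.

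Second, by Theorem~\ref{th:blocks and Brauer}, each $V_{\rho'}(R_r)$ is, as a module for $FN_{S_{rp}}(R_r)\times FS_{[rp+1,2m+k]}$, a direct sum of modules $U\boxtimes W$ with $W$ in the block labelled $\rho'-\bm{r}$. Distinct $\rho'$ give distinct $\rho'-\bm{r}$. Hence $V_\rho(R_r)$ is exactly the part of $V(R_r)$ on which $S_{[rp+1,2m+k]}$ acts through the block labelled $\rho-\bm{r}$. This is obtained by multiplying by the block idempotent $e_{\rho-\bm{r}}$ of $FS_{[rp+1,2m+k]}$, which is central in $F(N_{S_{rp}}(R_r)\times S_{[rp+1,2m+k]})$.

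Third, apply this same idempotent to the right-hand side of Proposition~\ref{pr:twiseted Foulkes}. In each summand the first tensor factor is an $FN_{S_{rp}}(R_r)$-module and is untouched. The second factor $X_t:=\left(H^{(2^{m-tp})}_{\tau-2\bm{t}}\boxtimes\sgn\right)\Ind^{S_{2m+k-rp}}$ is replaced by $e_{\rho-\bm{r}}X_t=(X_t)_{\rho-\bm{r}}$, because $e(U\boxtimes W)=U\boxtimes eW$ for a central idempotent $e$ of the second factor. Since the isomorphism in Proposition~\ref{pr:twiseted Foulkes} is one of modules for $N_{S_{2m+k}}(R_r)=N_{S_{rp}}(R_r)\times S_{[rp+1,2m+k]}$, it commutes with this idempotent. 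This yields the stated formula.

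The only point needing care is the justification in the second step. We must check that Theorem~\ref{th:blocks and Brauer}, stated for a module in a single block, lets us separate the $\rho$-component cleanly. That is, we need that no component $V_{\rho'}$ with $\rho'\neq\rho$ contributes anything in block $\rho-\bm{r}$ of the second factor. This follows because the map $\rho'\mapsto\rho'-\bm{r}$ is injective. Everything else is formal bookkeeping.
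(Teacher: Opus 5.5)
Your proposal is correct and is essentially the paper's argument. The paper's proof just says to decompose $\left(H^{(2^m)}_{\tau}\boxtimes\sgn\right)\Ind^{S_{2m+k}}$ and $\left(H^{(2^{m-tp})}_{\tau-2\bm{t}}\boxtimes\sgn\right)\Ind^{S_{2m+k-rp}}$ into block summands in Proposition~\ref{pr:twiseted Foulkes} and match them using Theorem~\ref{th:blocks and Brauer}; your central block idempotent $e_{\rho-\bm{r}}$ of $FS_{[rp+1,2m+k]}$ is an explicit way of doing that matching, including the degenerate case where $\rho-\bm{r}$ is not a $p$-content, in which $e_{\rho-\bm{r}}=0$ and both sides vanish.
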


\begin{proof}
    The result follows from Theorem~\ref{th:blocks and Brauer} after decomposing the modules $\left( H^{(2^m)}_{\tau}\boxtimes \sgn\right)\Ind^{S_{2m+k}}$ and $\left(H^{(2^{m-tp})}_{\tau - 2\bm{t}} \boxtimes\sgn\right)\Ind^{S_{2m+k-rp}}$ into block summands in Proposition~\ref{pr:twiseted Foulkes}.
\end{proof}

We are now ready to prove Theorem~\ref{theorem:summandisprojective} and conclude this section. We will assume in the proof that $F$ is the finite field of order $p$, that is, the residue field of the discrete valuation ring $\mathcal{O}$ of $p$-adic integers; the result for an arbitrary field $F$ of characteristic $p$ is obtained by field extension, which preserves projective modules.

\begin{proof}
    Suppose that the statement fails to hold for a given $p$-core $\gamma$ and a composition $\theta$ of $k$ of length $p$ which contains a zero. Recall that $w=w_{\theta}(\gamma)$ and $m=(|\gamma| + pw - k)/2$ are non-negative integers. Then $\left(\left(H^{(2^m)}_{C(\gamma) + \bm{w} - \theta} \boxtimes \sgn \right)\Ind^{S_{2m+k}}\right)_{C(\gamma) + \bm{w}}$ has a non-projective indecomposable summand $V$. So, $V$ has a non-trivial $p$-subgroup of $S_{2m+k}$ as a vertex. By Cauchy's theorem, this $p$-subgroup contains a cyclic subgroup of order $p$, say, after conjugating in $S_{2m+k}$ if necessary, that it contains $R_r$ for some $1\leq r\leq (2m+k)/p$. By Theorem~\ref{th:Brauer}, we have $V(R_r)\neq 0$ and thus
    \[
    \left(\left(H^{(2^m)}_{C(\gamma) + \bm{w} - \theta} \boxtimes \sgn \right)\Ind^{S_{2m+k}}\right)_{C(\gamma) + \bm{w}}(R_r)\neq 0.
    \]
    By Corollary~\ref{co:twiseted Foulkes}, there is an integer $0\leq t\leq r/2$ with $tp\leq m$ and $(r-2t)p\leq k$ such that
    \[
    \left(\left(H^{(2^{m-tp})}_{C(\gamma) + \bm{w} - \theta - 2\bm{t}} \boxtimes\sgn\right)\Ind^{S_{2m+k-rp}}\right)_{C(\gamma) + \bm{w}-\bm{r}}\neq 0.
    \]
    Observe that, $w\geq r$, as otherwise $C(\gamma) + \bm{w}-\bm{r}$ would not be a $p$-content of a partition; indeed, such a partition $\lambda$ would have to have $p$-core $\delta$ of size less than $\gamma$, but then $C(\gamma) + \bm{w}$ would be a $p$-content of partitions with $p$-core $\gamma$ and $p$-weight $w$ and of partitions with $p$-core $\delta$ and $p$-weight equal to he $p$-weight of $\lambda$ increased by $r$, contradicting Theorem~\ref{thm: p-residues}.
    
    Lifting this non-zero $FS_{2m+k-rp}$-module to an $\mathcal{O}S_{2m+k-rp}$-module we can apply Lemma~\ref{le:FoulkesModules} with $p$-core $\gamma$, integer sequence $\theta + 2\bm{t} - \bm{r}$ with sum of entries $k - (r-2t)p\geq 0$ and a non-negative integer $w-r$ (it applies as $m-tp = (|\gamma| + p(w-r) - k + (r-2t)p)/2$ is a non-negative integer) to compute its character. It is the sum of $\chi^{\lambda}$ labeled by partitions $\lambda$ with $p$-core $\gamma$, $p$-weight $w-r$ and odd sequence $\theta + 2\bm{t} - \bm{r}$; thus, if this module is non-zero, such a partition $\lambda$ exists. Since $\theta$ contains a zero, we need $t\geq r/2$, which, together with the earlier requirements for $t$, forces $t=r/2$. But then $\lambda$ has $p$-core $\gamma$, odd sequence $\theta$ and $p$-weight strictly smaller than $w=w_{\theta}(\gamma)$, a contradiction to the definition of $w_{\theta}(\gamma)$. This completes the proof by contradiction.
\end{proof}

\section{Proof and consequences of Theorem~\ref{theorem:maintheorem}}
\label{sec:proofs}

Let $\mathcal{O}$ be a discrete valuation ring of characteristic $0$ with residue field $F$ of odd prime characteristic $p$. Recall that (working modulo isomorphisms) reduction modulo the maximal ideal of $\mathcal{O}$ is a bijection between projective $\mathcal{O}S_n$-modules and projective $FS_n$-modules. In fact, it is also a bijection between $p$-permutation $\mathcal{O}S_n$-modules and $p$-permutation $FS_n$-modules (see \cite[Lemma~5.5.2]{BensonRepresentationCohomologyI91}). One obtains the former bijection by restricting the latter bijection to projective $\mathcal{O}S_n$-modules (which all sit among the $p$-permutation $\mathcal{O}S_n$-modules). With this in mind we can use Proposition~\ref{prop:decomposition}, Theorem~\ref{theorem:summandisprojective} and \eqref{eq:projective} to complete the proof of Theorem~\ref{theorem:maintheorem} which we restate here for the reader's convenience.

\setcounter{section}{1}
\setcounter{theorem}{4}

\begin{theorem}
  Let $p$ be an odd prime. Let $\theta=(\theta_0,\theta_1,\ldots, \theta_{p-1})$ be a composition such that at least one $\theta_i=0$, and let $\gamma$ be a $p$-core partition. There is a unique maximal element in $\mathcal{E}_\theta(\gamma)$ in the dominance order, which is $p$-regular. The corresponding column of the symmetric group decomposition matrix in characteristic $p$ has a one in precisely the rows labeled by partitions in $\mathcal{E}_\theta(\gamma)$ and zeros elsewhere.
\end{theorem}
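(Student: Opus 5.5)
The plan assembles the pieces already in place. The existence of a unique maximal element $\mu$ of $\mathcal{E}_\theta(\gamma)$ is \cite[Proposition~6.14]{TurekMullineuxArxiv25}. That $\mu$ is $p$-regular follows from the final part of Proposition~\ref{pr:equivalence max}, since $\theta$ contains a zero. For the column, Proposition~\ref{prop:decomposition} reduces the claim to showing that the column labeled by $\mu$ is multiplicity-free. Multiplicity-freeness will come from exhibiting $P^{\mu}$ as a direct summand of a module whose ordinary character is multiplicity-free.

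Set $w=w_\theta(\gamma)$, $k=|\theta|$ and $m=(|\gamma|+pw-k)/2$; by Corollary~\ref{cor:FoulkesModules}, $m$ is a non-negative integer. Let
\[
V=\left(\left(H^{(2^m)}_{C(\gamma)+\bm{w}-\theta}\boxtimes\sgn\right)\Ind^{S_{2m+k}}\right)_{C(\gamma)+\bm{w}}.
\]
By Theorem~\ref{theorem:summandisprojective}, $V$ is projective over $F$. The module is a $p$-permutation module, since it is a block summand of an induced tensor product of $p$-permutation modules. Take $\mathcal{O}$ and $F$ as at the start of this section, and lift $V$ to the $\mathcal{O}S_{2m+k}$-module of the same form; it is $p$-permutation and reduces to $V$. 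The bijections between $p$-permutation modules and between projective modules recalled above then show that this lift is the projective lift of $V$. Hence its ordinary character is $\sum_{\nu}c_\nu\psi^{\nu}$ with $c_\nu\geq 0$, where $V\cong\bigoplus_\nu (P^\nu)^{c_\nu}$. By Corollary~\ref{cor:FoulkesModules}, this character equals $\sum_{\lambda\in\mathcal{E}_\theta(\gamma)}\chi^\lambda$. The one technical point is to justify that block truncation commutes with reduction modulo the maximal ideal; this holds because the blocks of $\mathcal{O}S_n$ lift those of $FS_n$, with Theorem~\ref{thm:NakayamaO} labeling them compatibly.

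Next I compare characters using \eqref{eq:projective}:
\[
\sum_{\nu}c_\nu\sum_\lambda d_{\lambda\nu}\chi^\lambda=\sum_{\lambda\in\mathcal{E}_\theta(\gamma)}\chi^\lambda.
\]
I must show that $c_\mu\geq1$. Since $\mu\in\mathcal{E}_\theta(\gamma)$, there is some $\nu$ with $c_\nu\geq 1$ and $d_{\mu\nu}\geq1$. This forces $\nu\unrhd\mu$, and also $\nu\in\mathcal{E}_\theta(\gamma)$ because $d_{\nu\nu}=1$ puts $\chi^\nu$ in the right-hand side. Maximality of $\mu$ in $\mathcal{E}_\theta(\gamma)$ then gives $\nu=\mu$, so $c_\mu\geq1$. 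All coefficients are non-negative and the right-hand side is multiplicity-free, so $d_{\lambda\mu}\leq c_\mu d_{\lambda\mu}\leq 1$ for all $\lambda$. Thus the column labeled by $\mu$ is multiplicity-free, and Proposition~\ref{prop:decomposition} gives the full statement. As a byproduct, comparing characters shows $V\cong P^\mu$.

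The main obstacle was the projectivity of $V$, which is Theorem~\ref{theorem:summandisprojective}, together with the combinatorial involution in Proposition~\ref{prop:decomposition}; both are already available. The remaining care is the lifting step: the character computed over $\mathcal{O}$ must be that of the projective lift of the $F$-module shown projective. I would handle this via the cited bijection for $p$-permutation modules \cite[Lemma~5.5.2]{BensonRepresentationCohomologyI91}.
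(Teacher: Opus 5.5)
Your proposal is correct and follows essentially the same route as the paper. You reduce via Proposition~\ref{prop:decomposition} to multiplicity-freeness, realise the block summand $V$ of the twisted Foulkes module as a projective $\mathcal{O}S_{2m+k}$-module using Theorem~\ref{theorem:summandisprojective} and the $p$-permutation lifting bijection, and use maximality of $\mu$ in $\mathcal{E}_\theta(\gamma)$ to force $P^\mu$ to be a summand, so that the multiplicity-free character from Corollary~\ref{cor:FoulkesModules} bounds the column. Your remark that block truncation commutes with reduction is a point the paper leaves implicit, and your byproduct $V\cong P^\mu$ is exactly Corollary~\ref{cor:projective indecomposable}.
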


\begin{proof}
    We have seen in Proposition~\ref{pr:equivalence max} (and preceding comments) that the first assertion is true, and in Proposition~\ref{prop:decomposition} that to prove the second assertion it suffices to show that the column of the decomposition matrix labeled by the maximal element of $\mathcal{E}_\theta(\gamma)$ is multiplicity-free. Let $\mu$ denote the maximal element of $\mathcal{E}_\theta(\gamma)$. We consider the $p$-permutation $\mathcal{O}S_{2m+k}$-module $V=\left(\left(H^{(2^m)}_{C(\gamma) + \bm{w} - \theta} \boxtimes \sgn \right)\Ind^{S_{2m+k}}\right)_{C(\gamma) + \bm{w}}$, where $k=|\theta|$, $w=w_{\theta}(\gamma)$ and $m=(|\gamma| + pw - k)/2$.
    
    Corollary~\ref{cor:FoulkesModules} shows that its ordinary character is $\sum_{\lambda\in\mathcal{E}_{\theta}(\gamma)} \chi^{\lambda}$. By Theorem~\ref{theorem:summandisprojective}, the reduction of the $p$-permutation $\mathcal{O}S_{2m+k}$-module $V$ is projective, and using the bijections discussed at the start of this section, $V$ is also projective. Thus its ordinary character is $\sum_{\nu\in M} \psi^{\nu}$, where the sum runs over some multiset $M$ of $p$-regular partitions of $2m+k$. Therefore, using \eqref{eq:projective}, we have
    \[
    \sum_{\lambda\in\mathcal{E}_{\theta}(\gamma)} \chi^{\lambda} = \sum_{\nu\in M} \psi^{\nu} = \sum_{\nu\in M}\sum_{\lambda\vdash 2m+k} d_{\lambda\nu}\chi^{\lambda}.
    \]
    It thus suffices to show that $\mu$ lies in $M$, as there are no repetitions on the left-hand side, and so any column labeled by $\nu\in M$ is multiplicity-free.

    Since $\mu\in\mathcal{E}_{\theta}(\gamma)$, there is $\nu\in M$ such that $d_{\mu\nu} = 1$ and so $\nu\unrhd\mu$. As $d_{\nu\nu}=1$, we also have $\nu\in\mathcal{E}_{\theta}(\gamma)$, but this means that $\nu=\mu$ as $\mu$ is the maximal element of $\mathcal{E}_{\theta}(\gamma)$. This concludes the proof. 
\end{proof}

\setcounter{section}{7}
\setcounter{theorem}{0}

\begin{remark}\label{re:overkill}
    Without applying Jantzen--Schaper formula (in particular, Proposition~\ref{prop:decomposition}), we can conclude from Theorem~\ref{theorem:summandisprojective} on its own, that not only is the column labeled by the maximal partition of $\mathcal{E}_{\theta}(\gamma)$ multiplicity-free, but its non-zero entries appear in some rows labeled by $\mathcal{E}_{\theta}(\gamma)$. So, in a way, Proposition~\ref{prop:decomposition} is stronger than needed, but its proof does not differ from that of a weaker sufficient version.
\end{remark}

Before continuing with twisted Foulkes modules, we briefly recall that due to the properties of adjustment matrices (see \cite[Theorem 6.35]{MathasHecke99}), the column in Theorem~\ref{theorem:maintheorem} labeled by, say, $\mu$ is a sum of columns of the decomposition matrix of a Hecke algebra of prime quantum characteristic $p$ over a field of characteristic $0$, one of which is the column labeled by $\mu$. Therefore, the column labeled by $\mu$ remains multiplicity-free when passing to this Hecke algebra. Remark~\ref{re:q characteristic} immediately describes this column.

\begin{corollary}\label{cor:Hecke}
    Theorem~\ref{theorem:maintheorem} (and Theorem~\ref{theorem:even arms}) hold for decomposition matrices of Hecke algebras of prime quantum characteristic $p$ over a field of characteristic $0$.
\end{corollary}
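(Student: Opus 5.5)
The plan is to transfer the statement from $FS_n$ to the Hecke algebra $\mathcal{H}_n$ of quantum characteristic $p$ over a field of characteristic $0$ in two steps. First, show that the relevant Hecke column is multiplicity-free. Second, rerun the Jantzen--Schaper argument of Proposition~\ref{prop:decomposition} in the Hecke setting, as Remark~\ref{re:q characteristic} indicates.

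Let $\theta$ be a composition of length $p$ containing a zero, let $\gamma$ be a $p$-core, and let $\mu$ be the maximal element of $\mathcal{E}_\theta(\gamma)$. By Proposition~\ref{pr:equivalence max}, $\mu$ is $p$-regular. Write $d^{\mathcal{H}}_{\lambda\nu}$ for the Hecke decomposition numbers and $a_{\nu\mu}$ for the adjustment matrix entries. By \cite[Theorem~6.35]{MathasHecke99} we have
\[
d_{\lambda\mu}=\sum_{\nu} d^{\mathcal{H}}_{\lambda\nu}a_{\nu\mu},
\]
where $a_{\nu\mu}$ are non-negative integers with $a_{\mu\mu}=1$, and $a_{\nu\mu}=0$ unless $\nu$ lies in the same block as $\mu$ and $\mu\unrhd\nu$. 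Hence $0\le d^{\mathcal{H}}_{\lambda\mu}\le d_{\lambda\mu}$ for every $\lambda$, because every summand is non-negative and the $\nu=\mu$ summand is exactly $d^{\mathcal{H}}_{\lambda\mu}$. By Theorem~\ref{theorem:maintheorem}, $d_{\lambda\mu}\le 1$, so the Hecke column labeled by $\mu$ is multiplicity-free.

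Next, we invoke the Hecke version of Proposition~\ref{prop:decomposition}. Its proof uses only three ingredients. The first is the Jantzen--Schaper formula in the form of Theorem~\ref{th:JS}, which \cite[Theorem~1.6]{FayersWeightThree08} states for Hecke algebras with $v_p$ replaced by a function that is positive exactly on multiples of $p$ and zero elsewhere. The second is the general facts $d^{\mathcal{H}}_{\mu\mu}=1$ and $d^{\mathcal{H}}_{\lambda\mu}>0\Rightarrow \mu\unrhd\lambda$. The third is the purely combinatorial material of Section~\ref{section:Algorithms}, which does not involve $p$ being prime.

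The downward induction on $\lambda$ then goes through verbatim. Corollary~\ref{co:JS} holds with $v_p(ap)$ replaced by the new positive weight. The involution $\omega$ on $J_\mu(\mathcal{B})$ still gives Claim~\ref{cl:js}. So the Hecke column has ones exactly in the rows indexed by $\mathcal{E}_\theta(\gamma)$ and zeros elsewhere. The equivalence with Theorem~\ref{theorem:even arms} is again Proposition~\ref{pr:equivalence max}, which is pure combinatorics.

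The main point to check is that the adjustment-matrix inequality really gives multiplicity-freeness without any circularity. It does, because the inequality only needs non-negativity of $a_{\nu\mu}$ and $a_{\mu\mu}=1$, and Theorem~\ref{theorem:maintheorem} for $FS_n$ is already proved. I would also double-check that the Hecke form of the Jantzen--Schaper formula keeps the statement ``$d=0$ iff $n=0$''. This holds because the left-hand side is a non-negative combination of decomposition numbers that includes $d_{\lambda\mu}$ itself.
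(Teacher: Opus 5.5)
Your proposal is correct and follows the paper's own argument. The adjustment matrix of \cite[Theorem~6.35]{MathasHecke99} writes the symmetric group column labeled by $\mu$ as a non-negative combination of Hecke columns, with the Hecke column labeled by $\mu$ appearing with coefficient $1$, so that column is multiplicity-free; the Hecke version of Proposition~\ref{prop:decomposition} (Remark~\ref{re:q characteristic}) then determines it. Your extra checks, namely that there is no circularity and that the Hecke Jantzen--Schaper formula keeps the ``$d=0$ iff $n=0$'' clause, are sound and make explicit what the paper leaves implicit.
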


Returning to twisted Foulkes modules, the last paragraph of the proof of Theorem~\ref{theorem:maintheorem} showed that $P^{\mu}$ is a direct summand of the projective $FS_n$-module $\left(\left(H^{(2^m)}_{C(\gamma) + \bm{w} - \theta} \boxtimes \sgn \right)\Ind^{S_{2m+k}}\right)_{C(\gamma) + \bm{w}}$. In fact, we can say more.

\begin{corollary}\label{cor:projective indecomposable}
    Let $\theta$ be a composition of length $p$ which contains a zero, and let $\gamma$ be a $p$-core partition. If $\mu$ denotes the maximal element of $\mathcal{E}_{\theta}(\gamma)$, $k=|\theta|$, $w=w_{\theta}(\gamma)$ and $m=(|\gamma| + pw - k)/2$ then over a field of characteristic $p$, we have
    \[
    P^{\mu} \cong  \left(\left(H^{(2^m)}_{C(\gamma) + \bm{w} - \theta} \boxtimes \sgn \right)\Ind^{S_{2m+k}}\right)_{C(\gamma) + \bm{w}}.
    \]
\end{corollary}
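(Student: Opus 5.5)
Let $V$ denote the module $\left(\left(H^{(2^m)}_{C(\gamma) + \bm{w} - \theta} \boxtimes \sgn \right)\Ind^{S_{2m+k}}\right)_{C(\gamma) + \bm{w}}$. The plan is to show that $V$ is projective with the same ordinary character (after lifting) as $P^{\mu}$, and then use that projective modules are determined by their characters.

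First, by Theorem~\ref{theorem:summandisprojective}, $V$ is projective over $F$. As at the start of this section, $V$ is the reduction of the $p$-permutation $\mathcal{O}S_{2m+k}$-module of the same form, and this lift is projective. By Corollary~\ref{cor:FoulkesModules}, the lift has ordinary character $\sum_{\lambda\in\mathcal{E}_{\theta}(\gamma)}\chi^{\lambda}$.

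Second, by Theorem~\ref{theorem:maintheorem} and \eqref{eq:projective}, the character $\psi^{\mu}$ of the lift of $P^{\mu}$ is also $\sum_{\lambda\in\mathcal{E}_{\theta}(\gamma)}\chi^{\lambda}$. Write the lift of $V$ as $\bigoplus_{\nu\in M}P^{\nu}$ for a multiset $M$ of $p$-regular partitions; its character is then $\sum_{\nu\in M}\psi^{\nu}$. The proof of Theorem~\ref{theorem:maintheorem} already shows that $\mu\in M$. Subtracting one copy of $\psi^{\mu}$ from both sides gives
\[
\sum_{\nu\in M\setminus\{\mu\}}\psi^{\nu}=0.
\]
Each $\psi^{\nu}$ is a nonzero character with non-negative coefficients, since $d_{\nu\nu}=1$. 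Hence $M=\{\mu\}$, and the lift of $V$ is isomorphic to $P^{\mu}$.

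Finally, reduction modulo the maximal ideal gives $V\cong P^{\mu}$ over $\mathbb{F}_p$. For an arbitrary field of characteristic $p$, extend scalars: both $V$ and $P^{\mu}$ are compatible with field extension, because $D^{\mu}$ is absolutely irreducible for symmetric groups, so $P^{\mu}$ remains indecomposable. There is no real obstacle; the one point needing care is identifying projective modules with their characters through the $\mathcal{O}$-lift. Alternatively, one can argue directly: projective $FG$-modules $U$ and $U'$ are isomorphic if and only if their lifts have equal characters, because the Cartan-type map $P^{\nu}\mapsto\psi^{\nu}$ is injective on the free abelian group of projectives, as the $\psi^{\nu}$ are linearly independent.
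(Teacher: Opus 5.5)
Your proposal is correct and follows the paper's proof: the module is projective with projective $\mathcal{O}$-lift, its ordinary character equals $\psi^{\mu}$ by \eqref{eq:projective}, Theorem~\ref{theorem:maintheorem} and Corollary~\ref{cor:FoulkesModules}, and one then reduces modulo the maximal ideal and extends scalars. You additionally spell out why equal characters force the projective lifts to be isomorphic (via the multiset $M$, or the linear independence of the $\psi^{\nu}$), a step the paper leaves implicit.
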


\begin{proof}
    Let $\mathcal{O}$ be the ring of $p$-adic integers and $F$ be its residue field of size $p$. The statement is true when we consider both sides as $\mathcal{O}S_{2m+k}$-modules, as the ordinary characters agree by \eqref{eq:projective}, Theorem~\ref{theorem:maintheorem}, and Corollary~\ref{cor:FoulkesModules}. Reduction modulo the maximal ideal of $\mathcal{O}$ then proves the result over $F$. Finally, field extension proves the result over any field of characteristic $p$.
\end{proof}

We have thus identified some indecomposable (projective) summands of the twisted Foulkes modules $H^{(2^m; k)}$. We now move towards our final goal: identifying \emph{all} indecomposable summands of the Foulkes modules $H^{(2^m)}$. As the first step, we show that Corollary~\ref{cor:projective indecomposable} already found all the projective ones via the next lemma, attributed to Littlewood \cite{LittlewoodModular51}, and stated using the version from \cite[Theorem~2.7]{RichardsDecomposition96} (with arm and leg lengths swapped).

\begin{theorem}\label{thm:hook decomposition}
    Let $\mu$ be a $p$-regular partition and $\nu$ be a partition such that $|\mu|-|\nu|$ is divisible by $p$. Then
    \[
    \sum_{(\lambda, h)} (-1)^{a(h)}d_{\lambda\mu} = 0,
    \]
    where the sum runs over all pairs of a partition $\lambda$ of the same size as $\mu$ and a rim hook $h$ of $\lambda$ such that its removal from $\lambda$ gives rise to $\nu$.
\end{theorem}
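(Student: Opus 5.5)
The plan is to derive the identity from the standard fact that the character of a projective module vanishes on $p$-singular elements, combined with the Murnaghan--Nakayama rule. Set $N=|\mu|-|\nu|$. If $N\le 0$ there are no rim hooks $h$ of a partition of size $|\mu|$ whose removal gives $\nu$, so the sum is empty and there is nothing to prove. Hence we may assume $N>0$ and $p\mid N$. Write $n=|\mu|$, and work over a $p$-modular system $(K,\mathcal{O},F)$ as in Section~\ref{sec:proofs}. Recall from \eqref{eq:projective} that the ordinary character of the projective lift of $P^{\mu}$ is $\psi^{\mu}=\sum_{\lambda\vdash n} d_{\lambda\mu}\chi^{\lambda}$.

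\textbf{Step 1 (vanishing).} Let $c$ be an $N$-cycle on $[N]$ and let $x$ be any element of $S_{[N+1,n]}\cong S_{n-N}$. The element $cx$ has order divisible by $N$, hence by $p$, so it is $p$-singular. A projective $\mathcal{O}G$-module restricted to the subgroup generated by the $p$-part of $cx$ is free. A standard consequence is that the ordinary character of a projective module vanishes on $p$-singular elements; we quote this (e.g.\ \cite{SerreRepresentation77}). Therefore $\psi^{\mu}(cx)=0$ for every such $x$.

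\textbf{Step 2 (Murnaghan--Nakayama).} For each $\lambda\vdash n$ we have
\[
\chi^{\lambda}(cx)=\sum_{h}(-1)^{\ell(h)}\chi^{\lambda\setminus h}(x),
\]
where the sum runs over the rim $N$-hooks $h$ of $\lambda$ and $\lambda\setminus h$ denotes the partition obtained by removing $h$. Substituting this into Step~1 and regrouping by the partition $\kappa=\lambda\setminus h\vdash n-N$ gives
\[
0=\sum_{\kappa\vdash n-N}\chi^{\kappa}(x)\sum_{(\lambda,h)\,:\,\lambda\setminus h=\kappa}(-1)^{\ell(h)}d_{\lambda\mu}
\]
for all $x\in S_{n-N}$. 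The irreducible characters $\chi^{\kappa}$ of $S_{n-N}$ are linearly independent, so each inner coefficient vanishes. In particular this holds for $\kappa=\nu$.

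\textbf{Step 3 (arms versus legs).} Every hook $h$ occurring in the sum has $|h|=N$, so $a(h)+\ell(h)=N-1$. Hence $(-1)^{a(h)}=(-1)^{N-1}(-1)^{\ell(h)}$, and the factor $(-1)^{N-1}$ is the same for all terms. Multiplying the identity of Step~2 by this constant sign gives the stated identity with arm lengths.

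The only step that carries real content is the vanishing of projective characters on $p$-singular elements. Since this is classical, I would simply cite it rather than reprove it. The rest is bookkeeping, in particular checking that the pairs $(\lambda,h)$ in the regrouped sum are exactly those in the statement.
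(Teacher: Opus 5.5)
Your proof is correct. It also supplies an argument the paper does not give: the paper quotes this result from Littlewood, in the form of Richards' Theorem~2.7, without proof.

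\textbf{Comparison with the paper.} The paper's remark that arm and leg lengths are swapped relative to Richards is justified exactly by your Step~3. Every hook in the sum has $|h|=N$, so $(-1)^{a(h)}=(-1)^{N-1}(-1)^{\ell(h)}$ is a constant multiple of the Murnaghan--Nakayama sign.

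\textbf{Correctness of the steps.} You evaluate $\psi^{\mu}$ at $cx$ with $c$ an $N$-cycle, and use that projective characters vanish on $p$-singular elements. You then expand with Murnaghan--Nakayama and isolate the coefficient of $\chi^{\nu}$ by linear independence of the irreducible characters of $S_{n-N}$. The bookkeeping is right. The pairs $(\lambda,h)$ in the statement automatically have $|h|=N$ and $\lambda\setminus h=\nu\vdash n-N$. The case $N\le 0$ gives an empty sum.

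\textbf{One sentence to tighten.} In Step~1, freeness of the projective module over $\langle (cx)_p\rangle$ alone only shows that the trace of $(cx)_p$ vanishes, not the trace of $cx$. The standard proof instead restricts to the cyclic group $\langle cx\rangle=\langle (cx)_p\rangle\times\langle (cx)_{p'}\rangle$, or splits the module into eigenspaces of $(cx)_{p'}$, each of which is free over $\langle (cx)_p\rangle$. Since you cite the vanishing result anyway, this is a matter of wording, not a gap.

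\textbf{What each route buys.} Quoting the literature covers the statement in the more general setting of Richards' paper, which works with Hecke algebras. Your argument uses ordinary characters and $p$-singular elements of $S_n$, so it is specific to group algebras of symmetric groups in characteristic $p$. That is the only setting where the paper uses the result, via Corollary~\ref{cor:hook decomposition} and Lemma~\ref{le:even mu}. So your self-contained proof is enough for the paper's purposes.
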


Instead of using the full version of Theorem~\ref{thm:hook decomposition}, we will only need the following consequence (stated in the language of abaci).

\begin{corollary}\label{cor:hook decomposition}
    Let $B$ be a $\beta$-set of a $p$-regular partition $\mu$ and $\mathcal{B}=(B,b-ap,b)$ be a hook triple. There is a hook triple $\mathcal{C}=(C,c-ap,c)$ such that
    \begin{enumerate}
        \item $C$ is a $\beta$-set of some partition $\lambda$ with $d_{\lambda\mu}>0$; and
        \item $c<b$; and
        \item the swap of $b-ap$ and $b$ in $B$ results in the same set as the swap of $c-ap$ and $c$ in $C$; and
        \item $\emp(\mathcal{B})$ and $\emp(\mathcal{C})$ have different parities.
    \end{enumerate}
\end{corollary}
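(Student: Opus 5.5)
We will derive this from Theorem~\ref{thm:hook decomposition} applied with $\nu$ the partition whose $\beta$-set $D$ is obtained from $B$ by swapping $b-ap$ and $b$. In other words, $\nu$ is $\mu$ with the rim $ap$-hook corresponding to $(b-ap,b)$ removed. Then $|\mu|-|\nu|=ap$ is divisible by $p$, so the theorem applies.

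The first step is to translate the sum into abacus language. Fix $r$ so that $B=B_r(\mu)$ and $D=B_r(\nu)$. Pairs $(\lambda,h)$ with $\lambda\vdash|\mu|$ and $h$ a rim hook of $\lambda$ whose removal gives $\nu$ correspond bijectively to pairs $(f,c)$ with $f\in D$, $c\notin D$, $c>f$ and $c-f=ap$. The corresponding $\lambda$ has $\beta$-set $C=(D\setminus\{f\})\cup\{c\}$, and $f=c-ap$. Thus these pairs are exactly the hook triples $\mathcal{C}=(C,c-ap,c)$ with $C$ a $\beta$-set with respect to the same $r$, for which the swap of $c-ap$ and $c$ in $C$ yields $D$. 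By Lemma~\ref{lemma:readinglegandarmlength off abacus}, $a(h)=\emp(\mathcal{C})$. The identity becomes
\[
\sum_{\mathcal{C}} (-1)^{\emp(\mathcal{C})} d_{\lambda_{\mathcal{C}}\mu}=0,
\]
where $\lambda_{\mathcal{C}}$ is the partition with $\beta$-set $C$.

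Next we split this sum according to whether $c>b$, $c=b$ or $c<b$. The term with $c=b$ is $\mathcal{B}$ itself, with $\lambda=\mu$, and contributes $(-1)^{\emp(\mathcal{B})}d_{\mu\mu}=(-1)^{\emp(\mathcal{B})}\neq 0$. Terms with $c>b$ correspond, by the fact after Example~\ref{ex:dominance}, to partitions $\lambda\rhd\mu$. For these $d_{\lambda\mu}=0$, since $d_{\lambda\mu}>0$ forces $\mu\unrhd\lambda$. Hence
\[
(-1)^{\emp(\mathcal{B})}+\sum_{\mathcal{C},\,c<b}(-1)^{\emp(\mathcal{C})}d_{\lambda_{\mathcal{C}}\mu}=0.
\]
Since the first term is nonzero, some term with $c<b$ must have $d_{\lambda\mu}>0$ and $(-1)^{\emp(\mathcal{C})}=-(-1)^{\emp(\mathcal{B})}$, because the decomposition numbers are nonnegative. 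Such a triple $\mathcal{C}$ satisfies (1)--(4).

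The main point needing care is the bookkeeping: checking that the comparison $\lambda\lhd\mu$ if and only if $c<b$ (from the remark after Example~\ref{ex:dominance}) applies here, since $[\lambda]\setminus h=[\mu]\setminus g=[\nu]$. We also need the sign convention of Theorem~\ref{thm:hook decomposition}, which uses arm lengths, to match $\emp$. Apart from that the argument is direct.
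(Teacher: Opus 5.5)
Your proposal is correct and follows the paper's proof. You apply Theorem~\ref{thm:hook decomposition} to $\mu$ and to the partition $\nu$ obtained by removing the rim hook $(b-ap,b)$, note that $\mathcal{B}$ itself contributes $(-1)^{\emp(\mathcal{B})}$, and extract a term of opposite sign with $d_{\lambda\mu}>0$. The only cosmetic difference is the order of steps: you discard the terms with $c>b$ before extracting that term, whereas the paper extracts it first and then gets $c<b$ from $\lambda\unlhd\mu$, ruling out $\lambda=\mu$ because (3) would then force $\mathcal{C}=\mathcal{B}$, contradicting (4).
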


\begin{proof}
    Without loss of generality, we assume that $B$ is the canonical $\beta$-set of $\mu$. Let $g$ be the rim $p$-divisible hook of $\mu$ corresponding to $(b-ap,b)$, and let $\nu$ be the partition obtained from $\mu$ by the removal of $g$. Since $|\mu| - |\nu| =ap$ is divisible by $p$, we can apply Theorem~\ref{thm:hook decomposition} with $\mu$ and $\nu$ to obtain a signed sum of decomposition numbers equal to $0$. The pair $(\mu, g)$ contributes by $(-1)^{a(g)} = (-1)^{\emp(\mathcal{B})}$ to the sum since $d_{\mu\mu}=1$, and thus there is a pair $(\lambda, h)$ which contributes by $(-1)^{\emp(\mathcal{B})+1}d_{\lambda\mu}\neq 0$.
    
    Writing $\mathcal{C} = (C,c-ap,c)$ for the hook triple corresponding to $h$, with $C$ the canonical $\beta$-set of $\lambda$, we immediately see that it satisfies all required properties but (2). To deduce (2), as $d_{\lambda\mu}>0$, we have $\lambda\unlhd\mu$ and we cannot have equality as then $B=C$ and, by (3), $\mathcal{B}=\mathcal{C}$, contradicting (4). So $\lambda\lhd\mu$, that it, $c<b$.
\end{proof}

Recall that an odd bead is a bead at position $b$ such that the number of vacant positions $f<b$ is odd. We refer to beads which are not odd as \textit{even beads}. So partition $\lambda$ is even if and only if all beads in a $\beta$-set of $\lambda$ are even (which is further equivalent to the odd sequence of $\lambda$ being equal to $\bm{0}$). 

\begin{lemma}\label{le:even mu}
    Let $\mu$ be an even $p$-regular partition. If all partitions $\lambda$ such that $d_{\lambda\mu}>0$ are also even partitions, then all arm lengths of the $p$-divisible hooks of $\mu$ are even.
\end{lemma}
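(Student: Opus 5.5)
We argue by contradiction, using Corollary~\ref{cor:hook decomposition}. Suppose $\mu$ is even, $p$-regular, every $\lambda$ with $d_{\lambda\mu}>0$ is even, and yet $\mu$ has a $p$-divisible hook with odd arm length. Let $B$ be the canonical $\beta$-set of $\mu$. We choose a hook triple $\mathcal{B}=(B,b-ap,b)$ with $\emp(\mathcal{B})$ odd, taking $b$ as small as possible among such triples (any choice with a well-founded extremal property will do; we fix $b$ minimal). Corollary~\ref{cor:hook decomposition} then gives a hook triple $\mathcal{C}=(C,c-ap,c)$ with $C$ a $\beta$-set of some $\lambda$ satisfying $d_{\lambda\mu}>0$, with $c<b$, with the same result after the swaps, and with $\emp(\mathcal{C})$ even. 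By hypothesis $\lambda$ is even, so every bead of $C$ is even.

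The key step is a parity computation on abaci. Let $D$ be the common set obtained after the swaps, so $D=(B\setminus\{b\})\cup\{b-ap\}=(C\setminus\{c\})\cup\{c-ap\}$. We first note that $\mathcal{B}\neq$ anything degenerate, so $c\neq b$, and we compare the parities of the beads. In $B$ all beads are even, meaning each bead has an even number of empty positions below it. Moving a bead from $b$ to $b-ap$ changes, for every bead $x\in B$ with $b-ap<x<b$, the number of empty positions below $x$ by $-1$ (since $b-ap$ becomes occupied). The new bead at $b-ap$ has $\#\{\text{empty }<b\}-\emp_B(b-ap,b-1)$ empty positions below it, which differs from the count for $b$ by $\emp_B(b-ap,b-1)$. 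Since $b-ap\notin B$, this equals $\emp(\mathcal{B})+1$, an even number. Hence in $D$ the bead at $b-ap$ is even, the beads strictly between $b-ap$ and $b$ are odd, and all other beads are even. Running the same analysis from $C$ (all beads even, $\emp(\mathcal{C})$ even), in $D$ the bead at $c-ap$ is odd, the beads strictly between $c-ap$ and $c$ are odd, and the rest are even.

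Now we compare the two descriptions of $D$. The bead at $c-ap\in D$ is odd, so by the first description $c-ap$ lies strictly between $b-ap$ and $b$. The set of odd beads of $D$ must agree in both descriptions: it is $D\cap(b-ap,b)$ and also $(D\cap(c-ap,c))\cup\{c-ap\}=D\cap[c-ap,c)$. Since $c<b$ and $c-ap>b-ap$, we have $c\notin D$ only if $c$ was the moved bead, so $c\notin D$ unless $c=b-ap$, which is impossible because $c-ap>b-ap$. Then the beads of $D$ in $[c,b)$ must be absent, and the beads of $D$ in $(b-ap,c-ap)$ must be absent. Consequently the beads of $B$ in the interval $(b-ap,b)$ all lie in $[c-ap,c)$, and the positions in $(b-ap,c-ap)$ and in $[c,b)$ are empty in $B$, with $c\notin B$ as well. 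From this we aim to extract a hook triple of $B$ with odd arm length and smaller top bead, for instance one ending at a bead of $B$ in $[c-ap,c)$, contradicting the minimality of $b$. Alternatively, we would show directly that some bead of $D$ has an inconsistent parity. We expect this last bookkeeping step to be the main obstacle. If it does not close, our fallback is to pick $b$ extremal differently (say with $a$ minimal first) and redo the comparison, using also that $\mu$ is $p$-regular to guarantee that $[c,b)$ cannot be a long run of empty positions aligned on all runners.
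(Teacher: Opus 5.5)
Your setup and both parity computations are correct. Put $D=(B\setminus\{b\})\cup\{b-ap\}=(C\setminus\{c\})\cup\{c-ap\}$. Since $\mu$ is even and $\emp(\mathcal{B})$ is odd, the odd beads of $D$ are exactly $D\cap(b-ap,b)$. Since $\lambda$ is even and $\emp(\mathcal{C})$ is even, they are also exactly $D\cap[c-ap,c)$.

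The gap is in the comparison step. You correctly deduce that the odd bead $c-ap$ would have to lie in $(b-ap,b)$, but then write ``since $c<b$ and $c-ap>b-ap$''. The triples $\mathcal{B}$ and $\mathcal{C}$ have the same $a$, so $c<b$ forces $c-ap<b-ap$. The configuration you go on to analyse therefore cannot occur. This includes the plan to extract a smaller odd-arm hook triple contradicting minimality of $b$, and the fallback via $p$-regularity. As written, your argument never reaches a contradiction.

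The missing observation closes the proof at once. The bead $c-ap\in D$ lies strictly below $b-ap$, so your first description says it is even, while your second says it is odd. No extremal choice of $b$ is needed: any hook triple with odd arm length works. Also, $p$-regularity is used only to invoke Corollary~\ref{cor:hook decomposition}.

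Repaired this way, your argument is essentially the paper's. There one notes that $c-ap$ is a bead of $B$ lying below both $b-ap$ and $b$, hence even in $B$ and in $B'=D$. Then $\emp(\mathcal{C})$ even forces $c$ to be an odd bead of $C$. So $\lambda$ is not even although $d_{\lambda\mu}>0$, which proves the contrapositive.
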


\begin{proof}
    We show the contrapositive. An example of the hook triples and $\beta$-sets used in the argument is in Figure~\ref{fig:contrapositive}. Let $B$ be the canonical $\beta$-set of $\mu$ and suppose that there is a hook triple $\mathcal{B} = (B,b-ap,b)$ with $\emp(\mathcal{B})$ odd. Let $\mathcal{C} = (C,c-ap,c)$ be the hook triple given by Corollary~\ref{cor:hook decomposition}. We claim that $c$ is an odd bead of $C$; this finishes the proof as then $C$ is a $\beta$-set of partition $\lambda$ which is not even but $d_{\lambda\mu}>0$.
    
    Let $B'$ be the $\beta$ set obtained from $C$ by swapping $c-ap$ and $c$ (and from $B$ by swapping $b-ap$ and $b$). Since $\emp(\mathcal{B})$ is odd, $\emp(\mathcal{C})$ is even, and thus $c$ is an odd bead of $C$ if and only if $c-ap$ is an even bead of $B'$. Since $b>b-ap>c-ap$ and all beads of $B$ are even, $c-ap$ is an even bead of both $B$ and $B'$, and so $c$ is an odd bead of $C$.
\end{proof}

\begin{figure}[ht]
    \[
    \abacus(lmr,bbb,bbb,nnb,nnb,onn) \quad \abacus(lmr,bbb,bob,bnb,nnb,nnn) \quad \abacus(lmr,bbb,bnb,bnb,nob,nnn)
    \]
    \[
    \mathcal{B} \hspace{1.3cm} B' \hspace{1.3cm} \mathcal{C}
    \]
    \caption{The hook triple $\mathcal{B}$, $\beta$-set $B'$ and hook triple $\mathcal{C}$ from the proof of Lemma~\ref{le:even mu} with $\mu=(4^2,2)$. The hook triples are drawn as the underlying $\beta$-set displayed on the abacus, with the bead position from the triple highlighted, and with the empty position from the triple \emph{two} places above the bead. Using the notation from the proof, we have $\emp(\mathcal{B}) = 3$, $\emp(\mathcal{C})=2$ and $c-ap$ (highlighted in $B'$) is an even bead in both $B$ and $B'$ as there are no empty lower-numbered positions.}
    \label{fig:contrapositive}
\end{figure}

We are now ready to show that Corollary~\ref{cor:projective indecomposable} finds all projective indecomposable summands of the Foulkes module $H^{(2^m)}$. We will not go into the details, but the argument we use provides an alternative way of establishing Theorem~\ref{theorem:maintheorem} for $\theta = \bm{0}$ without using the Jantzen--Schaper formula. Indeed, omitting the second and last sentence of the upcoming proof, one shows that any projective indecomposable summand of $H^{(2^m)}$ must be of the form $P^{\mu}$ where $\mu$ is the maximal element of some set $\mathcal{E}_{\theta}(\gamma)$, and then it suffices to combine this observation with Theorem~\ref{theorem:summandisprojective} and Corollary~\ref{cor:FoulkesModules} to conclude Corollary~\ref{cor:projective indecomposable} and Theorem~\ref{theorem:maintheorem}.

\begin{proposition}\label{pr:projective summands}
    Let $m\geq 0$ be an integer. Let $V$ be an indecomposable summand of the Foulkes module $H^{(2^m)}$ in characteristic $p$. Then $V$ is projective if and only if $V = H^{(2^m)}_{C(\gamma) + \bm{w}}$ for some $p$-core partition $\gamma$ and $w=w_{\bm{0}}(\gamma)$ such that $2m = |\gamma| + pw$.
\end{proposition}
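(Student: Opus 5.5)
The plan is to prove both directions separately, using Corollary~\ref{cor:projective indecomposable} for the \emph{if} direction and Lemma~\ref{le:even mu} together with Proposition~\ref{pr:equivalence max} for the \emph{only if} direction.

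\emph{If direction.} Suppose $V = H^{(2^m)}_{C(\gamma)+\bm{w}}$ with $w = w_{\bm{0}}(\gamma)$ and $2m = |\gamma|+pw$. Take $\theta = \bm{0}$, which certainly contains a zero, and $k = 0$. Then the twisted Foulkes module is just $H^{(2^m)}$, and both block projections coincide. Corollary~\ref{cor:projective indecomposable} gives $H^{(2^m)}_{C(\gamma)+\bm{w}} \cong P^{\mu}$, where $\mu$ is the maximal element of $\mathcal{E}_{\bm{0}}(\gamma)$. So this block component is a projective indecomposable module, and in particular it is the indecomposable summand $V$.

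\emph{Only if direction.} Let $V$ be a projective indecomposable summand of $H^{(2^m)}$, so $V \cong P^{\mu}$ for some $p$-regular $\mu \vdash 2m$. Lift $H^{(2^m)}$ to $\mathcal{O}$ as a $p$-permutation module; it is a permutation module. Then $P^{\mu}$ lifts to a summand of the lift, and the ordinary character of the lift is $\sum \chi^{\lambda}$ over even $\lambda \vdash 2m$, which is multiplicity-free. By \eqref{eq:projective}, $\psi^{\mu} = \sum_\lambda d_{\lambda\mu}\chi^{\lambda}$ is bounded by this character. Hence every $\lambda$ with $d_{\lambda\mu} > 0$ is even, and in particular $\mu$ itself is even since $d_{\mu\mu} = 1$.

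Lemma~\ref{le:even mu} then shows that all arm lengths of the $p$-divisible hooks of $\mu$ are even. By Proposition~\ref{pr:equivalence max}, $\mu$ is the maximal element of $\mathcal{E}_{\theta}(\gamma)$, where $\gamma$ is the $p$-core of $\mu$ and $\theta = O(\mu) = \bm{0}$. So $\mu \in \mathcal{E}_{\bm{0}}(\gamma)$, its $p$-weight is $w = w_{\bm{0}}(\gamma)$, and $2m = |\gamma| + pw$.

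Applying the if direction to this $\gamma$, the block component $H^{(2^m)}_{C(\gamma)+\bm{w}}$ is indecomposable and isomorphic to $P^{\mu}$. Since $V$ lies in the block $C(\mu) = C(\gamma)+\bm{w}$ and is a nonzero summand of that indecomposable block component, we get $V = H^{(2^m)}_{C(\gamma)+\bm{w}}$.

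The main delicate point is the lifting step. We need the projective summand $P^{\mu}$ of $H^{(2^m)}$ to correspond to a summand of the $\mathcal{O}$-lift whose character is $\psi^{\mu}$, so that the character comparison is legitimate. This follows from the bijection between $p$-permutation $\mathcal{O}S_n$- and $FS_n$-modules recalled at the start of this section, which is compatible with direct sums, together with the fact that the projective lift of $P^{\mu}$ is unique.
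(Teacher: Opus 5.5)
Your proposal is correct and follows essentially the paper's argument. You use Corollary~\ref{cor:projective indecomposable} with $\theta=\bm{0}$ for the `if' direction; for the converse you lift to $\mathcal{O}$ and compare characters to force every $\lambda$ with $d_{\lambda\mu}>0$ to be even, then use Lemma~\ref{le:even mu} and Proposition~\ref{pr:equivalence max} to make $\mu$ the maximal element of $\mathcal{E}_{\bm{0}}(\gamma)$, and finally the indecomposability of the block component to get $V = H^{(2^m)}_{C(\gamma)+\bm{w}}$. The only cosmetic difference is that you compare against the character of all of $H^{(2^m)}$ rather than first projecting to the block via Lemma~\ref{le:FoulkesModules}, which changes nothing.
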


\begin{proof}
    As usual, we can work over the field $F$ with $p$ elements and take $\mathcal{O}$ to be the ring of $p$-adic integers. The `if' part is Corollary~\ref{cor:projective indecomposable} applied with $\theta = \bm{0}$. For the converse, suppose that $V\cong P^{\mu}$ for some $p$-regular partition $\mu$. Let $\gamma$ be its $p$-core and $w$ its $p$-weight (so $2m = |\gamma| + pw$). As discussed at the start of the section, we can lift $V$ to a projective $\mathcal{O}S_{2m}$-module, so that $V$ is a summand of the $\mathcal{O}S_{2m}$-module $H^{(2^m)}_{C(\gamma) + \bm{w}}$. Passing to ordinary characters, by Lemma~\ref{le:FoulkesModules} (applied with $\theta = \bm{0}$), we have
    \[
    \psi_{\mu} = \sum_{\lambda} \chi^{\lambda},
    \]
    where the sum is taken over \emph{some} even partitions $\lambda$ with $p$-core $\gamma$ and $p$-weight $w$. By \eqref{eq:projective}, $d_{\lambda\mu}=1$ for partitions $\lambda$ in the sum, and $d_{\lambda\mu}=0$ otherwise. In particular, $\mu$ is even, and Lemma~\ref{le:even mu} shows that all arm lengths of $p$-divisible hooks of $\mu$ are even. Using Proposition~\ref{pr:equivalence max}, $\mu$ is the maximal element of $\mathcal{E}_{\bm{0}}(\gamma)$, so $w=w_{\bm{0}}(w)$. Corollary~\ref{cor:projective indecomposable} then shows that $H^{(2^m)}_{C(\gamma) + \bm{w}}$ is indecomposable; thus it equals $V$, as required.
\end{proof}

To extend Proposition~\ref{pr:projective summands} to a description of all indecomposable summands of the Foulkes module, we use \cite[Theorem~1.2]{GiannelliWildonFoulkesandDecomposition15}. We state it here in a slightly modified version: the main part follows from letting $k=0$ in the original version (which forces $r=2t$), while the `moreover' part is implicit from the proof of \cite[Theorem~1.2]{GiannelliWildonFoulkesandDecomposition15}.

\begin{theorem}\label{th:Foulkes vertices}
    Let $m\geq 0$ be an integer. If $V$ is an indecomposable non-projective summand of $H^{(2^m)}$ over a field of characteristic $p$, then $V$ has a vertex given by a Sylow $p$-subgroup $Q$ of $S_2\wr S_{tp}$ for some $t\geq 1$ such that $tp\leq m$. Moreover, $V(R_{2t}) \cong H^{(2^{tp})}(R_{2t})\boxtimes W$ as modules of $FN_{S_{2m}}(R_{2t})\cong FN_{S_{2tp}}(R_{2t})\times FS_{[2tp+1,2m]}$, where $W$ is a projective indecomposable summand of $H^{(2^{m-tp})}$.
\end{theorem}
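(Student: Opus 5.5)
The plan is to follow the proof of \cite[Theorem~1.2]{GiannelliWildonFoulkesandDecomposition15}, organised around the Brauer morphism as in Section~\ref{sec:Brauer}. Set $G=S_{2m}$ and $K=S_2\wr S_m$, so that $H^{(2^m)}=F\Ind_K^G$ is a permutation module. Since $V$ is a summand of a module induced from $K$, a vertex $Q$ of $V$ is, after conjugation, a $p$-subgroup of $K$. Because $V$ is not projective, $Q\neq 1$.

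\emph{First step: the shape of $Q$.} I would use Broué's correspondence \cite{BroueP-perm85}, which refines Theorem~\ref{th:Brauer}. It states that $V\mapsto V(Q)$ is a bijection between indecomposable $p$-permutation $FG$-modules with vertex $Q$ and projective indecomposable $F[N_G(Q)/Q]$-modules. By Proposition~\ref{pr:Mackey for Brauer},
\[
H^{(2^m)}(Q)\cong\bigoplus_{x,\;Q\leq {}^xK}\mathbf{1}\Ind_{N_{{}^xK}(Q)}^{N_G(Q)}.
\]
So $V(Q)$ is a projective summand of a sum of permutation modules on cosets of the groups $N_{{}^xK}(Q)$. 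A transitive permutation module for $N_G(Q)/Q$ has a projective summand only when $Q$ is a Sylow $p$-subgroup of the point stabiliser $N_{{}^xK}(Q)$. Write $\Sigma\subseteq[2m]$ for the support of $Q$, which is a union of paired orbits inside ${}^xK$. I would then argue that $Q$ is Sylow in $N_{{}^xK}(Q)$ only if $Q$ is a full Sylow $p$-subgroup of the wreath product $S_2\wr S_{|\Sigma|/2}$ acting on $\Sigma$. The reason is that otherwise $N_P(Q)>Q$ for a Sylow subgroup $P\geq Q$ of that wreath product, since normalisers grow in $p$-groups.

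Every orbit of $Q$ on $\Sigma$ has size divisible by $p$ and the orbits come in pairs, so $|\Sigma|=2tp$ for some $t\geq1$ with $tp\leq m$. Conjugating inside $G$, I may assume $\Sigma=[2tp]$ and that $Q$ contains $R_{2t}$ in the form used in Example~\ref{ex:Foulkes}. The element $g_{2t}^y$ there is a product of paired $p$-cycles that moves every point of $[2tp]$, and any Sylow subgroup of $S_2\wr S_{tp}$ contains a conjugate of it.

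\emph{Second step: the ``moreover'' part.} By Corollary~\ref{co:FoulkesSummands}, which rests on \eqref{eq:Foulkes strong}, we have $H^{(2^m)}(R_{2t})\cong H^{(2^{tp})}(R_{2t})\boxtimes H^{(2^{m-tp})}$, and $V(R_{2t})$ is a direct summand of this by Lemma~\ref{le:Brauer properties}(1). I would decompose $V(R_{2t})$ into indecomposable $p$-permutation summands $U\boxtimes W$. Applying the Brauer morphism at $Q=Q_1\times 1$ with Lemma~\ref{le:Brauer properties}(3) and (4), and using that $Q$ has support $[2tp]$, gives the following. Any factor $W$ occurring in a summand that survives at $Q$, which it must do by the transitivity of Brauer quotients, $V(Q)=(V(R_{2t}))(Q)$, has $W(S)=0$ for every nontrivial $p$-subgroup $S\leq S_{[2tp+1,2m]}$. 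Otherwise the vertex of $V$ would be strictly larger than $Q$, since $Q\times S$ would lie in a vertex. So $W$ is projective by Theorem~\ref{th:Brauer}.

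The remaining task is to show that the whole $V(R_{2t})$ has the form $H^{(2^{tp})}(R_{2t})\boxtimes W$ with $W$ indecomposable. For this I would show that $V(R_{2t})$, restricted to the first factor, is a multiple of $H^{(2^{tp})}(R_{2t})$. Then I would use indecomposability of $V$, via Green correspondence with respect to $N_G(R_{2t})$, to force exactly one indecomposable summand $W$ in the second factor.

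\emph{Expected obstacle.} I expect the main difficulty to be the final identification of $V(R_{2t})$ with a single tensor $H^{(2^{tp})}(R_{2t})\boxtimes W$. Nothing stated so far gives this directly, and the first factor $H^{(2^{tp})}(R_{2t})$ need not be indecomposable as an $FN_{S_{2tp}}(R_{2t})$-module. To get around this, I would first try to show that $H^{(2^{tp})}(R_{2t})$ is indecomposable with vertex the Sylow subgroup $Q_1$ of $S_2\wr S_{tp}$. I would do this by identifying it with $\mathbf{1}\Ind^{N_{S_{2tp}}(R_{2t})}$ from \eqref{eq:Foulkes weak} and checking that it is a transitive permutation module on cosets of a group containing a Sylow subgroup. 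If that fails, I would instead read the statement off from the explicit construction in the proof of \cite[Theorem~1.2]{GiannelliWildonFoulkesandDecomposition15}, as the paper indicates.
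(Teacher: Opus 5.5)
\paragraph{The vertex step rests on a false lemma.} The paper does not prove this statement itself. It takes the main part from \cite[Theorem~1.2]{GiannelliWildonFoulkesandDecomposition15} with $k=0$, and reads the ``moreover'' part off that proof. Your reconstruction has a genuine gap in the first step. The facts that $\Sigma$ is a union of pairs, that $|\Sigma|=2tp$, and that a conjugate of $R_{2t}$ lies in $Q$ are all fine. The problem is your key lemma: that a transitive permutation module $F[L/H]$, with $L=N_G(Q)/Q$ and $H=N_{{}^xK}(Q)/Q$, has a projective summand only if $p\nmid |H|$. This is false in general: $F[S_3/S_2]=\mathbf{1}\oplus D^{(2,1)}$ in characteristic $2$.

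It also fails in exactly your situation. Take $p=3$:
\begin{itemize}
\item $H^{(2^3)}$ has the projective summand $S^{(4,2)}$, since $(4,2)$ is a $3$-core.
\item By \eqref{eq:Foulkes strong}, $H^{(2^6)}(R_2)\cong H^{(2^3)}(R_2)\boxtimes H^{(2^3)}$.
\item The first factor is the permutation module of $N_{S_6}(R_2)/R_2$ (order $12$) on the cosets of a subgroup of order $4$, so it is projective.
\item Hence $H^{(2^3)}(R_2)\boxtimes S^{(4,2)}$ is a projective summand, and by Brou\'e's correspondence $H^{(2^6)}$ has an indecomposable summand with vertex $R_2$.
\end{itemize}
Yet $N_{{}^yK}(R_2)$ contains $S_2\wr S_{[4,6]}$, whose order is divisible by $3$. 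In fact your criterion would force $m-tp<p$ for every non-projective summand.

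Splitting off the fixed points via $N_G(Q)=N_{S_\Sigma}(Q)\times S_{[2m]\setminus\Sigma}$ does not rescue the argument. The fixed-point factor is again a Foulkes module, which can have projective summands. On $\Sigma$ you would still be applying the same false lemma. The $p$-group $N_P(Q)/Q$ you get from ``normalisers grow'' is not normal in $N_{S_\Sigma}(Q)/Q$. So it need not act trivially on the permutation module, and its existence does not contradict projectivity. An argument specific to these normalisers and to Foulkes modules is needed, and this is what the cited proof supplies.

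\paragraph{The ``moreover'' part is not established.} You acknowledge this. Your Brauer-quotient argument shows that $W$ is projective only for summands $U\boxtimes W$ of $V(R_{2t})$ with $U(Q)\neq 0$. It says nothing about the remaining summands. It does not show that the first factor is all of $H^{(2^{tp})}(R_{2t})$. It does not show that a single indecomposable $W$ occurs.

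Indecomposability of $V$ does not give these. Green correspondence controls one summand with vertex $Q$ of a restriction of $V$, not the whole Brauer quotient $V(R_{2t})$, which can be decomposable.

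Your proposed test for indecomposability of $H^{(2^{tp})}(R_{2t})$ is backwards. If a proper point stabiliser contains a Sylow $p$-subgroup, the index is prime to $p$ and the trivial module splits off. What gives indecomposability is a Sylow $p$-subgroup of $N_{S_{2tp}}(R_{2t})$ acting transitively on the cosets. For $t=1$, for example, the stabiliser has order $2p(p-1)$ and index $p$, and $C_p\times C_p$ acts transitively.

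This part cannot be left open, because the proof of Theorem~\ref{thm:indecomposables} relies exactly on the identification $V(R_{2t})\cong H^{(2^m)}_\tau(R_{2t})$. Your fallback of reading it off from \cite{GiannelliWildonFoulkesandDecomposition15} is what the paper does. If you take that route, cite the vertex part as well rather than deriving it from the false lemma.
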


\begin{remark}\label{re:non-zero}
    The module $V(R_{2t})$ in the statement is non-zero, since $R_{2t}$ is contained in a vertex of $V$ (a suitable conjugate of $Q$) and so Theorem~\ref{th:Brauer} applies.
\end{remark}

The proof of our final main result, restated here for the reader's convenience, follows.

\setcounter{section}{1}
\setcounter{theorem}{5}

\begin{theorem}
    Let $p$ be an odd prime and $m\geq 0$ an integer. Let $\tau$ be the $p$-content of an even partition of $2m$. Then over a filed of characteristic $p$, $H^{(2^m)}_{\tau}$ is indecomposable. That is,
    \[
    H^{(2^m)} = \bigoplus_{\tau} H^{(2^m)}_{\tau},
    \]
    is the decomposition of the Foulkes module $H^{(2^m)}$ into indecomposable summands (where the sum runs over all distinct $p$-contents $\tau$ of even partitions of $2m$).
\end{theorem}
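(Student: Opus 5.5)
We show that $H^{(2^m)}_{\tau}$ cannot split into two or more non-zero summands. The idea is to compare the Brauer quotient $(\cdot)(R_{2t})$ of a non-projective summand with that of the whole block component, using Theorem~\ref{th:Foulkes vertices}, Corollary~\ref{co:FoulkesSummands} and Proposition~\ref{pr:projective summands}. Fix $\tau = C(\gamma)+\bm{w}$, the $p$-content of an even partition of $2m$, so that $2m=|\gamma|+pw$. The module $H^{(2^m)}_{\tau}$ is non-zero, since its ordinary character contains $\chi^{\lambda}$ for that even partition $\lambda$ (Lemma~\ref{le:FoulkesModules} with $\theta=\bm{0}$). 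Write $H^{(2^m)}_{\tau}=V_1\oplus\dots\oplus V_s$ with each $V_i$ indecomposable, and suppose for contradiction that $s\geq 2$.

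\emph{Step 1: there are no projective summands.} If some $V_i$ were projective, Proposition~\ref{pr:projective summands} would give $V_i=H^{(2^m)}_{C(\gamma')+\bm{w'}}$ for some block. This block must be $\tau$, so $V_i=H^{(2^m)}_{\tau}$ and $s=1$, a contradiction. Hence every $V_i$ is non-projective.

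\emph{Step 2: each $V_i$ determines $t$, and $t$ depends only on $\tau$.} By Theorem~\ref{th:Foulkes vertices}, each $V_i$ has an associated $t_i\geq 1$ with $t_ip\leq m$ and
\[
V_i(R_{2t_i})\cong H^{(2^{t_ip})}(R_{2t_i})\boxtimes W_i,
\]
where $W_i$ is a projective indecomposable summand of $H^{(2^{m-t_ip})}$. This module is non-zero by Remark~\ref{re:non-zero}. Since $V_i$ lies in block $\tau$, Theorem~\ref{th:blocks and Brauer} (with $r=2t_i$) shows that $W_i$ lies in block $\tau-2\bm{t_i}=C(\gamma)+\bm{(w-2t_i)}$. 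Proposition~\ref{pr:projective summands} then forces
\[
W_i=H^{(2^{m-t_ip})}_{\tau-2\bm{t_i}} \quad\text{and}\quad w-2t_i=w_{\bm{0}}(\gamma).
\]
Therefore $t_i=(w-w_{\bm{0}}(\gamma))/2=:t$ is the same for all $i$.

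\emph{Step 3: one summand exhausts the Brauer quotient.} By Corollary~\ref{co:FoulkesSummands},
\[
H^{(2^m)}_{\tau}(R_{2t})\cong H^{(2^{tp})}(R_{2t})\boxtimes H^{(2^{m-tp})}_{\tau-2\bm t}.
\]
By Step~2 this is isomorphic to $V_1(R_{2t})$. On the other hand, Lemma~\ref{le:Brauer properties}(1) gives $H^{(2^m)}_{\tau}(R_{2t})=\bigoplus_i V_i(R_{2t})$. Comparing dimensions, $V_i(R_{2t})=0$ for all $i\geq 2$. This contradicts the non-vanishing of $V_2(R_{2t})$ established in Step~2. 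Hence $s=1$, and the displayed block decomposition is the decomposition into indecomposables.

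The delicate point is Step~2. One must check that the block of $W_i$ is exactly $\tau-2\bm{t_i}$ via Theorem~\ref{th:blocks and Brauer}, and that Proposition~\ref{pr:projective summands} pins down both $W_i$ and the equality $w-2t_i=w_{\bm{0}}(\gamma)$. Once this is in place, $t$ depends only on the block, and the dimension count in Step~3 is immediate.
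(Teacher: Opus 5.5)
Your proposal is correct and follows the paper's own proof essentially step by step. You rule out projective summands via Proposition~\ref{pr:projective summands}, use Theorem~\ref{th:Foulkes vertices} with Theorem~\ref{th:blocks and Brauer} and Proposition~\ref{pr:projective summands} to show that $t$ and $W$ depend only on $\tau$, and then compare with Corollary~\ref{co:FoulkesSummands} and additivity of the Brauer quotient to leave room for only one non-zero summand. The one step to state explicitly is that $w-2t_i=w_{\bm{0}}(\gamma)$ relies on Theorem~\ref{thm: p-residues}, which identifies the $p$-core supplied by Proposition~\ref{pr:projective summands} with $\gamma$, just as the paper does.
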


\begin{proof}
    Let $V$ be a indecomposable summand of $H^{(2^m)}_{\tau}$. If $V$ is projective, then $V = H^{(2^m)}_{\tau}$ by Proposition~\ref{pr:projective summands} and we are done. So suppose that all indecomposable summands of $H^{(2^m)}_{\tau}$ are non-projective. Let $t$ be the positive integer from Theorem~\ref{th:Foulkes vertices} applied to $V$. The `moreover' part of the same statement and Theorem~\ref{th:blocks and Brauer} shows that $V(R_{2t}) \cong H^{(2^{tp})}(R_{2t})\boxtimes W$, where $W$ is a projective indecomposable summand of $H^{(2^{m-tp})}_{\tau - 2\bm{t}}$. By Proposition~\ref{pr:projective summands}, we have $W= H^{(2^{m-tp})}_{\tau - 2\bm{t}}$ and $\tau - 2\bm{t}= C(\gamma) + \bm{w}$ for some $p$-core partition $\gamma$ and $w=w_{\bm{0}}(\gamma)$. Utilizing Theorem~\ref{thm: p-residues}, $\gamma$ is the $p$-core of partitions with $p$-content $\tau$ and so $t$ and $W$ depend only on $\tau$, not on $V$.

    That is, if $\gamma$ is the $p$-core of any partition with $p$-content $\tau$ and $t = (2m - |\gamma| - pw_{\bm{0}}(\gamma))/2p$, any indecomposable summand $V$ of $H^{(2^m)}_{\tau}$ has as a vertex a Sylow $p$-subgroup of $S_2\wr S_{tp}$ and $V(R_{2t}) \cong H^{(2^{tp})}(R_{2t})\boxtimes H^{(2^{m-tp})}_{\tau - 2\bm{t}}$ (which is non-zero; see Remark~\ref{re:non-zero}). By Corollary~\ref{co:FoulkesSummands}, we can rewrite this as $V(R_{2t}) \cong H^{(2^m)}_{\tau}(R_{2t})$. As the right-hand side is the direct sum of $U(R_{2t})$ as $U$ goes through all the indecomposable summands of $H^{(2^m)}_{\tau}$ (including $V$), and all $U(R_{2t})$ are non-zero, $H^{(2^m)}_{\tau}$ is either zero or indecomposable. As there is a partition of $2m$ with $p$-content $\tau$, it is the latter, finishing the proof.
\end{proof}

\setcounter{section}{7}
\setcounter{theorem}{9}

\begin{example}\label{ex:indecomposable}
    Let $p=5$ and $m=7$. There are $6$ different $p$-contents of even partitions of $14$:
    \begin{alignat*}{3}
\tau_1 &= (3,3,3,3,2) &\qquad
\tau_2 &= (4,3,2,2,3) &\qquad
\tau_3 &= (3,3,2,3,3) \\[6pt]
\tau_4 &= (3,2,2,3,4) &\qquad
\tau_5 &= (3,2,3,3,3) &\qquad
\tau_6 &= (2,3,4,3,2).
\end{alignat*}
Thus $H^{(2^7)}$ has $6$ indecomposable summands, namely,
\[
H^{(2^7)} = \bigoplus_{i=1}^6 H^{(2^7)}_{\tau_i}.
\]
\end{example}

There is a natural guess for a generalization of this decomposition to twisted Foulkes modules $H^{(2^m; k)}$. We state it here as a conjecture.

\begin{conjecture}\label{con:indecomposables}
    Let $m,k\geq 0$ be integers. Let $\tau$ and $\theta$ be the $p$-content and the odd sequence of a partition of $2m+k$, respectively, with $k=|\theta|$ . Then $\left(\left( H^{(2^m)}_{\tau-\theta} \boxtimes \sgn\right)\Ind^{S_{2m+k}}\right)_{\tau}$ is indecomposable. That is,
    \[
    H^{(2^m; k)} = \bigoplus_{(\tau, \theta)} \left(\left( H^{(2^m)}_{\tau-\theta} \boxtimes \sgn\right)\Ind^{S_{2m+k}}\right)_{\tau},
    \]
    is the decomposition of the twisted Foulkes module $H^{(2^m; k)}$ into indecomposable summands (where the sum runs over all pairs of $\tau$ and $\theta$ as above).
\end{conjecture}

Many of the ingredients of the proof of Theorem~\ref{thm:indecomposables} generalize to twisted Foulkes modules $H^{(2^m; k)}$, for instance, the result of Giannelli--Wildon stated here as Theorem~\ref{th:Foulkes vertices} and our Corollary~\ref{cor:projective indecomposable}. However, the combinatorics of partitions with non-zero odd sequences prevents us from straightforwardly applying Littlewood's formula in Theorem~\ref{thm:hook decomposition} as done for partitions with zero odd sequence in Lemma~\ref{le:even mu}. In turn, while we have found some projective indecomposable summands of $H^{(2^m; k)}$, we are unable to prove that we have found all of them. Note that a generalization of Lemma~\ref{le:even mu} to non-zero odd sequences would not only likely prove Conjecture~\ref{con:indecomposables}, but it could also provide an alternative proof of Theorem~\ref{theorem:maintheorem} without using the Jantzen--Schaper formula (see the paragraph before Proposition~\ref{pr:projective summands}).

\section*{Acknowledgment}
The authors would like to thank Mark Wildon for providing computational data at the early stage of the project and for a discussion about Theorem~\ref{thm:indecomposables}, Gunter Malle and Liron Speyer for their questions, which led to the enumeration of the newly found columns in a given block, Stacey Law and Lorenzo Putignano for their valuable suggestions on improving the manuscript and Kai Meng Tan for suggesting a further entry for the literature background. The second author is grateful to Bim Gustavsson and Stacey Law for many in-depth conversations while working on a connected project.

The second author was supported by the LMS Early Career Fellowship ECF-2025-26 at the University of Birmingham and is currently funded by the FY2025 JSPS Postdoctoral Fellowship for Research in Japan (Short-term(PE)) PE25723 at the Okinawa Institute of Science and Technology.

\bibliographystyle{alpha}
	\bibliography{MSNrefs26}

\end{document}